\renewcommand{\@seccntformat}[1]{\bf\csname the#1\endcsname.}
\renewcommand{\section}{\@startsection{section}{1}
	\z@{.7\linespacing\@plus\linespacing}{.5\linespacing}
	{\normalfont\upshape\bfseries\centering}}
\renewcommand{\@biblabel}[1]{\@ifnotempty{#1}{#1.}}
\theoremstyle{plain}
\newtheorem{thm}{Theorem}[section]
\newtheorem{lem}[thm]{Lemma}
\newtheorem{prop}[thm]{Proposition}
\theoremstyle{definition}
\newtheorem{ex}[thm]{Example}
\newtheorem{defn}[thm]{Definition}
\tikzstyle{startstop} = [rectangle, rounded corners, minimum width=3cm, minimum height=1cm,text centered, draw=black, fill=red!30]
\tikzstyle{process} = [rectangle, minimum width=3cm, minimum height=1cm, text centered, draw=black, fill=orange!30]
\tikzstyle{decision} = [diamond, minimum width=3cm, minimum height=1cm, text centered, draw=black, fill=green!30]
\tikzstyle{arrow} = [thick,->,>=stealth]
\def\>{\succ}
\def\<{\prec}
\def\b{\beta}
\def\a{\alpha}
\def\l{\lambda}
\def\p{\partial}
\def\m{\mu}
\begin{document}
	\title[Sania Asif \textsuperscript{1}, Zhixiang Wu\textsuperscript{2}]{Cohomology and Homotopification of averaging operators on the Lie conformal algebras}
\author{Sania Asif\textsuperscript{1}, Zhixiang Wu\textsuperscript{2}}
\address{\textsuperscript{1} Institute of Mathematics, Henan Academy of Sciences, Zhengzhou, 450046, P.R. China.}
\address{\textsuperscript{2}Department of Mathematics, Zhejiang University, Hangzhou, Zhejiang Province, 310027, P.R. China} 
\email{\textsuperscript{1}11835037@zju.edu.cn}
\email{\textsuperscript{2}wzx@zju.edu.cn}
    \keywords{ Averaging Lie conformal algebra;  $L_\infty$-conformal algebra; Cohomology; Homotopy; Non abelian extension; automorphisms; Wells map.}
\subjclass[2000]{Primary 17B65, 17B10, 17B69, Secondary 18N40, 18N60.}
\date{\today}
\thanks{This work is funded by the Second batch of the Provincial project of the Henan Academy of Sciences (No. 241819105) and by NNSFC (No.12471038, No.12171129)}
\begin{abstract}
Building upon the work of Pavel in [P. Kolesnikov, Journal of Mathematical Physics, 56, 7 (2015)], we first present the cohomology of averaging operators on the Lie conformal algebras and use it to develop the cohomology of averaging Lie conformal algebras. We then introduce the homotopy version of averaging Lie conformal algebras and establish a connection between $2$-term averaging $\mathfrak{L}_\infty$-conformal algebra with the $3$-cocycle and crossed module of averaging Lie conformal algebra. Next, we study the non-abelian extension of the averaging Lie conformal algebras, showing that they are classified by the second non-abelian cohomology group. Finally, we demonstrate that a pair of automorphisms of averaging Lie conformal algebra is inducible if it can be seen as an image of a suitable Wells map. \end{abstract}\footnote{*The corresponding authors' emails: 11835037@zju.edu.cn ; wzx@zju.edu.cn.}
\maketitle \section{ Introduction}
 Conformal algebra is an interesting topic that was introduced by Kac in \cite{kac}, where the author provided the axiomatic description of operator product expansion of Chiral field theory or CFT rather using Fourier transforms. Conformal algebras later became a crucial tool for studying associative and Lie algebras, that satisfy the locality property. Finite Conformal algebras are useful for studying infinite dimensional algebra structures. A $2$-dimensional vertex operator algebra has connections to Lie conformal algebras, with the latter being a suitable framework for studying infinite-dimensional structures. The structure theory of Lie conformal algebras is developed and studied in \cite{HB, KP, L}. Later on, the study of operators and their cohomologies on conformal algebras are conducted in \cite{AWMB,AWW1,AWY,L,YL}. 
 \par Beyond conformal algebras, another key area of study involves averaging operators, which have a long history, originating in fluid dynamics through Reynolds' work on turbulence theory in 1895 \cite{R}. Later, averaging operators were explored in functional analysis \cite{K,bM,cM}, and gained popularity among algebraists after the work of Cao \cite{C} in 2000, primarily focusing on free commutative averaging algebra structures. Remember that an averaging operator on an Lie algebra $\mathfrak{L}$ is a linear mapping $P: \mathfrak{L}\to \mathfrak{L}$ that satisfies the following identity:
 \begin{align}
     [P(x),P(y)] = P([P(x),y]) = P([x,P(y)]), \quad for ~x,y \in \mathfrak{L}.
 \end{align} The study of averaging operators on Lie and Leibniz algebras, tensor hierarchies, and higher gauge theories is further generalized in \cite{BH,KoS,P,PG}. Lately, the cohomologies of averaging operators on associative and diassociative algebras are explored in \cite{D,WZ}. A Lie algebra along with averaging operator is called an averaging Lie algebra. The cohomology and deformation of averaging Lie algebras are studied using the derived bracket approach in \cite{MDH}.
In this paper we explore various aspects related to averaging operators, i.e., we studied cohomology, homotopy, non-abelian extension, and automorphisms of averaging Lie conformal algebras.
 \par Cohomology is a fundamental concept in algebra, offering insight into the ``holes" or obstructions within the algebraic structures. Consider an algebra $\mathfrak{L}$ and its module $\mathfrak{M}$, the $p$-cochain is given by $C^p(\mathfrak{L},\mathfrak{M})$, then the coboundary operator $d$ maps $C^p(\mathfrak{L},\mathfrak{M})$ to $C^{p+1}(\mathfrak{L},\mathfrak{M})$ and satisfies $d^2=0$. The kernel of $d$ denoted by $Z^p(\mathfrak{L},\mathfrak{M})$ consists of $p$-cocycles
and its range $B^{p+1}(\mathfrak{L},\mathfrak{M})$ comprises $p$-coboundaries, such that we have $B^{p}(\mathfrak{L},\mathfrak{M})\subseteq Z^p(\mathfrak{L},\mathfrak{M})$, then the quotient space $Z^p(\mathfrak{L},\mathfrak{M})/ B^{p}(\mathfrak{L},\mathfrak{M})$ is called the $pth$-cohomology group of $\mathfrak{L}$ with the coefficients in $\mathfrak{M}$, and is denoted by $H^p(\mathfrak{L},\mathfrak{M})$, see \cite{Cones}. Cohomology has a wide range of applications, extending beyond algebra into fields like topology and mathematical physics. There are various types of cohomology complexes. The Chevally Eilenberg cohomology complex is used to study cohomolgy of Lie algebras, the Hochschild cohomology complex is used to study cohomology of associative algebras. Describing cohomology, particularly in higher dimensional algebras can be a challenging task. 
The cohomology of various algebraic structures can be seen in \cite{B,FP,HM}. Lately, many researchers have explored the cohomology and deformation theory of various operator algebras and conformal algebras, see \cite{AWMB,AWW1,AWY,D2022, STZ}. In this paper we study the cohomology of averaging Lie conformal algebras, followed by cohomology of averaging operator on Lie conformal algebra. 
\par Higher algebraic structures can be categorized in two ways: those obtained through the homotopification of algebraic structures and those developed using category theory. In mathematics, the homotopy is used to classify geometric regions by exploring the various paths that can be drawn in a certain region. If two paths with common endpoints and can be continuously deformed into one another by leaving the end points fixed, and staying within their defined region is called homotopic. For the illustration of this fact, consider the Figure \ref{fig:1}, having two parts. In part $A$, the shaded region has a hole in it; $f$ and $g$ are homotopic paths, but $g'$ is not homotopic to $f$ or $g$ since $g'$ cannot be deformed into $f$ or $g$ without passing through the hole and leaving the region. There is no hole in part B, but all  paths have no common end points. Therefore all paths in B are not homotopic to each other.

Homotopy of an algebra was first appeared in the study of topological loop spaces by stasheff in \cite{S}. The homotopy algebra is also known as $\infty$-algebra in literature and plays a crucial role in the conformal field theory and gauge theory. The $\mathfrak{L}_\infty$-algebra is homotopification of Lie algebra, also called strongly homotopy Lie algebra. The $\mathfrak{L}_\infty$-algebra that is a Lie analog of $\mathfrak{A}_\infty$-algebra is introduced by Markel, Lada and stasheff in \cite{LM, LS}. The notion of $2$-term $\mathfrak{L}_\infty$-algebra was given in \cite{BC}, where it was discovered that, $2$-term $\mathfrak{L}_\infty$-algebra give rise to skeletal and strict Lie algebras. In simple words, $2$-term $\mathfrak{L}_\infty$-algebra consists of two terms with sequance of operation given by $l_k$, for $k\leq3$ and $k\in \mathbb{Z}$. The skeletal Lie algebra can be considered a ``simpler" version of the full structure, retaining essential properties while possibly losing some of the higher homotopical characteristics. On the other hand, in a strict Lie algebra, the higher operations $l_k$ for $k\geq3$ and $k\in \mathbb{Z}$ are trivial. This gives a conventional structure that satisfies the Jacobi identity and other classical properties of Lie algebras. Furthermore, \cite{BC} explores that the skeletal $\mathfrak{L}_\infty$-algebra is isomorphic to $3$-cocycle of the cohomology complex of Lie algebras and the strict $\mathfrak{L}_\infty$-algebra is isomorphic to the crossed module of Lie algebras. These results are also valid for $2$-term $\mathfrak{L}_\infty$-algebra. Later on in \cite{SD}, $\mathfrak{L}_\infty$-conformal algebras are proved homotopic to Lie conformal algebras. These results are further extended to Leibniz conformal algebras in \cite{DS}. In the present paper, we defined the $2$-term averaging $\mathfrak{L}_\infty$-conformal algebra and discussed skeletal and strict $2$-term $\mathfrak{L}_\infty $-conformal algebra in detail.
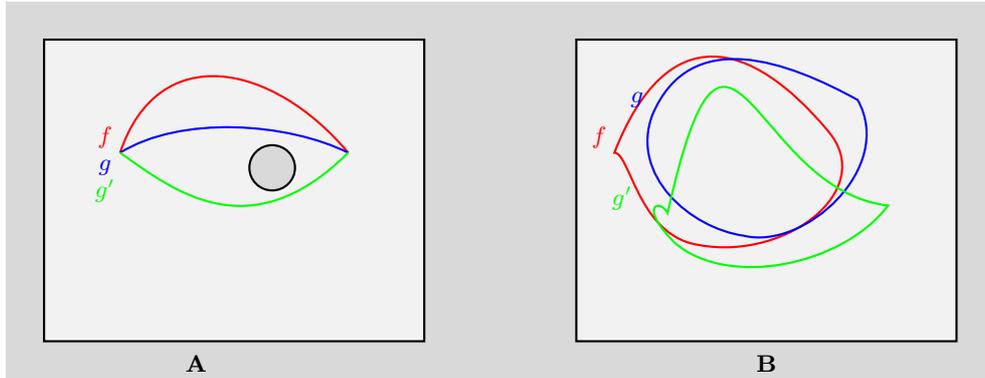
\begin{figure}[H]\begin{tikzpicture}
\fill[gray!30] (-6, -3) rectangle (7, 2);

\fill[gray!10] (-5.5, -2.5) rectangle (-0.5, 1.5); 
\fill[gray!30] (-2.5, -0.2) circle (0.3); 

\draw[thick, black] (-2.5, -0.2) circle (0.3); 

\draw[thick, black] (-5.5, -2.5) rectangle (-0.5, 1.5); 

\coordinate (common) at (-4.5, 0);
\coordinate (end) at (-1.5, 0); 

\draw[thick, red] (common) .. controls (-4, 1.5) and (-2.5, 1.2) .. (end);
\node at (-4.7, 0.2) {\textcolor{red}{$f$}};

\draw[thick, blue] (common) .. controls (-3.7, 0.5) and (-2.3, 0.4) .. (end);
\node at (-4.7, -0.2) {\textcolor{blue}{$g$}};

\draw[thick, green] (common) .. controls (-3.8, -0.5) and (-2.8, -1.3) .. (end);
\node at (-4.7, -0.5) {\textcolor{green}{$g'$}};

\node at (-3.5, -2.8) {\textbf{A}};

\fill[gray!10] (1.5, -2.5) rectangle (6.5, 1.5); 

\draw[thick, black] (1.5, -2.5) rectangle (6.5, 1.5); 

\draw[thick, red] (2, 0) .. controls (2.7, 1.8) and (3.8, 1.5) .. (4.8, 0.3) 
.. controls (5.5, -0.5) and (4.2, -1.5) .. (3, -1.2) 
.. controls (2.3, -1) and (2.2, 0) .. cycle;
\node at (1.8, 0.2) {\textcolor{red}{$f$}};

\draw[thick, blue] (2.5, 0.5) .. controls (3, 1.7) and (4.3, 1.2) .. (5.2, 0.7) 
.. controls (5.7, -0.2) and (4.5, -1.3) .. (3.7, -1.1) 
.. controls (3, -1) and (2.2, -0.3) .. cycle;
\node at (2.3, 0.7) {\textcolor{blue}{$g$}};

\draw[thick, green] (2.7, -0.8) .. controls (3.5, 2.8) and (3.8, -0.5) .. (5.6, -0.7) 
.. controls (5, -1.5) and (3.5, -1.8) .. (2.8, -1.2) 
.. controls (2.3, -0.7) and (2.6, -0.6) .. cycle;
\node at (2.1, -0.6) {\textcolor{green}{$g'$}};

\node at (4, -2.8) {\textbf{B}};

\end{tikzpicture}\caption{Illustration of homotopy in regions with and without a hole. In Part \textbf{A}, the shaded region contains a hole, and paths $f$ and $g$ are homotopic since they can be deformed into each other without leaving the region, whereas $g'$ is not homotopic to either $f$ or $g$ because it cannot be deformed without passing through the hole. In Part \textbf{B}, the region has no hole, but the paths do not share common endpoints, and thus, none of the paths are homotopic to one another.}
\label{fig:1}
\end{figure}

\par In this paper, we focuss on non-abelian extensions of averaging Lie conformal algebras, a topic that builds upon the general extension theory developed by Eilenberg and Maclane for abstract groups \cite{EM} and later generalized by Hochschild for Lie algebras\cite{HS}. Non-abelian extensions, being the most comprehensive type of extension, have seen recent developments in the context of Lie groups, Leibniz algebras, and Rota-Baxter algebras \cite{D,F,GZ,MDH,WZ}. Specifically, we define the second non-abelian cohomology group $H^2_{\text{nab}}(\mathfrak{L}_P, \mathfrak{H}_Q)$  for averaging Lie conformal algebras $\mathfrak{L}_P$ and $\mathfrak{H}_Q $. We further show that all equivalence classes of non-abelian extensions are classified by second non-abelian cohomology group $H^2_{\text{nab}}(\mathfrak{L}_P, \mathfrak{H}_Q)$. In Proposition 5.5, we show that choice of different sections preserves the equivalence relation between two $2$-cocycles.

\par Additionally, we address the inducibility problem of automorphisms in averaging Lie conformal algebras, extending the work of Wells, who introduced the concept in the context of abstract groups \cite{Wells} and further studied in \cite{PSY}. For a given non-abelian extension of averaging Lie conformal algebras $0 \to \mathfrak{H}_Q \to \mathfrak{E}_R \to \mathfrak{L}_P \to 0$, we construct an analogue of the Wells map $W: \text{Aut}(\mathfrak{H}_Q) \times \text{Aut}(\mathfrak{L}_P) \to H^2_{\text{nab}}(\mathfrak{L}_P, \mathfrak{H}_Q)$. Our main result, Theorem \ref{thm6.4}, shows that a pair of automorphisms $(\alpha,\beta) \in \text{Aut}(\mathfrak{H}_Q) \times \text{Aut}(\mathfrak{L}_P)$ is inducible if and only if the Wells map is trivial, indicating that the obstruction to inducibility lies in the second non-abelian cohomology group. Furthermore, in Theorem \ref{thm6.5} we construct the Wells exact sequence, which links various automorphism groups with the cohomology group $H^2_{\text{nab}}(\mathfrak{L}_P, \mathfrak{H}_Q)$.
 
 We can summarize the results of this paper by schematic diagram \ref{fig:2}, where the entity $A \to B$ means that either $A$ implies $B$ or $A$ is subcollection of $B$:
 \begin{figure}[H]
   \resizebox{\textwidth}{!}{ \begin{tikzpicture}[
    node distance=2cm,
    box/.style = {rectangle, draw, minimum height=1cm, minimum width=1.5cm, align=center},
    ->, thick, >=Stealth
    ]
    \node (A) [box] {Averaging $\mathfrak{L}_\infty$-conf. algebras};
    \node (B) [box, below left=of A] {$2$-term averaging \\ $\mathfrak{L}_\infty$-conf. algebras};
    \node (C) [box, below left=of B] {strict $2$-term averaging \\ $\mathfrak{L}_\infty$-conf. algebra};
    \node (D) [box, below=of B] {skeletal $2$-term averaging \\ $\mathfrak{L}_\infty$-conf. algebra};
    \node (E) [box, below=of C] {crossed modules};
    \node (F) [box, below right=of D] {$3$-cocycle};
    \node (H) [box, right=of B] {averaging \\Lie conf. algebras};
       \node (G) [box, below right=of H] {cohomology};
       \node (I) [box, below right=of A] {non-abelian\\ extension of \\
         $2$-term averaging \\ $\mathfrak{L}_\infty$ conf. algebra};
\node (J) [box, below right=of G] {2-cocycles};
\node (K) [box, right=of A] {automorphisms of \\ averaging Lie conf. algerba};
    \draw[->] (A) -- (B);
    \draw[->] (B) -- (C);
    \draw[->] (B) -- (D);
    \draw[<->] (C) -- (E);
    \draw[<->] (D) -- (F);
    \draw[->] (H) -- (G);
    \draw[<->] (B) -- (H); 
    \draw[->] (G) -- (F);
     \draw[->] (A) -- (I);
     \draw[->] (G) -- (J);
      \draw[<->] (I) -- (G);
       \draw[->] (A) -- (K);
        \draw[->] (K) -- (I);
\end{tikzpicture}}
    \caption{Schematic diagram.}
    \label{fig:2}
\end{figure}
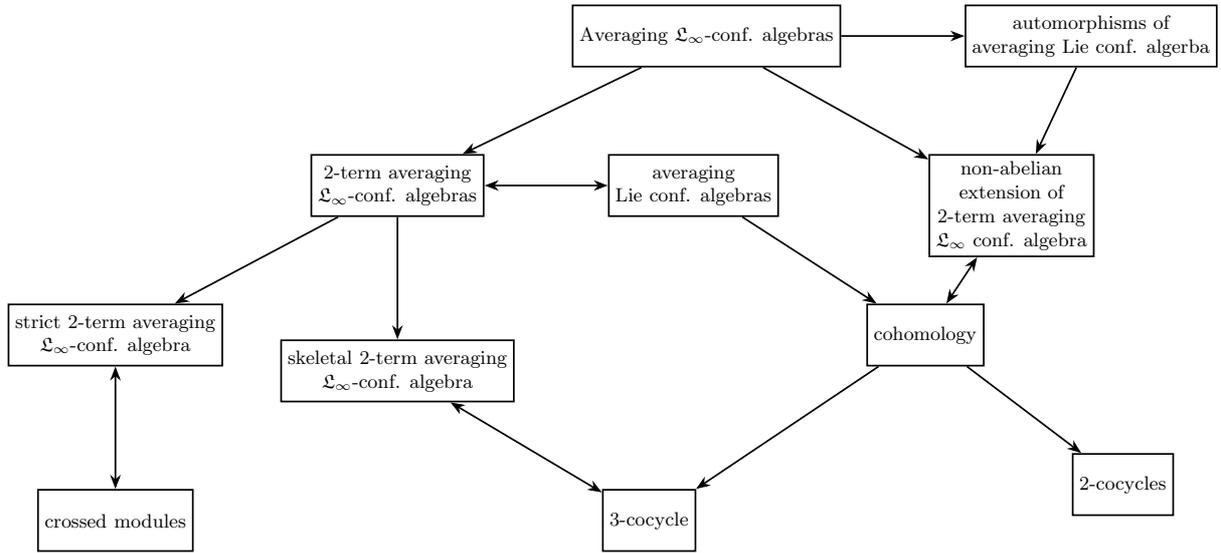
This paper is organized as follows:
In Section $2$, we provide preliminaries of averaging operator on Lie conformal algebras. In Section $3$, we describe the cohomology of averaging Lie conformal algebra. In Section $4$, we describe $2$-terms averaging $\mathfrak{L}_\infty $-conformal algebras and showed its connection with cohomology and crossed module of averaging Lie conformal algebras.  In Section 5, we discuss the non-abelian extension of averaging Lie conformal algebra and show that non-abelian extension can be classified by using second non-abelian cohomology group. Finally in Section $6$, we define automorphism of averaging Lie conformal algebra and discuss 
  the inducibility of the pair of automorphism of averaging Lie conformal algebras in terms of well defined Wells map.
  \section{Preliminaries of averaging operators on Lie conformal algebra}
 In this section, we define averaging Lie conformal algebra structure, followed by the introduction of averaging operator $P$ on Lie conformal algebras $\mathfrak{L}$. We then provide an averaging Lie conformal algebra structure on the direct sum of two Lie conformal algebras. Additionally, we define conformal representation on  $\mathfrak{L}$ and give the notion of relative averaging operator on $\mathfrak{L}$. Finally, we define the crossed module of Lie conformal algebra, which will be used in Section $4$.
 \begin{defn}
   Let $\mathfrak{L}$ be a $\mathbb{C}[\p]$-module, satisfying the axioms of Lie conformal algebra. An averaging operator on $\mathfrak{L}$ is a $\mathbb{C}[\p]$-linear map $P : \mathfrak{L}\to \mathfrak{L}$ that satisfies
$$P([P(x)_\l y]) = [P(x)_\l P(y)],$$
 for all $x,y \in \mathfrak{L}$ and $ \l \in  \mathbb{C}.$
 \end{defn} A Lie conformal algebra along with averaging operator is called averaging Lie conformal algebra, denoted by $\mathfrak{L}_P=(\mathfrak{L},[\cdot_\l \cdot], P)$.
 \begin{defn}
     Given two averaging Lie conformal algebra $\mathfrak{L}_P$ and $\mathfrak{L'}_{P'}$, a homomorphism of averaging Lie conformal algebras is a conformal algebra homomorphism $\psi : (\mathfrak{L},[\cdot_\l \cdot])\to (\mathfrak{L}',[\cdot_\l \cdot]')$ that satisfies $\psi P = P' \psi$.
 \end{defn}
 There is a lot of averaging Lie conformal algebras.
 \begin{ex}
     If $\mathfrak{L}$ is a Lie conformal algebra and direct sum of the $n$-copies of $\mathfrak{L}$ $(\textit{given by }\mathfrak{L}\oplus \mathfrak{L}\oplus \mathfrak{L}\oplus \cdots \oplus \mathfrak{L})$ defines a Lie conformal algebra with the $\l$-bracket :
\begin{align*}
    [(x_1, x_2,\cdots,x_n)_\l&(a_1,\cdots a_n)]\\&:=([{x_1}_\l {a_1}]_\mathfrak{L} , [{x_1}_\l {a_2}]_\mathfrak{L}- [{a_1}_{-\p-\l}x_2]_\mathfrak{L} ,\cdots, \overset{\text{i-th place for $i\geq 2$}}{[{x_1}_\l {a_i}]_\mathfrak{L}- [{a_1}_{-\p-\l}x_i]_\mathfrak{L}},\cdots [{x_1}_\l {a_n}]_\mathfrak{L}- [{a_1}_{-\p-\l}x_n]_\mathfrak{L}).
\end{align*} 
With the above Lie conformal bracket,  $\mathbb{C}[\p]$-module homomorphisms $P, P_i:\mathfrak{L} \oplus \mathfrak{L} \oplus \cdots \oplus \mathfrak{L}\to \mathfrak{L}\oplus \mathfrak{L}\oplus \cdots \oplus \mathfrak{L}$ defined by $P(x_1,x_2,\cdots,x_n)= (\sum_{i=2}^{n} x_i,0,\cdots,0)$ and $P_i(x_1,x_2,\cdots,x_n)= (x_i,0,\cdots,0)$, for $i\geq 2$ are averaging operator on the $nth$-direct sum Lie conformal algebras. \end{ex}  
 \begin{ex}
   The identity map $Id:\mathfrak{L} \to \mathfrak{L}$ is an averaging operator on any Lie conformal algebra $(\mathfrak{L},[\cdot_\l \cdot])$.  \end{ex}
   \begin{prop} Let $\mathfrak{L}$ be a Lie conformal algebra, and $P$ be an averaging operator. Then $\mathfrak{L}_P= (\mathfrak{L},[\cdot_\l\cdot], P)$ is an averaging Lie conformal algebra with a new  Lie bracket given by $[x_\l y]_P=[P(x)_\l y]$ and the same averaging operator $P$, denoted by $(\mathfrak{L}, [\cdot_\l\cdot]_P)$. Moreover, $P$ is a morphism of averaging Lie conformal algebras from  $(\mathfrak{L}, [\cdot_\l\cdot])$ to $ (\mathfrak{L}, [\cdot_\l\cdot]_P)$. 
 \end{prop}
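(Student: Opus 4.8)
The plan is to verify, in order, that the new bracket $[\cdot_\lambda\cdot]_P$ satisfies the three axioms of a Lie conformal algebra (conformal sesquilinearity, skew-symmetry, the Jacobi identity), then that $P$ remains an averaging operator for it, and finally that $P$ intertwines the two structures. Conformal sesquilinearity is immediate from the hypothesis that $P$ is $\mathbb{C}[\partial]$-linear: $[\partial x_\lambda y]_P=[P(\partial x)_\lambda y]=[\partial P(x)_\lambda y]=-\lambda[P(x)_\lambda y]=-\lambda[x_\lambda y]_P$, and symmetrically $[x_\lambda\partial y]_P=(\partial+\lambda)[x_\lambda y]_P$, using only sesquilinearity of the original bracket.

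I would treat the Jacobi identity next, since it is the cleanest step and is exactly where the averaging axiom is used. Expanding the new bracket, the left-hand side is $[x_\lambda[y_\mu z]_P]_P=[P(x)_\lambda[P(y)_\mu z]]$. On the right-hand side the outer term $[[x_\lambda y]_P{}_{\lambda+\mu}z]_P$ expands to $[P([P(x)_\lambda y])_{\lambda+\mu}z]$, and the averaging identity $P([P(x)_\lambda y])=[P(x)_\lambda P(y)]$ collapses it to $[[P(x)_\lambda P(y)]_{\lambda+\mu}z]$; the remaining term is $[P(y)_\mu[P(x)_\lambda z]]$. The Jacobi identity for $[\cdot_\lambda\cdot]_P$ then reduces precisely to the original Jacobi identity applied to the triple $P(x),P(y),z$, and so holds.

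The main obstacle I expect is skew-symmetry. The required identity $[P(x)_\lambda y]=-[P(y)_{-\partial-\lambda}x]$ reduces, via the original skew-symmetry (which gives $-[P(y)_{-\partial-\lambda}x]=[x_\lambda P(y)]$), to the symmetry condition $[P(x)_\lambda y]=[x_\lambda P(y)]$. This is the delicate point of the argument: it asks that $P$ be transferable between the two slots of the bracket, and it is here that the averaging axiom must be exploited most carefully. A natural first step is to derive the companion identity $P([x_\lambda P(y)])=[P(x)_\lambda P(y)]$ from the averaging axiom together with the original skew-symmetry, and then to use it to compare $[P(x)_\lambda y]$ with $[x_\lambda P(y)]$; establishing this symmetry is the crux on which the Lie conformal structure of $[\cdot_\lambda\cdot]_P$ rests.

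The last two claims are then short computations. For the averaging property of $P$ with respect to $[\cdot_\lambda\cdot]_P$, I would compute $[P(x)_\lambda P(y)]_P=[P^2(x)_\lambda P(y)]$ and $P([P(x)_\lambda y]_P)=P([P^2(x)_\lambda y])=[P^2(x)_\lambda P(y)]$, the last equality being the averaging axiom applied to the element $P(x)$; the two sides agree. For the morphism claim, the averaging axiom gives directly $P([x_\lambda y]_P)=P([P(x)_\lambda y])=[P(x)_\lambda P(y)]$, which is exactly the homomorphism condition relating $(\mathfrak{L},[\cdot_\lambda\cdot]_P)$ and $(\mathfrak{L},[\cdot_\lambda\cdot])$; compatibility of $P$ with the two averaging operators is automatic, since both operators are $P$ itself.
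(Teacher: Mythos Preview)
The paper states this proposition without proof, so there is no argument of the authors to compare against. Your checks of conformal sesquilinearity, the Jacobi identity, the averaging property of $P$ for the new bracket, and the homomorphism identity are all clean and correct.

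The gap is exactly where you locate it: skew-symmetry. You reduce the question to $[P(x)_\lambda y]=[x_\lambda P(y)]$ and note that the averaging axiom together with the original skew-symmetry yields the companion identity $P([x_\lambda P(y)])=[P(x)_\lambda P(y)]$. But combining this with $P([P(x)_\lambda y])=[P(x)_\lambda P(y)]$ gives only $P\bigl([P(x)_\lambda y]-[x_\lambda P(y)]\bigr)=0$, and without injectivity of $P$ you cannot cancel $P$. In fact this step cannot be completed in general: the bracket $[x_\lambda y]_P=[P(x)_\lambda y]$ satisfies the (left) Leibniz--Jacobi identity you verified, but not skew-symmetry, so it is a \emph{Leibniz} conformal bracket rather than a Lie conformal bracket. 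Your instinct that this is ``the crux'' is right; the issue is that the proposition as stated appears to need either an extra hypothesis on $P$ or the replacement of ``Lie'' by ``Leibniz''. You should not expect to close this by manipulating the averaging axiom alone.

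One small remark on the last part: your computation $P([x_\lambda y]_P)=P([P(x)_\lambda y])=[P(x)_\lambda P(y)]$ exhibits $P$ as a morphism from $(\mathfrak{L},[\cdot_\lambda\cdot]_P)$ to $(\mathfrak{L},[\cdot_\lambda\cdot])$, which is the reverse of the direction asserted in the proposition; the direction you obtain is the one that follows from the definitions.
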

An associative conformal algebra is called an averaging associative conformal algebra with an averaging operator $P$  if $P$ is $\mathbb{C}[\partial]$-linear endomorphism satisfying $P(P(a)_\lambda b)=P(a)_\lambda P(b)$ for all $a,b\in A$.
\begin{prop}
       Let $(\mathfrak{A},\cdot_\l)$ be an associative conformal algebra with averaging operator $P$. Then its commutator Lie conformal algebra, with $\lambda$-bracket given by
     \begin{align*}
         [x_\l y]= x\cdot_\l y- y\cdot_{-\p-\l} x, \quad for ~x,y\in \mathfrak{A}
     \end{align*}
     is an averaging Lie conformal algebra with the same averaging operator $P$.
   \end{prop}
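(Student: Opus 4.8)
The plan is to check the two defining features in turn: that the commutator bracket $[x_\l y]=x\cdot_\l y-y\cdot_{-\p-\l}x$ makes $\mathfrak{A}$ a Lie conformal algebra, and that the prescribed $P$ then satisfies the averaging axiom $P([P(x)_\l y])=[P(x)_\l P(y)]$ of Definition 2.1. The first is the standard commutator construction: conformal sesquilinearity and the conformal antisymmetry $[x_\l y]=-[y_{-\p-\l}x]$ are immediate from the corresponding properties of $\cdot_\l$, and the conformal Jacobi identity follows from associativity of $\cdot_\l$ by the same bookkeeping that turns an associative algebra into its commutator Lie algebra. I would record this briefly and spend the effort on the averaging property.

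For the averaging property I would expand the left-hand side using $\mathbb{C}[\p]$-linearity of $P$ and then apply the averaging identities of the associative structure. Concretely, the first summand $P(P(x)\cdot_\l y)$ is handled by the displayed identity $P(P(a)_\lambda b)=P(a)_\lambda P(b)$ (with $a=x,\ b=y$), while the second summand $P(y\cdot_{-\p-\l}P(x))$ is handled by its right-handed companion $P(a\cdot_\mu P(b))=P(a)\cdot_\mu P(b)$ (with $a=y,\ b=x,\ \mu=-\p-\l$); reassembling gives
\begin{align*}
P([P(x)_\l y]) &= P\big(P(x)\cdot_\l y\big)-P\big(y\cdot_{-\p-\l}P(x)\big)\\
&= P(x)\cdot_\l P(y)-P(y)\cdot_{-\p-\l}P(x)=[P(x)_\l P(y)],
\end{align*}
which is precisely the averaging axiom for the commutator bracket.

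I expect two points to need care. First, the computation uses not only the left-handed identity displayed in the definition of an averaging associative conformal algebra but also its right-handed counterpart $P(a\cdot_\mu P(b))=P(a)\cdot_\mu P(b)$; together these form the conformal analogue of $P(P(a)b)=P(a)P(b)=P(aP(b))$, and since the associative product carries no antisymmetry the second summand cannot be reduced to the first, so I would make explicit that both halves of the averaging axiom are assumed. Second, the right-handed identity is invoked with the operator value $\mu=-\p-\l$ rather than a scalar; this is legitimate because the averaging relations are polynomial identities in the formal variable $\lambda$ with coefficients in $\mathfrak{A}$, so they persist under substitution of the commuting operator $-\p-\l$ together with the $\mathbb{C}[\p]$-linearity of $P$. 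Justifying this substitution, and confirming that both one-sided averaging identities are in force, is the only genuine obstacle; everything else is formal.
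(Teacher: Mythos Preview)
The paper states this proposition without proof, so there is no argument to compare against; your direct verification is exactly the natural route and is what the authors presumably had in mind. Your expansion
\[
P([P(x)_\l y])=P\big(P(x)\cdot_\l y\big)-P\big(y\cdot_{-\p-\l}P(x)\big)
\]
followed by the two one-sided averaging identities is correct, and your remark about the substitution $\mu\mapsto -\p-\l$ is handled precisely by the $\mathbb{C}[\p]$-linearity of $P$, as you say.

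Your one flagged concern is well taken and is in fact a small gap in the paper rather than in your argument: the sentence preceding the proposition defines an averaging associative conformal algebra only by the left identity $P(P(a)_\l b)=P(a)_\l P(b)$, but the second summand genuinely requires the companion $P(a_\l P(b))=P(a)_\l P(b)$. In the ordinary associative setting the standard definition of an averaging operator includes both halves $P(P(a)b)=P(a)P(b)=P(aP(b))$, and the conformal analogue should as well; without it the proposition does not follow, since the associative product has no antisymmetry to convert one side into the other. So your instinct to make this explicit is correct --- treat the two-sided condition as part of the hypothesis.
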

   Now, let $\mathfrak{L}$ be a Lie conformal algebra and $\mathfrak{M}$ be a $\mathbb{C}[\p]$-module, a $\mathbb{C}$-linear map $\rho: L \to Cend(\mathfrak{M})$ is called a conformal representation of $\mathfrak{L}$, if it satisfies the following equations, for $x, y\in \mathfrak{L}$:
\begin{align}
\rho \partial=& \partial \rho, \\ \rho(\partial(x))_\lambda =& -\lambda \rho(x)_\lambda,\\ \rho([x_{\lambda}y])_{\lambda+\mu} =& \rho(x)_{\lambda}\rho(y)_\mu - \rho(y)_{\mu}\rho(x)_{\lambda}.
\end{align}
We denote the conformal representation by the pair $(\mathfrak{M},\rho)$. If $\mathfrak{M}=\mathfrak{L}$, then $\rho$ is an adjoint representation of $\mathfrak{L}$, defined by $ad(x)_\l y= [x_\l y]$, for all $x, y\in \mathfrak{L}$.
\begin{prop}
Let $(\mathfrak{L}, [\cdot_\l\cdot])$ be a Lie conformal algebra and $(\mathfrak{M}, \rho)$ be its conformal representation, then we can define the relative averaging operator on $\mathfrak{L}$ w.r.t $\mathfrak{M}$ by a $\mathbb{C}$-linear map $T: \mathfrak{M} \to \mathfrak{L}[\l]$ that satisfies
\begin{align}
[{T(m)}_\l T(n)]= T({\rho(T m)}_\l n),\quad \textit{for all } m,n \in \mathfrak{M}.
\end{align}Note that the relative averaging operator is also termed an embedding tensor.
\end{prop}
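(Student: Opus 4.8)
The statement is essentially a definition presented as a proposition, so the plan is to verify that the defining identity is well-posed and that it specializes correctly, thereby justifying both the name ``relative averaging operator'' and its identification with an embedding tensor. First I would make precise the spaces in which the identity lives: extend the $\l$-bracket on $\mathfrak{L}$ and the action $\rho$ of $\mathfrak{M}$ $\mathbb{C}$-bilinearly to the polynomial modules $\mathfrak{L}[\l]$ and $\mathfrak{M}[\l]$, so that both $[T(m)_\l T(n)]$ and $T(\rho(Tm)_\l n)$ become elements of $\mathfrak{L}[\l]$ (more precisely of $\mathfrak{L}[\l,\m]$ once one also tracks the internal variable carried by the values of $T$). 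The sesquilinearity relations $\rho\p=\p\rho$ and $\rho(\p x)_\l=-\l\rho(x)_\l$ guarantee that the $\p$- and $\l$-dependence on the two sides are of the same type, so the identity is a genuine constraint rather than a vacuous one.

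The central step is the specialization that recovers the notion of an averaging operator. Taking $\mathfrak{M}=\mathfrak{L}$ and $\rho=\mathrm{ad}$, where $\mathrm{ad}(x)_\l y=[x_\l y]$, the right-hand side becomes $T(\mathrm{ad}(Tm)_\l n)=T([T(m)_\l n])$, so the defining identity collapses to
\[
[T(x)_\l T(y)] = T([T(x)_\l y]).
\]
Writing $T=P$ this is exactly the averaging-operator identity $P([P(x)_\l y])=[P(x)_\l P(y)]$. Hence an averaging operator is precisely a relative averaging operator with respect to the adjoint representation, which is the consistency that must be recorded.

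Finally, to justify the embedding-tensor terminology I would equip $\mathfrak{M}$ with the descendent bracket $[m_\l n]_T:=\rho(Tm)_\l n$ and check that $(\mathfrak{M},[\cdot_\l\cdot]_T)$ is a Lie conformal algebra for which $T$ intertwines the brackets, i.e. $[T(m)_\l T(n)]=T([m_\l n]_T)$, which is merely a restatement of the defining identity. Sesquilinearity and conformal skew-symmetry of $[\cdot_\l\cdot]_T$ follow directly from the properties of $\rho$, while the one genuine computation is the conformal Jacobi identity for $[\cdot_\l\cdot]_T$, which must be extracted by feeding the defining identity into the representation axiom $\rho([x_\l y])_{\l+\m}=\rho(x)_\l\rho(y)_\m-\rho(y)_\m\rho(x)_\l$. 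I expect this Jacobi verification to be the main obstacle, since it forces one to track carefully the $-\p-\l$ substitutions produced by conformal skew-symmetry and to reconcile the formal variable of the bracket with the internal variable carried by the values of $T$ in $\mathfrak{L}[\l]$.
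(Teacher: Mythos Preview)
The paper does not supply a proof for this proposition: it is purely a definition, stated in proposition form, and the text moves directly to the next result. Your recognition that ``the statement is essentially a definition'' is therefore exactly right, and the first two parts of your proposal---checking well-posedness of the identity in $\mathfrak{L}[\l]$ and observing that the adjoint specialization $\mathfrak{M}=\mathfrak{L}$, $\rho=\mathrm{ad}$ recovers the averaging-operator condition $P([P(x)_\l y])=[P(x)_\l P(y)]$---are appropriate and correct justifications for the terminology.

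There is, however, a genuine error in your third part. You claim that conformal skew-symmetry of the descendent bracket $[m_\l n]_T:=\rho(Tm)_\l n$ ``follows directly from the properties of $\rho$''. It does not. One has $[m_\l n]_T=\rho(Tm)_\l n$ while $-[n_{-\p-\l} m]_T=-\rho(Tn)_{-\p-\l}m$, and there is no reason these should agree: the defining identity only controls $[T(m)_\l T(n)]$, not the symmetry of the action on $\mathfrak{M}$. What the embedding-tensor identity actually yields on $\mathfrak{M}$ is a \emph{Leibniz} conformal algebra structure (the left Leibniz rule follows from the representation axiom $\rho([x_\l y])_{\l+\m}=\rho(x)_\l\rho(y)_\m-\rho(y)_\m\rho(x)_\l$ together with the defining identity), and $T$ becomes a morphism from this Leibniz conformal algebra to the Lie conformal algebra $\mathfrak{L}$. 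This matches the embedding-tensor literature cited in the paper (e.g.\ Sheng--Tang--Zhu), where embedding tensors produce Leibniz, not Lie, structures. So if you wish to keep the third paragraph, replace ``Lie conformal algebra'' by ``Leibniz conformal algebra'' and drop the skew-symmetry claim; the Jacobi-type verification you anticipate is then the left Leibniz identity, which goes through without the $-\p-\l$ difficulties you were worried about.
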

Furthermore, we can also define the $\l$-bracket on the semi-direct sum of Lie conformal algebra $\mathfrak{L}$ and conformal module $\mathfrak{M}$ by \begin{align}\label{eqrep2}
    {[{(x+m)}_\l (y+n)]}_\ltimes=([x_\l y], \rho(x)_\l m-\rho(y)_{-\p-\l}n),\quad \forall (x, m),(y,n)\in \mathfrak{L}\oplus \mathfrak{M}.
\end{align}In the next proposition, we show that there is a close relationship between the relative averaging operator and averaging operators on a semi-direct sum Lie conformal algebra.
\begin{prop}An operator $T: \mathfrak{M}\to \mathfrak{L}[\l]$ is a relative averaging operator on the Lie conformal algebra $\mathfrak{L}$, iff $P_T$ is an averaging operator on the semi-direct sum Lie conformal algebra $\mathfrak{L}\oplus \mathfrak{M}$, defined by $P_T(x, m)= (T(m), 0)$. \end{prop}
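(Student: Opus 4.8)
The plan is to prove both directions by direct computation, checking that the defining identity of a relative averaging operator (equation with $[T(m)_\l T(n)] = T(\rho(Tm)_\l n)$) is equivalent to the averaging-operator identity $P_T([P_T(u)_\l v]) = [P_T(u)_\l P_T(v)]$ on the semidirect sum, where $u = (x,m)$ and $v = (y,n)$ range over $\mathfrak{L} \oplus \mathfrak{M}$. First I would record that $P_T$ is manifestly $\mathbb{C}[\p]$-linear, since $T$ is $\mathbb{C}[\p]$-linear and the projection/inclusion maps are too; this dispenses with the linearity axiom and reduces the statement to the single bracket identity.

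The core of the argument is to expand both sides of the averaging identity using the semidirect bracket \eqref{eqrep2}. I would compute $P_T(u) = (T(m),0)$ and $P_T(v) = (T(n),0)$, so that the right-hand side becomes
\begin{align*}
[P_T(u)_\l P_T(v)]_\ltimes = [(T(m),0)_\l (T(n),0)]_\ltimes = ([T(m)_\l T(n)],\, 0),
\end{align*}
where the second component vanishes because both module entries are zero and $\rho(\cdot)_\l 0 = 0$. For the left-hand side, I would first compute the inner bracket $[P_T(u)_\l v]_\ltimes = [(T(m),0)_\l (y,n)]_\ltimes = ([T(m)_\l y],\, \rho(T(m))_\l n)$, and then apply $P_T$, which kills the first component and sends the module component into $\mathfrak{L}$ via $T$:
\begin{align*}
P_T([P_T(u)_\l v]_\ltimes) = P_T\bigl([T(m)_\l y],\, \rho(T(m))_\l n\bigr) = \bigl(T(\rho(T(m))_\l n),\, 0\bigr).
\end{align*}
Comparing the two expressions, the averaging identity for $P_T$ holds for all $u,v$ if and only if $[T(m)_\l T(n)] = T(\rho(Tm)_\l n)$ holds for all $m,n \in \mathfrak{M}$, which is exactly the relative averaging operator condition. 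The equivalence is then immediate in both directions.

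The main point requiring care—rather than a genuine obstacle—is bookkeeping the $\mathbb{C}[\p]$-module structure and the $\l$-dependence: since $T : \mathfrak{M} \to \mathfrak{L}[\l]$ has target involving a formal variable, I would confirm that $P_T$ is well-defined as a map on $\mathfrak{L}\oplus\mathfrak{M}$ (or its appropriate $[\l]$-extension) and that the bracket \eqref{eqrep2} interacts correctly with the $\l$-twisting $\rho(y)_{-\p-\l}$ in the second component when $u$ has a nonzero module part. Because $P_T$ always outputs elements with vanishing module component, every term involving $\rho(y)_{-\p-\l}n$ coming from the \emph{second} argument of an outer bracket is automatically annihilated, which is precisely why only the relative averaging condition survives; I would make this vanishing explicit to justify that no additional constraints arise. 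The remaining verification is routine substitution.
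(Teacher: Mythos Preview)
Your proof is correct. The paper itself states this proposition without proof, so there is no argument in the paper to compare against; the direct computation you outline---expanding both sides of the averaging identity for $P_T$ using the semidirect bracket and reducing to the relative averaging condition $[T(m)_\l T(n)] = T(\rho(Tm)_\l n)$---is the standard and essentially only way to verify this, and you carry it out cleanly.

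One small remark: the semidirect bracket as printed in the paper's equation \eqref{eqrep2} appears to contain a typo (the module components $m,n$ are swapped relative to the skew-symmetric formula); your computation implicitly uses the correct form $[(x,m)_\l (y,n)]_\ltimes = ([x_\l y],\, \rho(x)_\l n - \rho(y)_{-\p-\l} m)$, which is what makes the second component of $[P_T(u)_\l v]_\ltimes$ come out as $\rho(T(m))_\l n$. You may want to note this discrepancy explicitly if you present the argument.
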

\begin{defn}
A representation of an averaging Lie conformal algebra $\mathfrak{L}_P$ consists of the triple $(\mathfrak{M},\rho,\phi)$, where $(\mathfrak{M},\rho)$ is a  conformal representation of the Lie conformal algebra and $\phi\in Cend(\mathfrak{M})$ is a $\mathbb{C}[\partial]$-linear map, such that following identity holds: 
\begin{align}\label{eqrep}
\rho(Px)_\l \phi(m)=\phi(\rho(Px)_\l m)=\phi(\rho(x)_\l \phi(m)),
\end{align} for all $x\in \mathfrak{L}$, $m \in \mathfrak{M}$.
We denote the representation of $\mathfrak{L}_P$ by $\mathfrak{M}_\phi$. 
\end{defn}
An averaging Lie conformal algebra $\mathfrak{L}_P$ is  a representation of itself with $\phi=P$.
 Further, we can define the averaging operator structure on $\mathfrak{L}_P\oplus \mathfrak{M}_\phi$.
   \begin{prop}\label{propnew} Either $P_1(x+m)=x$, or $P_2(x+m)=\phi(m)$, or $P_3(x+m)=P(x)+\phi(m)$ is an averaging operator of the semi-direct sum Lie conformal algebra $\mathfrak{L}_P\oplus \mathfrak{M}_\phi$.\end{prop}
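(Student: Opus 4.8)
The plan is to verify the averaging-operator identity $P_i(P_i(X)_\lambda Y) = P_i(X)_\lambda P_i(Y)$ directly for each of the three candidates $P_1, P_2, P_3$ on the semi-direct sum $\mathfrak{L}_P \oplus \mathfrak{M}_\phi$, using the bracket \eqref{eqrep2}. Throughout I write $X = x+m$ and $Y = y+n$ with $x,y \in \mathfrak{L}$ and $m,n \in \mathfrak{M}$, and I recall that the semi-direct bracket is $[X_\lambda Y]_\ltimes = ([x_\lambda y], \rho(x)_\lambda m - \rho(y)_{-\partial-\lambda} n)$. Each $P_i$ is manifestly $\mathbb{C}[\partial]$-linear (being built from $P$, $\phi$, and projections, all of which are $\mathbb{C}[\partial]$-linear), so the only content is the averaging identity.

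\begin{proof}
Write $X=x+m$, $Y=y+n$ with $x,y\in\mathfrak{L}$ and $m,n\in\mathfrak{M}$. Each of $P_1,P_2,P_3$ is $\mathbb{C}[\partial]$-linear since $P$, $\phi$, and the projections to $\mathfrak{L}$ and $\mathfrak{M}$ are. It remains to check the averaging identity $P_i([P_i(X)_\lambda Y]_\ltimes)=[P_i(X)_\lambda P_i(Y)]_\ltimes$ for each $i$.

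For $P_1(X)=x$: using \eqref{eqrep2} we have $[P_1(X)_\lambda Y]_\ltimes=[x_\lambda(y+n)]_\ltimes=([x_\lambda y],\,\rho(x)_\lambda n)$, so $P_1([P_1(X)_\lambda Y]_\ltimes)=[x_\lambda y]$. On the other side, $[P_1(X)_\lambda P_1(Y)]_\ltimes=[x_\lambda y]_\ltimes=([x_\lambda y],0)$, whose $\mathfrak{L}$-component is $[x_\lambda y]$. The two agree.

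For $P_2(X)=\phi(m)$: here $P_2(X)\in\mathfrak{M}$, and $[\phi(m)_\lambda Y]_\ltimes=(0,\,-\rho(y)_{-\partial-\lambda}\phi(m))$ since the $\mathfrak{L}$-component $\phi(m)$ vanishes as an element of $\mathfrak{L}$; applying $P_2$ gives $\phi(-\rho(y)_{-\partial-\lambda}\phi(m))$. Comparing with $[P_2(X)_\lambda P_2(Y)]_\ltimes$ and reducing via \eqref{eqrep}, the defining identity of the representation $\mathfrak{M}_\phi$, is exactly what forces the two sides to coincide; this is the step where the representation axiom \eqref{eqrep} is essential rather than incidental.

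For $P_3(X)=P(x)+\phi(m)$: expanding $[P_3(X)_\lambda Y]_\ltimes=([P(x)_\lambda y],\,\rho(P(x))_\lambda n-\rho(y)_{-\partial-\lambda}\phi(m))$ and applying $P_3$, then comparing with $[(P(x)+\phi(m))_\lambda(P(y)+\phi(n))]_\ltimes$, the $\mathfrak{L}$-component reduces by the averaging identity for $P$ on $\mathfrak{L}$, namely $P([P(x)_\lambda y])=[P(x)_\lambda P(y)]$, and the $\mathfrak{M}$-component reduces by \eqref{eqrep}. Matching both components completes the verification.
\end{proof}

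I expect the $P_2$ and $P_3$ cases to be the genuine obstacle: they are precisely where the compatibility axiom \eqref{eqrep} between $\rho$, $\phi$, and $P$ must be invoked, and one must be careful that the shifted variable $-\partial-\lambda$ in the second slot of the bracket interacts correctly with \eqref{eqrep} (which is stated for the first slot); the case $P_1$ is by contrast almost immediate.
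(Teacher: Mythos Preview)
Your approach is exactly the one the paper takes: the paper's entire proof is the single line ``The proof is straightforward and can be followed by assuming Eqs.~\eqref{eqrep2} and \eqref{eqrep},'' and your direct case-by-case verification using the semi-direct bracket \eqref{eqrep2} together with the representation axiom \eqref{eqrep} is precisely that. Your identification of the $P_2$ and $P_3$ cases as the ones genuinely requiring \eqref{eqrep}, and of $P_1$ as essentially immediate, is also on target.
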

\begin{proof}
    The proof is straight forward and can be followed by assuming Eqs. \eqref{eqrep2} and \eqref{eqrep}.
    \end{proof}
  \begin{ex}
     If $\mathfrak{L}_P$ is an averaging Lie conformal algebra and $\mathfrak{M}$ is a tensor product of the $n$-copies of $\mathfrak{L}_P$. Then $\mathfrak{M}_\phi$ becomes a representation of $\mathfrak{L}_P$ with 
\begin{align*}
    x_\lambda(a_1\otimes \cdots\otimes  a_n):=
    \sum\limits_{i=1}^n( {a_1}\otimes \cdots a_{i-1}\otimes x_\lambda a_i\otimes a_{i+1}\otimes\cdots\otimes a_n)
\end{align*} 
and $$\phi(a_1\otimes \cdots\otimes  a_n):= \sum\limits_{i=1}^n {a_1}\otimes \cdots a_{i-1}\otimes P( a_i)\otimes a_{i+1}\otimes\cdots\otimes a_n.$$
Then one can obtain averaging Lie conformal algebras  $\mathfrak{L}_P\oplus \mathfrak{M}_\phi$. \end{ex}  
\begin{defn}A differential crossed module of a Lie conformal algebra consists of a pair of Lie conformal algebras $\mathfrak{L}_0$ and  $\mathfrak{L}_1$, equipped with a Lie conformal algebra homomorphism: 
\begin{align*}
    d:& \mathfrak{L}_1 \to \mathfrak{L}_0
\end{align*} and a conformal $\mathbb{C}$-linear map  
$\rho: \mathfrak{L}_0\otimes \mathfrak{L}_1 \to  \mathfrak{L}_1[\l] $ defined by $(x,v) \mapsto \rho(x)_\l(v)$ that makes $\mathfrak{L}_1$ as a conformal representation of $\mathfrak{L}_0$, satisfying the following properties 
\begin{align*}
d(\rho(x)_\l(v)) &= [x_\l dv]_{\mathfrak{L}_0}\\
\rho(d(u))_\l (v) &= [u_\l v]_{\mathfrak{L}_1}
\end{align*}for all $x \in \mathfrak{L}_0$ and  $u,v \in \mathfrak{L}_1$.
A differential crossed module is equivalent to a dg-Lie conformal algebra structure on a chain complex $(\mathfrak{L}_1 \xrightarrow{d} \mathfrak{L}_0)$ concentrated in degrees $0$ and $1$. The components of the dg-Lie conformal bracket are:
\begin{itemize}
    \item the Lie conformal bracket $[\cdot_\l \cdot] : \mathfrak{L}_0 \otimes \mathfrak{L}_0 \to \mathfrak{L}_0 [\l]$,
    \item the mixed Lie conformal bracket $[\cdot_\l \cdot] : \mathfrak{L}_0 \otimes \mathfrak{L}_1 \to \mathfrak{L}_1[\l]$, which corresponds to the action: $[x_\l v] = \rho(x)_\l(v)$, for all $x\in  \mathfrak{L}_0$ and $v\in \mathfrak{L}_1.$
\end{itemize}
\end{defn}
The notion of differential crossed module provides a way to characterize the structure of a strict Lie $2$- conformal algebra in terms of two ordinary Lie conformal algebras.
\section{Cohomology of averaging Lie conformal algebra}
To express the cohomology of the averaging Lie conformal algebra, we begin by defining the cohomology of averaging operator on a Lie conformal algebra and provide some recap of the cohomology of a Lie conformal algebra. This section is divided into three subsections; the first subsection is devoted to studying the cohomology complex for  Lie conformal algebras. In the second subsection, we define the cohomology complex for averaging operators on the Lie conformal algebras. Finally, we provide the cohomology complex of averaging Lie conformal algebras.
\subsection{ The cohomology complex for a  Lie conformal algebra with representation:}	
 Let $(\mathfrak{L}, [\cdot_\l\cdot] )$ be a  Lie conformal algebra with a representation $(\rho, \mathfrak{M} )$ on a $\mathbb{C}[\p]$-module $\mathfrak{M}.$ Assume that $C^p(\mathfrak{L}, \mathfrak{M})$ is a space of $p$-cochains that is equipped with the $\mathbb{C}$-linear maps $$f_{\l_1, \l_2, \cdots, \l_{p-1}}: \mathfrak{L}^{\otimes p}\to \mathbb{C}[\l_1,\l_2,\cdots,\l_{p-1}]\otimes \mathfrak{M},$$ defined by $$x_1\otimes x_2\otimes\cdots \otimes x_p \mapsto f_{\l_1,\l_2,\cdots,\l_{p-1}}(x_1,x_2,\cdots,x_p).$$ These maps satisfy the following conditions of\\ \begin{enumerate}
\item Conformal sesqui-linearity:
	\begin{equation}\label{eq9}
	\begin{aligned}
	f_{\l_1, \l_2,\cdots, \l_{p-1}}&(x_1,x_2,\cdots,\p(x_i),\cdots,x_p)\\&=\begin{cases}-\l_{i} f_{\l_1,\l_2,\cdots,\l_{p-1}}(x_1, x_2, \cdots , x_p), &i= 1, \cdots, p-1, \\
	 (\p+\l_1+\l_2+\cdots+\l_{p-1}) f_{\l_1,\l_2,\cdots,\l_{p-1}}(x_1, x_2, \cdots , x_p),&i=p. \end{cases}
	\end{aligned}\end{equation}
	and 
     \item Conformal skew-symmetry:
	\begin{equation}\label{eq11}
	\begin{aligned}
	f_{\l_1,\l_2,\cdots,\l_{p-1}}(x_1,x_2,\cdots,x_p)=(-1)^\tau f_{\l_{\tau(1)},\l_{\tau(2)},\cdots,\l_{\tau(p-1)}}(x_{\tau(1)}, x_{\tau(2)}, \cdots , x_{\tau(p)})|_{\l_{p}\mapsto \l_{p}^\dagger},
	\end{aligned}\end{equation}
	where $\l_{p}^\dagger=-\sum_{i=1}^{p-1}\l_{i}-\p^{\mathfrak{M}}$, for all $x_1, x_2, \cdots, x_p\in \mathfrak{L}$. Moreover, the above equation shows that the $ \mathbb{C}$-linear map $f$ is skew-symmetric concerning simultaneous permutation of $ x_{j} $'s and $\l_j$'s.
 \end{enumerate}
The linear coboundary map $\delta : C^p (\mathfrak{L}, \mathfrak{M} ) \to C^{p+1} (\mathfrak{L},\mathfrak{M})$ is given by : 
\begin{equation}\label{coboundarymap}\begin{aligned}(\delta f)_{\l_1,\cdots,\l_{p}}(x_1,\cdots, x_{p+1})=& \sum_{i=1}^{p+1} (-1)^{i+1}\rho((x_i))_{\l_i}f_{\l_1,\cdots,\hat{\l_{i}},\cdots, \l_{p}}(x_1,\cdots,\hat{x_i},\cdots, x_{p+1})\\&+ \sum_{i<j}(-1)^{i+j} f_{\l_i+ \l_j ,\l_1,\cdots, \hat{\l_{i}}, \cdots, \hat{\l_{j}}, \cdots, \l_{p}}([{x_{i}}_{\l_i} x_j], x_1, \cdots, \hat{x_i},\cdots, \hat{x_j},\cdots, x_{p+1}), \end{aligned}\end{equation} 
for $x_1, x_2, \cdots, x_{p+1} \in \mathfrak{L}$, where  $\hat{x_i}$ shows the omission of the entry $x_{i}$  (see \cite{AWMB, ZYC} for more detail).\\
 Consider the space $C^*(\mathfrak{L},\mathfrak{M}) :=\bigoplus_{p\geq1}C^p(\mathfrak{L}, \mathfrak{M})$ is the graded space of $p$-cochains. Assuming that $f\in C^p(\mathfrak{L}, \mathfrak{M})$, $\delta (f)\in C^{p+1}(\mathfrak{L},\mathfrak{M})$, and $ \delta^2 (f)=0$, implies that $(C^* (\mathfrak{L}, \mathfrak{M}), \delta )$ is a cochain complex for the  Lie conformal algebra $(\mathfrak{L}, [\cdot_\l\cdot] )$ with coefficients in the representation $(\rho,\mathfrak{M})$. We denote the cohomology space associated to the cochain complex $(C^* (\mathfrak{L}, \mathfrak{M}), \delta )$ by $\mathcal H^*(\mathfrak{L}, \mathfrak{M})$.
\subsubsection{ The cohomology complex for a Lie conformal algebra in terms of Maure-Cartan element:} Observe that, if $\mathfrak{M} = \mathfrak{L}$ and the action $\rho: \mathfrak{L}\otimes \mathfrak{M} \to \mathfrak{M}[\l]$ is given by the underlying  Lie conformal bracket. In this case, the coboundary map in Eq. \eqref{coboundarymap} is given be \begin{equation}\label{coboundarymap2}\begin{aligned}(\delta  f)_{\l_1,\cdots,\l_{p}}&(x_1,\cdots, x_{p+1})\\=& \sum_{i=1}^{p+1} (-1)^{i+1} [  {x_i}  _{\l_i}f_{\l_1,\cdots,\hat{\l_{i}},\cdots, \l_{p}}(x_1,\cdots,\hat{x_i},\cdots, x_{p+1})]\\&+ \sum_{i<j}(-1)^{i+j} f_{\l_i+ \l_j ,\l_1,\cdots, \hat{\l_{i}}, \cdots, \hat{\l_{j}}, \cdots, \l_{p}}([{x_{i}}_{\l_i} x_j],  x_1 , \cdots, \hat{x_i},\cdots, \hat{x_j},\cdots, x_{p+1}). \end{aligned}\end{equation}The cochain complex thus obtained is  denoted by  $(C^* (\mathfrak{L},\mathfrak{L}),\delta ).$ And this complex is the same as the cochain complex of the  Lie conformal algebra $(\mathfrak{L}, [\cdot_\l\cdot])$ (defined in \cite{SK}). The cohomology of the cochain complex $(C^* (\mathfrak{L}, \mathfrak{L}), \delta )$ is denoted by $\mathcal H^* (\mathfrak{L}, \mathfrak{L})$.
 \par Let us now consider the graded vector space $C^*(\mathfrak{L},\mathfrak{L})=\bigoplus_{p\in \mathbb{Z}}C^p (\mathfrak{L},\mathfrak{L})$, where $C^p(\mathfrak{L},\mathfrak{L})$ is the space of $\mathbb{C}$-linear maps satisfying Eqs. \eqref{eq9}-\eqref{eq11}. From \cite{AWY}, we know that there is  a circle product $f\circledcirc g: C^{p} (\mathfrak{L}, \mathfrak{L})\otimes C^{q} (\mathfrak{L}, \mathfrak{L})\to C^{p+q-1} (\mathfrak{L}, \mathfrak{L})$  given by 
\begin{equation}\label{circleproduct}
		\begin{aligned}
		(f\circledcirc g)&_{\l_1,\cdots,\l_{p+q-2}}(x_1,\cdots,x_{p+q-1})=\sum_{\tau\in S_{q,p-1}} (-1)^{|\tau|} f_{\l_{\tau(1)}+\cdots+\l_{\tau({q})},\l_{\tau({q+1})},\cdots,\l_{\tau({p+q-2})}}\\& (g_{\l_{\tau(1)},\cdots,\l_{\tau({q-1})}}(x_{\tau(1)},\cdots, x_{\tau(q)}), x_{\tau(q+1)},\cdots,  x_{\tau(p+q-1)}).
		\end{aligned}
		\end{equation} 
		for $x_1,\cdots ,x_{p+q-1}\in \mathfrak{L}$, where $S_{q,p-1}$ denotes $\{1,2,\cdots,p+q-1\}$-shuffle in the permutation group of $S_{q+p-1}$ and $|\tau|$ denotes the sign of the permutation $\tau$.
	 	Now by using Eq. \eqref{circleproduct}, we can define a Nijenhuis-Richardson bracket (also called as $NR$-bracket) of degree $-1$ on graded vector space $C^* (\mathfrak{L}, \mathfrak{L})$ as follows:\begin{align}
	 	    [f,g]_{NR} :=f \circledcirc g -(-1)^{(p-1)(q-1)}g\circledcirc f.
	 	\end{align}
  The graded vector space $ C^{*}(\mathfrak{L}, \mathfrak{L}) $ along with the Nijenhuis-Richardson bracket $[\cdot, \cdot]_{NR}$ forms a differential graded Lie algebra $(C ^{*}(\mathfrak{L}, \mathfrak{L}), [\cdot, \cdot]_{NR})$. This differential graded Lie algebra, abbreviated as $deLa$, 
	is also called controlling Lie algebra. The differential operator on the graded Lie algebra $(C^{*}(\mathfrak{L}, \mathfrak{L}),[\cdot,\cdot]_{NR})$ can be defined in terms of Maure-Cartan element. Maurer-Cartan element, denoted by $\eta_c$. It is basically an element from the space of $2$-cochain $C^2(\mathfrak{L}, \mathfrak{L})$ satisfying the following condition:
  $[\eta_c,\eta_c]_{NR}=0$. Furthermore, for any $g\in C^q(\mathfrak{L}, \mathfrak{L})$ the differential of the  $dgLA$ is given by $d_{\eta_c}:=[\eta_c,-]_{NR}$, 
for $\eta_c\in C^2(\mathfrak{L},\mathfrak{L})$. 
\begin{align*}
	d_{\eta_c}(g)=& [\eta_c,g]_{NR}\\=& {(\eta_c\circledcirc g -(-1)^{(1)(q-1)}g\circledcirc \eta_c)} _{\l_1,\cdots,\l_q}(x_1,\cdots,x_{q+1})
	\\=& (\eta_c \circledcirc g)_{\l_1,\l_2,\cdots,\l_q}  (x_1,\cdots,x_{q+1})+(-1)^{q}(g\circledcirc \eta_c)_{\l_1,\l_2,\cdots,\l_q} (x_1,\cdots,x_{q+1})
	\\=& \sum_{\tau\in S_{q,1}} (-1)^{|\tau|} {\eta_c}_{\l_{\tau(1)}+\cdots+\l_{\tau({q})}} (g_{\l_{\tau(1)},\cdots,\l_{\tau({q-1})}}(x_{\tau(1)},\cdots, x_{\tau(q)}),  x_{\tau(q+1)}) \\&
	+(-1)^{q} \sum_{\tau\in S_{2,q-1 }} (-1)^{|\tau|} g_{\l_{\tau(1)}+\l_{\tau({2})},\l_{\tau({3})},\cdots,\l_{\tau({q})}} ({\eta_c}_{\l_{\tau(1)}}(x_{\tau(1)}, x_{\tau(2)}), x_{\tau(3)},\cdots,  x_{\tau(q+1)}) 
	\\=& \sum_{\tau\in S_{q,1}} (-1)^{|\tau|}  [{g_{\l_{\tau(1)},\cdots,\l_{\tau({q-1})}}(x_{\tau(1)},\cdots, x_{\tau(q)})}_{\l_{\tau(1)}+\cdots+\l_{\tau({q})}}  x_{\tau(q+1)}]\\&
+(-1)^{q} \sum_{\tau\in S_{2,q-1}} (-1)^{|\tau|} g_{\l_{\tau(1)}+\l_{\tau({2})},\l_{\tau({3})},\cdots,\l_{\tau({q})}} ([{x_{\tau(1)}}_{\l_{\tau(1)}} x_{\tau(2)}], x_{\tau(3)}, \cdots, x_{\tau(q+1)}).
\end{align*}
 Since it satisfies $d_{\eta_c}^2=0$, the triple $(C^{*}(\mathfrak{L}, \mathfrak{L}),[\cdot,\cdot]_{NR}, d_{\eta_c})$ forms a differential graded Lie algebra. %
 \subsection{ The cohomology complex for averaging operators on a  Lie conformal algebra with representation:}
 Let $\mathfrak{L}_P=(\mathfrak{L},[\cdot_\l \cdot],P)$ be an averaging Lie conformal algebra with representation $(\mathfrak{M}_\phi,\rho_P)$. The cochain of $\mathfrak{L}_P$ with the coefficients in $(\mathfrak{M}_\phi,\rho_P)$ is given by 
 $C^{*}_{AO}(\mathfrak{L}, \mathfrak{M})=\oplus_{p\geq 1}C^{p}_{AO}(\mathfrak{L}, \mathfrak{M})$, where 
 $$C^{p}_{AO}(\mathfrak{L},\mathfrak{M}):=\{f\in  Hom(\mathfrak{L}^{\otimes p},\mathfrak{M})|f\text{is conformal skew-symmetric}\}.$$
 We can define the coboundary operator on this cochain by $\partial_{AO}:C^{p}_{AO}(\mathfrak{L}, \mathfrak{M})\to C^{p+1}_{AO}(\mathfrak{L}, \mathfrak{M})$, given by:\begin{equation}\begin{aligned}&(\partial_{AO} f)_{\l_1,\cdots,\l_{p}}(x_1,\cdots, x_{p+1})\\=& \sum_{i=1}^{p+1} (-1)^{i+1}{\rho_P(x_i)}_{\l_i}f_{\l_1,\cdots,\hat{\l_{i}},\cdots, \l_{p}}(x_1,\cdots,\hat{x_i},\cdots, x_{p+1})\\&
+ \sum_{1\leq i<j\leq p+1}(-1)^{i+j} f_{\l_i+ \l_j ,\l_1,\cdots, \hat{\l_{i}},\cdots, \hat{\l_{j}},\cdots, \l_{p}}({[{x_{i}}_{\l_i} x_j]}_P,  x_1 , \cdots, \hat{ x_i},\cdots, \hat{x_j},\cdots,x_{p+1})
\\=& \sum_{i=1}^{p+1} (-1)^{i+1}{\rho( Px_i)}_{\l_i}f_{\l_1,\cdots,\hat{\l_{i}},\cdots, \l_{p}}(x_1,\cdots,\hat{x_i},\cdots, x_{p+1})\\&
+ \sum_{1\leq i<j\leq p+1}(-1)^{i+j} f_{\l_i+ \l_j ,\l_1,\cdots, \hat{\l_{i}},\cdots, \hat{\l_{j}},\cdots, \l_{p}}({[{P(x_{i})}_{\l_i} x_j]}, x_1, \cdots, \hat{x_i},\cdots, \hat{x_j},\cdots,(x_{p+1})),
  \end{aligned}\end{equation} where $f\in C^{p}_{AO}(\mathfrak{L}, \mathfrak{M})$ and $x_1, x_2,\cdots, x_{n+1}\in \mathfrak{L}$.
It can be check that $\partial_{AO}^p\circ \partial_{AO}^{p+1}=0$, that is, it is indeed a coboundary operator for the induced Lie conformal algebra $(\mathfrak{L}, [\cdot_\l \cdot]_P)$. Therefore, $ (C^{*}_{AO}(\mathfrak{L}, \mathfrak{M}),\partial_{AO})$ is a cochain complex of the averaging Lie conformal algebra $\mathfrak{L}_P$, with the coefficients in the representation $(\mathfrak{M}_\phi, \rho_P)$, Cohomology of this cochain complex is denoted by $H^{*}_{AO}(\mathfrak{L}_P, \mathfrak{M}_\phi )$. If $(\mathfrak{M}_\phi, \rho_P)=(\mathfrak{L}, [\cdot_\l \cdot]_P)$, then corresponding cohomology is denoted by $ H^{*}_{AO}(\mathfrak{L})$.

To compare the complex $C^*_{AO}(\mathfrak{L},\mathfrak{M}),\partial_{AO})$ with the complex $C^*(C(\mathfrak{L},\mathfrak{M}),\partial),$ we define a chain map as follow.

\begin{defn}Let $(\mathfrak{M}_\phi, \rho)$ be a representation of an averaging Lie conformal algebra $(\mathfrak{L}, [\cdot_\l \cdot]_P)$. Define a map $\xi^p: C^{p}(\mathfrak{L}, \mathfrak{M} )\to C^{p}_{AO}(\mathfrak{L}, \mathfrak{M} )$ by 
    \begin{align}
        \xi^pf_{\l_1,\l_2,\cdots,\l_{p-1}}(x_1,x_2,\cdots,x_p) = f_{\l_1,\l_2,\cdots,\l_{p-1}}(Px_1,Px_2,\cdots,Px_p)- (\phi \circ f)_{\l_1,\l_2,\cdots,\l_{p-1}}(Px_1,x_2,\cdots,x_p),
    \end{align} for all $x_1,\cdots,x_p\in L$ and $f\in  C^{p}(\mathfrak{L}, \mathfrak{M} ).$ 
\end{defn}
\begin{lem}\label{lemma3.2}For any $x_1,\cdots,x_{p+1}\in \mathfrak{L}$ and $f\in  C^{p}(\mathfrak{L}, \mathfrak{M} )$, we have
   \begin{align}
        \xi^{p+1}(\delta^p(f))_{\l_1,\l_2,\cdots,\l_{p}}(x_1,x_2,\cdots,x_{p+1}) = \partial^p
(\xi^{p}(f))_{\l_1,\l_2,\cdots,\l_{p}}(x_1,x_2,\cdots,x_{p+1}).
   \end{align} 
\end{lem}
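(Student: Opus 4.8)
The plan is to verify this chain-map identity by expanding both sides with the explicit coboundary formula \eqref{coboundarymap} and the definition of $\xi$, and then matching terms using the averaging identity $[P(x)_\lambda P(y)] = P([P(x)_\lambda y])$ together with the representation compatibility \eqref{eqrep}. Recall that on the target complex the operator $\partial^p$ is $\partial_{AO}^p$, built from the twisted action $\rho_P(x) = \rho(Px)$ and the twisted bracket $[x_\lambda y]_P = [Px_\lambda y]$, whereas $\xi^p f$ evaluates $f$ with $P$ applied to all of its arguments and subtracts the $\phi$-twisted term $(\phi\circ f)(Px_1, x_2, \dots, x_p)$, in which only the first slot carries a $P$ and $\phi$ is applied on the outside.

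First I would expand $\xi^{p+1}(\delta^p f)$. Its leading summand is $(\delta^p f)_{\lambda_1,\dots,\lambda_p}(Px_1,\dots,Px_{p+1})$; by \eqref{coboundarymap} this produces $\rho$-terms of the form $\rho(Px_i)_{\lambda_i} f(Px_1,\dots,\widehat{Px_i},\dots,Px_{p+1})$ and bracket terms carrying $[Px_i {}_{\lambda_i} Px_j]$ into $f$, while the subtracted summand contributes the corresponding $\phi$-twisted pieces. Next I would expand $\partial^p(\xi^p f)$: each $\rho_P$-term is $\rho(Px_i)_{\lambda_i}$ acting on $\xi^p f$ with the $i$-th entry omitted, and each bracket term feeds the twisted bracket $[Px_i {}_{\lambda_i} x_j]$ into the first slot of $\xi^p f$; since $\xi^p$ applies $P$ to every surviving argument, the bracket slot becomes $P[Px_i {}_{\lambda_i} x_j]$ and the remaining entries all carry a $P$, with the $\phi$-twisted part of $\xi^p$ supplying the matching twisted contributions.

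The comparison then rests on two inputs. For the bracket terms, the averaging identity gives $[Px_i {}_{\lambda_i} Px_j] = P[Px_i {}_{\lambda_i} x_j]$, so the bracket appearing on the left after the full $P$-substitution is exactly the $P$-image produced by the outer $P$ of $\xi^p$ on the right; the signs and the shuffle/index bookkeeping are then identical to the classical Lie-conformal chain-map computation. For the $\rho$-terms and all $\phi$-twisted summands, the compatibility $\rho(Px)_\lambda \phi(m) = \phi(\rho(Px)_\lambda m) = \phi(\rho(x)_\lambda \phi(m))$ lets me slide $\phi$ through the action so that the twisted pieces on the two sides agree.

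The main obstacle I anticipate is the bookkeeping of the $\phi$-twisted terms, where the asymmetry of $\xi$ (the distinguished first argument) interacts with the asymmetry of the coboundary (the special role of the index $i=1$ and of the bracket slot). Concretely, one must track exactly which argument carries a ``doubled'' $P$ and confirm that each twisted term produced on one side is cancelled or reproduced on the other after a single use of \eqref{eqrep}; verifying that no residual $\phi$-terms survive is where the genuine content of the lemma lies, the rest being a sign-matching exercise parallel to the untwisted case.
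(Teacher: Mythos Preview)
Your approach is essentially identical to the paper's: both expand $\xi^{p+1}(\delta^p f)$ and $\partial^p(\xi^p f)$ directly from the coboundary formula and the definition of $\xi$, and then match terms using the averaging identity $[Px_{\lambda}Py]=P[Px_{\lambda}y]$ for the bracket contributions and the compatibility \eqref{eqrep} for the $\phi$-twisted pieces. The paper in fact glosses over exactly the bookkeeping you flag as the main obstacle, concluding with a bare ``By using Eq.~\eqref{eqrep}, we see that\dots'', so your outline is at least as detailed as the original.
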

\begin{proof}
    First, consider that
\begin{align*}
    \xi^{p+1}(\delta^p(f))_{\l_1,\l_2,\cdots,\l_{p}}&(x_1, x_2, x_3, \ldots , x_{p+1}) \\=&
(\delta^pf)_{\l_1,\l_2,\cdots,\l_{p}}(P x_1, P x_2, \ldots, P x_p, P x_{p+1}) \\&- (\phi \circ (\delta^pf))_{\l_1,\l_2,\cdots,\l_{p}}(Px_1,  x_2, \ldots,  x_p,  x_{p+1}) \\=& \sum_{i=1}^{p+1}(-1)^{i+1} [(P x_i)_{\l_i}f_{\l_2,\cdots,\hat{\l_{i}},\cdots, \l_{p}}(P x_1,\cdots\hat{P x_i},\cdots, P x_{p+1})]\\& 
	 + \sum_{1\leq i<j\leq p+1}(-1)^{i+j} f_{\l_i+ \l_j ,\l_1,\cdots, \hat{\l_{i}},\cdots, \hat{\l_{j}},\cdots, \l_{p}}([{P x_{i}}_{\l_i} P x_j], P x_1, \cdots, \hat{P x_i},\cdots, \hat{P x_j},\cdots,P x_{p+1})
   \\&- \phi([(P x_1)_{\l_1}f_{\l_2,\cdots,\hat{\l_{1}},\cdots, \l_{p}}( x_2,\cdots,x_{p+1})]\\& 
	 + \sum_{1<j\leq p+1}(-1)^{1+j} f_{\l_1+ \l_j ,\l_2,\cdots,  \hat{\l_{j}},\cdots, \l_{p}}([{P x_{1}}_{\l_1}  x_j], x_2,\cdots, \hat{x_j},\cdots,x_{p+1}))
   \\&- \phi(\sum_{i=2}^{p+1}(-1)^{i+1} [ {x_i}_{\l_i}f_{\l_2,\cdots,\hat{\l_{i}},\cdots, \l_{p+1}}(P x_1,x_2,\cdots\hat{x_i},\cdots,x_{p+1})]\\& 
	 + \sum_{2\leq i<j\leq p+1}(-1)^{i+j} f_{\l_i+ \l_j ,\l_1,\cdots, \hat{\l_{1}},\cdots, \hat{\l_{j}},\cdots, \l_{p}}([{x_{i}}_{\l_i}  x_j], P x_1,x_2, \cdots, \hat{x_i},\cdots, \hat{x_j},\cdots,x_{p+1})).
\end{align*} Next
\begin{align*}
\partial_{AO}^p(\xi^p(f))_{\l_1,\cdots, \l_{p}}&(x_1, x_2, x_3, \ldots , x_{p+1})\\
	=&\sum_{i=1}^{p+1}(-1)^{i+1} \rho(P(x_i))_{\l_i}(\xi^pf)_{\l_1,\cdots,\hat{\l_{i}},\cdots, \l_{p}}(x_1,\cdots,\hat{x_i},\cdots, x_{p+1})\\&
 + \sum_{1\leq i<j\leq p+1}(-1)^{i+j} (\xi^p f)_{\l_i+ \l_j ,\l_1,\cdots, \hat{\l_{i}},\cdots, \hat{\l_{j}},\cdots, \l_{p}}([{P(x_i)}_{\l_i}x_j], x_1, \cdots, \hat{x_i},\cdots, \hat{x_j},\cdots,x_{p+1})\\&=\sum_{i=1}^{p+1}(-1)^{i+1} \rho(P(x_i))_{\l_i} f_{\l_1,\l_2,\cdots,\hat{\l_i},\cdots,\l_{p}}(Px_1,Px_2,\cdots,\hat{Px_i},\cdots,Px_{p+1})\\&
 - \sum_{i=1}^{p+1}(-1)^{i+1} \rho(P(x_i))_{\l_i}(\phi \circ f)_{\l_1,\l_2,\cdots,\hat{\l_i},\cdots,\l_{p}}(Px_1,x_2,\cdots,\hat{x_i},\cdots,x_{p+1})
 \\& + \sum_{1\leq i<j\leq p+1}(-1)^{i+j} f_{\l_i+ \l_j ,\l_1,\cdots, \hat{\l_{i}},\cdots, \hat{\l_{j}},\cdots, \l_{p}}(P[{P(x_i)}_{\l_i}x_j] ,Px_1,Px_2,\cdots\hat{Px_i},\cdots\hat{Px_j},\cdots,Px_p)\\&- \sum_{1\leq i<j\leq p+1}(-1)^{i+j}(\phi \circ f)_{\l_i+\l_j,\l_1,\l_2,\cdots,\hat{\l_i},\cdots\hat{\l_j},\cdots,\l_{p}}(P[{P(x_i)}_{\l_i}x_j],x_1,x_2,\cdots\hat{x_i},\cdots\hat{x_j},\cdots,x_p).
\end{align*} 
By using Eq.\eqref{eqrep}, we see that \begin{align}
        \xi^{p+1}(\delta^p(f))_{\l_1,\l_2,\cdots,\l_{p}}(x_1,x_2,\cdots,x_{p+1}) = \partial^p
(\xi^{p}(f))_{\l_1,\l_2,\cdots,\l_{p}}(x_1,x_2,\cdots,x_{p+1}).
   \end{align} By now we complete the proof.
\end{proof} The Lemma \ref{lemma3.2} yields the following commutative diagram,
$$\begin{tikzcd}
\cdots \arrow{r} & C^{p-1}(\mathfrak{L}, \mathfrak{M}) \arrow{r}{\delta^{p-1}} \arrow{d}[swap]{\xi^{p-1}} & C^p(\mathfrak{L}, \mathfrak{M}) \arrow{r}{\delta^p} \arrow{d}[swap]{\xi^p} & C^{p+1}(\mathfrak{L}, \mathfrak{M}) \arrow{r}{\delta^{p+1}} \arrow{d}[swap]{\xi^{p+1}} & C^{p+2}(\mathfrak{L}, \mathfrak{M}) \arrow{r} \arrow{d}{\xi^{p+2}} & \cdots \\
\cdots \arrow{r} & C^{p-1}_{AO}(\mathfrak{L}, \mathfrak{M}) \arrow{r}{\partial^{p-1}} & C^p_{AO}(\mathfrak{L}, \mathfrak{M}) \arrow{r}{\partial^p} & C^{p+1}_{AO}(\mathfrak{L}, \mathfrak{M}) \arrow{r}{\partial^{p+1}} & C^{p+2}_{AO}(\mathfrak{L}, \mathfrak{M}) \arrow{r} & \cdots.
\end{tikzcd}$$
\subsection{Cohomology complex of averaging Lie conformal algebra}
   After presenting the cohomology of Lie conformal algebra and averaging operators, our next task is to provide the cohomology of averaging Lie conformal algebra $\mathfrak{L}_P$ with representation $\mathfrak{M}_\phi$. To acheive this, we combine the cochain complex of Lie conformal algebra and the cochain complex of averaging operator to define the cochain complex of averaging Lie conformal algebra. Now, we define the cochain groups by $C^0_{AL}(\mathfrak{L}, \mathfrak{M}) = C^0(\mathfrak{L}, \mathfrak{M})$ and $C^p_{AL}(\mathfrak{L}, \mathfrak{M}) = C^p(\mathfrak{L},\mathfrak{M}) \oplus C^{p-1}_{AO}(\mathfrak{L},\mathfrak{M})$, $ \forall~ p \geq 1$, and the coboundary map $d^p_{AL}: C^p_{AL}(\mathfrak{L},\mathfrak{M}) \to C^{p+1}_{AL}(\mathfrak{L},\mathfrak{M})$ given by
\begin{align*}
    d^p_{AL}(f,g) = (\delta^p(f),- \xi^p(f)-\partial^{p-1}(g)),
\end{align*}
for any $f \in C^p(\mathfrak{L},\mathfrak{M})$ and $g \in C^{p-1}_{AO}(\mathfrak{L},\mathfrak{M})$.
\begin{thm}
The map $d^p_{AL} : C^p_{AL}(\mathfrak{L},\mathfrak{M}) \to C^{p+1}_{AL}(\mathfrak{L},\mathfrak{M})$ satisfies $d^{p+1}_{AL} \circ d^p_{AL} = 0$.
\end{thm}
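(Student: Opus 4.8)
The plan is to exploit the additive two-component structure of $d_{AL}$ and reduce the claim to three facts already in hand: the identity $\delta^2=0$ for the ordinary Lie conformal cochain complex, the identity $\partial_{AO}^2=0$ for the averaging-operator complex, and the chain-map property of $\xi$ supplied by Lemma \ref{lemma3.2}. First I would take an arbitrary pair $(f,g)\in C^p(\mathfrak{L},\mathfrak{M})\oplus C^{p-1}_{AO}(\mathfrak{L},\mathfrak{M})$ and record
\begin{align*}
d^p_{AL}(f,g)=\bigl(\delta^p f,\ -\xi^p f-\partial^{p-1}g\bigr),
\end{align*}
then apply $d^{p+1}_{AL}$ to this pair, writing the output as an element of $C^{p+2}(\mathfrak{L},\mathfrak{M})\oplus C^p_{AO}(\mathfrak{L},\mathfrak{M})$ and examining the two coordinates separately.

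The first coordinate of $d^{p+1}_{AL}\circ d^p_{AL}(f,g)$ is $\delta^{p+1}(\delta^p f)$, which vanishes since $\delta$ is the coboundary of the Lie conformal cochain complex and hence squares to zero. The second coordinate expands, upon substituting $F=\delta^p f$ and $G=-\xi^p f-\partial^{p-1}g$ into the formula $-\xi^{p+1}F-\partial^p G$, as
\begin{align*}
-\xi^{p+1}(\delta^p f)-\partial^p\bigl(-\xi^p f-\partial^{p-1}g\bigr)
=\bigl(\partial^p\xi^p f-\xi^{p+1}\delta^p f\bigr)+\partial^p\partial^{p-1}g.
\end{align*}
The parenthesized difference is exactly zero by Lemma \ref{lemma3.2}, which asserts $\xi^{p+1}\circ\delta^p=\partial^p\circ\xi^p$, while the remaining term $\partial^p\partial^{p-1}g$ vanishes because $\partial_{AO}$ squares to zero. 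Thus both coordinates are zero and $d^{p+1}_{AL}\circ d^p_{AL}=0$.

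Since Lemma \ref{lemma3.2} already carries the only genuinely computational content, namely the commutation of $\xi$ with the two coboundaries, the remaining argument is essentially bookkeeping and I do not expect a serious obstacle. The one point that demands care is the sign: the minus sign built into the second component of $d_{AL}$ is precisely what turns $-\xi^{p+1}\delta^p+\partial^p\xi^p$ into a cancelling combination rather than a sum, so I would expand $\partial^p(-\xi^p f-\partial^{p-1}g)$ explicitly and track the signs there. It is also worth checking the low-degree cases $p=0,1$ separately, where the $C^{p-1}_{AO}$ summand is absent or collapses to $C^0$, to confirm that the formula still returns zero.
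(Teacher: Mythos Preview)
Your proposal is correct and follows essentially the same route as the paper's proof: both apply $d^{p+1}_{AL}$ to $d^p_{AL}(f,g)$, kill the first component via $\delta^2=0$, and handle the second component by combining $\partial_{AO}^2=0$ with the chain-map identity from Lemma~\ref{lemma3.2}. Your exposition is in fact a bit more careful about the sign bookkeeping than the paper's terse computation.
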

\begin{proof}
    Let $f \in C^p(\mathfrak{L},\mathfrak{M})$ and $g \in C^{p-1}_{AO}(\mathfrak{L},\mathfrak{M})$, then we have
\begin{align*}
    d^{p+1}_{AL} \circ d^p_{AL}(f,g) &= d^{p+1}_{AL}(\delta^p(f),-\partial^{p-1}(g) - \xi^p(f)) \\&= (\delta^{p+1}(\delta^p(f)),-\partial^p(-\partial^{p-1}(g) - \xi^p(f)) - \xi^{p+1}(\delta^p(f)))\\&= (0, \partial^p(\xi^p(f)) - \xi^{p+1}(\delta^p(f))) \\&= 0.
\end{align*}\end{proof}
Thus, $\{C^p_{AL}(\mathfrak{L}_P,\mathfrak{M}_\phi),d^p_{AL}\}$ is the cochain complex of averaging Lie conformal algebras. Its cohomology class is denoted by $H^*_{AL}(\mathfrak{L}_P,\mathfrak{M}_\phi)$.
All the cochain complexes we studied above can be expressed in the form of the following short exact sequence
\begin{align}
    0 \longrightarrow C^p_{\text{AO}}(\mathfrak{L},\mathfrak{M}) \longrightarrow C^p_{\text{AL}}(\mathfrak{L},\mathfrak{M}) \longrightarrow C^p (\mathfrak{L},\mathfrak{M}) \longrightarrow 0.
\end{align}

   \section{$2$-terms homotopy averaging Lie conformal algebra}
  The homotopy version of conformal algebras or the conformal version of homotopy algebra is a recent topic of study. Lately, only a few researches have been carried out in this direction. For instance, the homotopy theory of associative conformal algebras is explored in \cite{SD}. This concept for $\mathfrak{L}_{\infty}$ algebras is introduced in \cite{KS} and the homotopy theory of Leibniz conformal algebra has been studied in \cite{DS}. Motivated by these studies, this section is devoted to introducing homotopy averaging operators on $2$-term $\mathfrak{L}_\infty$ conformal algebras. A $2$-term $\mathfrak{L}_\infty$-conformal algebra equipped with a homotopy averaging operator is called a $2$-term averaging $\mathfrak{L}_\infty$-conformal algebra. We focus on ``skeletal" and ``strict" $2$-term averaging $\mathfrak{L}_\infty$-conformal algebras. In particular, we show that skeletal $2$-term averaging $\mathfrak{L}_\infty$-conformal algebras correspond to $3$-cocycles of averaging Lie conformal algebras. Next, we introduce crossed modules of averaging Lie conformal algebras and show that crossed modules of averaging Lie conformal algebras correspond to strict $2$-term averaging $\mathfrak{L}_\infty$-conformal algebras.
\begin{defn}
   An $\mathfrak{L}_\infty$-conformal algebras are graded $\mathbb{C}[\p]$-modules $\mathfrak{L}=\oplus_i\mathfrak{L}_i$ where $i\in  \mathbb{Z}$, equipped with the collection of graded $C$-linear maps $\{l_k:\mathfrak{L}^{\otimes k}\to \mathfrak{L}[\l_1,\l_2,\cdots, \l_{k-1}]\}$ with degree $ k-2$ for any $k\in \mathbb{N}$, satisfying the following set of equations in the sense that
   \begin{enumerate}
       \item $l_k$ is \textbf{conformal sesquilinear}, i.e.,
       \begin{equation}\label{eq10}
	\begin{aligned} {l_k}_{\l_1, \l_2,\cdots, \l_{k-1}}&(x_1,x_2,\cdots,\p(x_i),\cdots,x_k)\\&=\begin{cases}
	-\l_{i} {l_{k}}_{\l_1,\l_2,\cdots,\l_{k-1}}(x_1, x_2, \cdots , x_k),&i= 1, \cdots, k-1, \\
	 (\p+\l_1+\l_2+\cdots+\l_{k-1}) {l_k}_{\l_1,\l_2,\cdots,\l_{k-1}}(x_1, x_2, \cdots , x_k),&i=k. \end{cases}
	\end{aligned}\end{equation}
       \item $l_k$ is \textbf{conformal skew-symmetric}, i.e.,
       \begin{align*}
{l_k}_{\l_1+\l_2+\cdots+\l_{k-1}}(x_1,\cdots x_i, x_{i+1}, \cdots, x_k)= sgn(\sigma) \epsilon(\sigma){l_k}_{\l_{\sigma(1)},\cdots,\l_{\sigma(k-1)}}(x_{\sigma(1)},\cdots,x_{\sigma(i+1)},x_{\sigma(i)},\cdots,x_{\sigma(k)})|_{\l_k \mapsto \l_k^\dagger}\end{align*}
	where $\l_{k}^{\dagger}= -\sum_{i=1}^{k-1}\l_{i}-\p$, for all $\sigma\in \mathbb{S}_k$.
 \item \textbf{Higher conformal Jacobi identity}, i.e., for any $k\in N$ and homogeneous element $x_1, x_2, \cdots x_n \in \mathfrak{L}$
 \begin{align*}
     \sum_{i+j=k+1}\sum_{\sigma\in S_k}sgn(\sigma)\epsilon(\sigma)(-1)^{j(i-1)}{l_p}_{\l_{\sigma(1)}+\l_{\sigma(2)}+\cdots+\l_{\sigma(q)},\l_{\sigma(q+1)},\cdots,\l_{\sigma(k)}}({l_q}_{\l_{\sigma(1)},\cdots,\l_{\sigma(q-1)}}&(x_{\sigma(1)},\cdots, x_{\sigma(q)}),\\& x_{\sigma(q+1)}, \cdots, x_{\sigma(k)}).
 \end{align*}Where $\sigma\in Sh(q,n-q)$ means that either $\sigma(q)=n$ or $\sigma(n)=n$. It have effect on $\l_{\sigma}^{\dagger}$, i.e., if $\sigma(q)=n$, then we use the notation $\l_{\sigma}^{\dagger}=\l_{\sigma(1)}+\cdots+\l_{\sigma(q)}$ and when  $\sigma(n)=n$, then we use the notation $\l_{\sigma}^{\dagger}=-\p-\l_{\sigma(q+1)}-\cdots-\l_{\sigma(k)}.$ 
   \end{enumerate}
\end{defn} Note that, Lie conformal algebra and differential graded Lie conformal algebra are the particular case of the $\mathfrak{L}_\infty$-conformal algebra. However, we are also familiar with the $2$-term $\mathfrak{L}_{\infty}$-algebra, defined in \cite{ADSS} a particular case of $\mathfrak{L}_\infty$-conformal algebra. In the following definition, we define $2$-term $\mathfrak{L}_\infty$-conformal algebra: 
\begin{defn}
 A $2$-term $\mathfrak{L}_\infty$-conformal algebra is a triple consisting of 
   \begin{enumerate}\item[(i)] a 
 chain complex of $\mathbb{C}[\p]$-modules $d:\mathfrak{L}_1\to \mathfrak{L}_0$,
   \item[(ii)] a conformal sesqui-linear and skew-symmetric $\mathbb{C}$-bilinear map $[[\cdot_\l \cdot]]: \mathfrak{L}_i\otimes \mathfrak{L}_j\to \mathfrak{L}_{i+j}[[\l]]$, for $i,j,i+j\in [0, 1]$, and
     \item[(iii)] a conformal sesqui-linear skew-symmetric trilinear map $l_3:\mathfrak{L}_0\otimes \mathfrak{L}_0\otimes \mathfrak{L}_0\to \mathfrak{L}_{1}[[\l,\m]]$,
   \end{enumerate}
that satisfy the following set of identities :
\begin{enumerate}
    \item[(L1)] $[[m_\l n]]=0$,
    \item[(L2)] $[[x_\l m]]=-[[m_{-\p-\l}x]]$,
   \item[(L3)] $[[x_\l y]]=-[[y_{-\p-\l}x]]$,
\item [(L4)]$d ([[x_\l m]])=  [[x_\l dm]]$   ,
\item [(L5)]$ [[dm_\l n]]= [[m_\l dn]]$  ,
    \item [(L6)] $d((l_3)_{\l,\m}(x,y,z))= ([[x_\l ([[y_\m z]])]]-[[([[x_\l y]])_{\l+\m},z]]- [[y_\m ([[x_\l z]])]]$,
    \item [(L7)]$(l_3)_{\l,\m}(x,y,d m)= [[x_\l ([[y_\m m]])]] - [[([[x_{\l}y]])_{\l+\m}m]] - [[y_\m ([[x_\l m]])]]$,
    \item[(L8)] \begin{align*}
       & [[x_\l {l_3}_{\m,\nu}(y,z,w)]] - [[y_\m {l_3}_{\l,\nu}(x,z, w)]] +  [[z_{\nu} {l_3}_{\l,\m}(x,y, w)]]- [[w_{-\p-\l-\m-\nu} {l_3}(x,y, z)]] \\&= {l_3}_{\l+\m,\nu}([[x_\l y]] ,z,w) +{l_3}_{\m,\l+\nu}(y, [[x_\l z]] ,w)+ {l_3}_{\m,\nu}(y,z, [[x_\l w]]) \\& +{l_3}_{\l,\m+\nu}(x, [[y_\m z]], w) - {l_3}_{\l,\nu}(x,z, [[y_\m w]])+ {l_3}_{\l,\m}(x,y ,[[z_\nu w]]),
    \end{align*} for all $x,y,z,w\in \mathfrak{L}_0$ and $m,n\in \mathfrak{L}_1$.\end{enumerate}\end{defn} It follows from the above definition that $2$-term $\mathfrak{L}_\infty$-conformal algebra is nothing but an $\mathfrak{L}_\infty$-conformal algebra, whose underlying graded $\mathbb{C}[\p]-$module is concentrated in degree $0$ and $1$, i.e., $\mathfrak{L}=\mathfrak{L}_0+\mathfrak{L}_1$. This implies that $l_k=0$ for $k\geq 4$.\\
    In the following, we define the conformal morphisms of the two $2$-term $\mathfrak{L}_\infty$-conformal algebras.
    \begin{defn}
        Let $\mathfrak{L} =(d:\mathfrak{L}_1\to \mathfrak{L}_0,[[\cdot_\l \cdot]], l_3)$ and $\mathfrak{L}' =(d':\mathfrak{L}_1'\to \mathfrak{L}'_0,[[\cdot_\l \cdot]]', l'_3)$ are two $2$-term $\mathfrak{L}_\infty$-conformal algebras. A homomorphism of these conformal algebras is given by a triple $(f_0,f_1,f_2)$, where $f_0: \mathfrak{L}_0\to \mathfrak{L}'_0$, $f_1: \mathfrak{L}_1\to \mathfrak{L}'_1$ are  $\mathbb{C}[\p]$-linear maps,  and  $f_2: \mathfrak{L}_0\otimes \mathfrak{L}_0\to \mathfrak{L}_1'[\l]$ is a conformal sesqui-linear skew-symmetric map such that following equations hold for all $x, y, z \in \mathfrak{L}_0$ and $m \in \mathfrak{L}_1$:
        \begin{enumerate}
        \item[(H1)] $f_{0} d = d' f_1,$ 
         \item[(H2)] $d'({f_2}_\l(x,y)) =  [[{f_0(x)}_\l f_0(y) ]] - f_0([[x_\l y]])$,
         \item[(H3)] ${f_2}_\l(x, dm) =- f_1([[x_\l m ]]) + [[f_0(x)_\l f_1(m)]],$  
      \item[(H4)] ${f_2}_\l(dm,x) =- f_1([[m_\l x ]]) + [[{f_1(m)}_\l f_0(x)]] ,$
            \item[(H5)]  \begin{align*}{l_3}_{\l,\m}(f_0(x), f_0(y), f_0(z)) - f_1 {l_3}_{\l,\m}(x, y, z)&= [[{f_0(x)}_\l {f_2}_\m(y, z)]] + {f_2}_\l  (x, ([[{y}_\m z]]))\\&  - [[{f_0(y)}_\m {{f_2}_\l(x, z)}]] -{f_2}_\m (y, [[{f_0(x)}_\l z]])\\& - [[{{f_2}_\l (x,y)}_{\l+\m } f_0 (z)]]- {f_2}_{\l+\m}([[{f_0(x)}_\l y]] , z). \end{align*}
        \end{enumerate}
    \end{defn}
    Further suppose that $\mathfrak{L} =(d:\mathfrak{L}_1\to \mathfrak{L}_0,[[\cdot_\l \cdot]], l_3)$  be $2$-term $\mathfrak{L}_\infty$-conformal algebra, then identity homomorphism $Id_\mathfrak{L}:\mathfrak{L}\to \mathfrak{L}$ is given by a triple $Id_\mathfrak{L}= (Id_{\mathfrak{L}_0}, Id_{\mathfrak{L}_1},0)$, It corresponds the following identities:
    \begin{itemize}
        \item $f_{0} d = d f_1,$
        \item $0 =  [[{f_0(x)}_\l f_0(y) ]] - f_0([[x_\l y]]),$
         \item $0 =- f_1([[x_\l m]]) + [[f_0(x)_\l f_1(m)]],$  
      \item $0 =- f_1([[m_\l x ]]) + [[{f_1(m)}_\l f_0(x)]], $
            \item  ${l_3}_{\l,\m}(f_0(x), f_0(y), f_0(z)) - f_1 {l_3}_{\l,\m}(x, y, z) = 0 .$
    \end{itemize}
\begin{defn}
    Let $\mathfrak{L} =(d:\mathfrak{L}_1\to \mathfrak{L}_0,[[\cdot_\l \cdot]], l_3)$ be a $2$-term $\mathfrak{L}_\infty$-conformal algebra. A homotopy averaging operator on $\mathfrak{L}$ is a triple $P =(P_0, P_1, P_2),$  where $P_0 : \mathfrak{L}_{0} \to \mathfrak{L}_0$ and  $P_1 : \mathfrak{L}_{1} \to \mathfrak{L}_1$ are linear maps and $P_2 : \mathfrak{L}_0\times \mathfrak{L}_0 \to \mathfrak{L}_1[\l]$ is a skew-symmetric bilinear map  such that for all $x, y, z \in \mathfrak{L}_0$ and $m\in \mathfrak{L}_1$,
  \begin{itemize}
      \item [(A1)]  $ P_{0} d = d P_1,$ 
      \item [(A2)] $d({P_2}_\l(x,y)) =  [[{P_0(x)}_\l P_0(y) ]] - P_0([[P_0(x)_\l y]])$, 
      \item [(A3)] ${P_2}_\l(x, dm) =- P_1([[P_0(x)_\l m ]]) + [[P_0(x)_\l P_1(m)]] = -P_1([[x_\l P_1(m) ]]) +[[{P_0}(x)_\l P_1(m)]] ,$  
      \item [(A4)] ${P_2}_\l(dm,x) =- P_1([[m_\l P_0(x) ]]) + [[{P_1(m)}_\l P_0(x)]]=- P_1([[{P_1(m)}_\l x ]]) + [[{P_1(m)}_\l P_0(x)]] ,$
     \item [(A5)] \begin{align*}&{l_3}_{\l,\m}(P_0(x), P_0(y), P_0(z)) - P_1 {l_3}_{\l,\m}(P_0(x), y, z)\\&= [[{P_0(x)}_\l {P_2}_\m(y, z)]] - [[{{P_2}_\l (x,y)}_{\l+\m } P_0 (z)]] - [[{P_0(y)}_\m {{P_2}_\l(x, z)}]] \\&+ {P_2}_\l  (x, ([[{P_0(y)}_\m z]])) - {P_2}_{\l+\m}([[{P_0(x)}_\l y]] , z) -{P_2}_\m (y, [[{P_0(x)}_\l z]]).\end{align*} 
  \end{itemize}
\end{defn}A $2$-term averaging $\mathfrak{L}_\infty$-conformal algebra is a $2$-term $\mathfrak{L}_\infty$-conformal algebra $\mathfrak{L} =(d:\mathfrak{L}_1\to \mathfrak{L}_0,[[\cdot_\l \cdot]], l_3)$ equipped with a homotopy averaging operator $P =(P_0, P_1, P_2)$ on it. We denote a $2$-term averaging $\mathfrak{L}_\infty$-conformal algebra as above by $\mathfrak{L}_P = (d: \mathfrak{L}_1\to \mathfrak{L}_0, [[\cdot_\l \cdot]], l_3, P_0, P_1, P_2)$ or simply by $\mathfrak{L}_P$.

\begin{defn} Let $\mathfrak{L}_P$ be a $2$-term averaging $\mathfrak{L}_\infty$-conformal algebra. It is said to be \textbf{skeletal} if $d =0$ and is said to be \textbf{strict} if $l_3 =0$ and $P_2 =0$.

If $\mathfrak{L}_P$ is skeletal, then 
\begin{enumerate}
    \item[(sk1)] $[[m_\l n]]=0$,
    \item[(sk2)] $[[x_\l m]]=-[[m_{-\p-\l}x]]$,
   \item[(sk3)] $[[x_\l y]]=-[[y_{-\p-\l}x]]$ ,  
    \item [(sk4)] $0= ([[x_\l ([[y_\m z]])]]-[[([[x_\l y]])_{\l+\m},z]]- [[y_\m ([[x_\l z]])]]$,
    \item [(sk5)]$0= [[x_\l ([[y_\m m]])]] - [[([[x_{\l}y]])_{\l+\m}m]] - [[y_\m ([[x_\l m]])]]$,
    \item[(sk6)] \begin{align*}
       & [[x_\l {l_3}_{\m,\nu}(y,z,w)]] - [[y_\m {l_3}_{\l,\nu}(x,z, w)]] +  [[z_{\nu} {l_3}_{\l,\m}(x,y, w)]]- [[w_{-\p-\l-\m-\nu} {l_3}(x,y, z)]] \\&= {l_3}_{\l+\m,\nu}([[x_\l y]] ,z,w) +{l_3}_{\m,\l+\nu}(y, [[x_\l z]] ,w)+ {l_3}_{\m,\nu}(y,z, [[x_\l w]]) \\& +{l_3}_{\l,\m+\nu}(x, [[y_\m z]], w) - {l_3}_{\l,\nu}(x,z, [[y_\m w]])+ {l_3}_{\l,\m}(x,y ,[[z_\nu w]]),
    \end{align*}
    \item [(sk7)] $0 =  [[{P_0(x)}_\l P_0(y) ]] - P_0([[P_0(x)_\l y]])$, 
      \item [(sk8)] $0 =- P_1([[P_0(x)_\l m ]]) + [[P_0(x)_\l P_1(m)]] = -P_1([[x_\l P_1(m) ]]) +[[{P_0}(x)_\l P_1(m)]] ,$  
      \item [(sk9)] $0 =- P_1([[m_\l P_0(x) ]]) + [[{P_1(m)}_\l P_0(x)]]=- P_1([[{P_1(m)}_\l x ]]) + [[{P_1(m)}_\l P_0(x)]] $,
     \item [(sk10)] \begin{align*}&{l_3}_{\l,\m}(P_0(x), P_0(y), P_0(z)) - P_1 {l_3}_{\l,\m}(P_0(x), y, z)\\&= [[{P_0(x)}_\l {P_2}_\m(y, z)]] - [[{{P_2}_\l (x,y)}_{\l+\m } P_0 (z)]] - [[{P_0(y)}_\m {{P_2}_\l(x, z)}]] \\&+ {P_2}_\l  (x, ([[{P_0(y)}_\m z]])) - {P_2}_{\l+\m}([[{P_0(x)}_\l y]] , z) -{P_2}_\m (y, [[{P_0(x)}_\l z]]).\end{align*} 
\end{enumerate}
If $\mathfrak{L}_P$ is {strict} if $l_3 =0$, then 
  \begin{enumerate}
    \item[(st1)] $[[m_\l n]]=0$,
    \item[(st2)] $[[x_\l m]]=-[[m_{-\p-\l}x]],$
   \item[(st3)] $[[x_\l y]]=-[[y_{-\p-\l}x]],$
\item [(st4)]$d ([[x_\l m]])=  [[x_\l dm]],$   
\item [(st5)]$ [[dm_\l n]]= [[m_\l dn]],$  
    \item [(st6)] $0= ([[x_\l ([[y_\m z]])]]-[[([[x_\l y]])_{\l+\m},z]]- [[y_\m ([[x_\l z]])]],$
    \item [(st7)]$ 0 = [[x_\l ([[y_\m m]])]] - [[([[x_{\l}y]])_{\l+\m}m]] - [[y_\m ([[x_\l m]])]],$
    \item [(st8)]  $ P_{0} d = d P_1,$ 
      \item [(st9)] $0 =  [[{P_0(x)}_\l P_0(y) ]] - P_0([[P_0(x)_\l y]])$, 
      \item [(st10)] $0=- P_1([[P_0(x)_\l m ]]) + [[P_0(x)_\l P_1(m)]] = -P_1([[x_\l P_1(m) ]]) +[[{P_0}(x)_\l P_1(m)]] ,$  
      \item [(st11)] $0 =- P_1([[m_\l P_0(x)]]) + [[{P_1(m)}_\l P_0(x)]]=- P_1([[{P_1(m)}_\l x ]]) + [[{P_1(m)}_\l P_0(x)]]. $\end{enumerate}
\end{defn}

The following result gives a characterization of skeletal $2$-term averaging $\mathfrak{L}_\infty$-conformal algebras in terms of $3$-cocycles of averaging Lie conformal algebras. 
From the definition of skeletal averaging Lie conformal algebra, it is clear that identities $sk3, sk4, sk7$ 
  on $\mathbb{C}[\p]$-module $\mathfrak{L}_0$ show that the $(\mathfrak{L}_0, [[\cdot_\l \cdot]], P_0)$ is a Lie conformal algebra denoted by ${\mathfrak{L}_0}_{P_0}$. On the other hand, $sk2, sk5, sk8, sk9$ on $\mathbb{C}[\p]$ module $\mathfrak{L}_1$ show that  ${\mathfrak{L}_1}_{P_1}=(\mathfrak{L}_1, [[\cdot_\l \cdot]], P_1)$ is conformal representation with the representation map $\rho_{\l}:\mathfrak{L}_0\times \mathfrak{L}_1\to \mathfrak{L}_1$, given by $\rho(x)_\l m= [[x_\l m]]$ for $x\in \mathfrak{L}_0$ and $m\in \mathfrak{L}_1$.
Furthermore, conditions $sk6$ and $sk10$ are equivalent to writing 
\begin{align*}
    &\delta(l_3) (w,x,y,z)=0,\\
     &\p^2(P_2)(x,y,z)={l_3}_{\l,\m}(P_0(x), P_0(y), P_0(z)) - P_1 {l_3}_{\l,\m}(P_0(x), y, z).
 \end{align*}  Therefore, $$d^3_{AL}(l_3, P_2) =( \delta^3(l_3), \p^2_{AO}(P_2) - l_3  P^{\otimes 3}_0 -P_1l_3(P_0 \otimes Id^{\otimes2}))=0.$$
Hence $(l_3, P_2)\in C^3_{AL}(\mathfrak{L}_P,\mathfrak{M}_\phi)$ is a $3$-cocycle.  The triple $(\mathfrak{L}_P, \mathfrak{M}_\phi, (l_3, P_2))$, we obtain has $1-1$ correspondence with the $2$-term averaging $\mathfrak{L}_\infty$-conformal algebra. The converse to this fact can easily be verified.
Thus, we have the following proposition:
\begin{prop}There is a $1 -1$ correspondence between skeletal $2$-term averaging $\mathfrak{L}_\infty$-conformal algebras $\mathfrak{L}_P$ and triples of the form $(\mathfrak{L}_P, \mathfrak{M}_\phi, (g, \tau) )$, where $\mathfrak{L}_P$ is an averaging Lie conformal algebra, $\mathfrak{M}_\phi$ is a representation and $(g, \tau)\in C^3_{AL}(\mathfrak{L}_P,\mathfrak{M}_\phi)$ is a $3$-cocycle.\end{prop}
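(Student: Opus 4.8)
The plan is to establish the claimed bijection by unwinding both sides into the same underlying data and then checking that the defining axioms match. First I would fix what a skeletal $2$-term averaging $\mathfrak{L}_\infty$-conformal algebra $\mathfrak{L}_P$ carries: two $\mathbb{C}[\p]$-modules $\mathfrak{L}_0,\mathfrak{L}_1$ with $d=0$ (by skeletality), a bracket $[[\cdot_\l\cdot]]$, a trilinear map $l_3:\mathfrak{L}_0^{\otimes 3}\to\mathfrak{L}_1$, and the averaging triple $(P_0,P_1,P_2)$. The goal is to read off from this the averaging Lie conformal algebra $\mathfrak{L}_{0,P_0}$, its representation $\mathfrak{M}_\phi$ on $\mathfrak{L}_1$, and the cocycle $(g,\tau)=(l_3,P_2)$, and conversely to reconstruct the skeletal structure from such a triple.

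The key steps, in order, are as follows. Since $d=0$, identity (L2)/(sk2) degenerates and (L4),(L5) become vacuous, so the remaining constraints decouple cleanly by degree. On $\mathfrak{L}_0$ I would use (sk3), (sk4), (sk7): conformal skew-symmetry together with (sk4) is exactly the Jacobi identity for $[[\cdot_\l\cdot]]$, making $(\mathfrak{L}_0,[[\cdot_\l\cdot]])$ a Lie conformal algebra, and (sk7) is precisely the averaging operator axiom $P_0([[P_0(x)_\l y]])=[[P_0(x)_\l P_0(y)]]$, so $\mathfrak{L}_{0,P_0}$ is an averaging Lie conformal algebra. Next, setting $\rho(x)_\l m=[[x_\l m]]$ and $\phi=P_1$, I would verify that (sk5) gives the representation axiom (that $\rho$ is a conformal action, via Eq.~(3)--(5)), that (sk1) records triviality of the bracket on $\mathfrak{L}_1$, and that (sk8),(sk9) are exactly the two halves of the compatibility~\eqref{eqrep} defining a representation $\mathfrak{M}_\phi$ of the averaging Lie conformal algebra. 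Finally I would match the top-degree data: (sk6) says $\delta^3(l_3)=0$ in the Lie-conformal complex $(C^*(\mathfrak{L}_0,\mathfrak{L}_1),\delta)$, while (sk10) rearranges into $\partial^2_{AO}(P_2)-l_3\,P_0^{\otimes 3}-P_1\,l_3(P_0\otimes \mathrm{Id}^{\otimes 2})=-\xi^3(l_3)-\partial^2_{AO}(P_2)$ vanishing, so that together they read $d^3_{AL}(l_3,P_2)=(\delta^3(l_3),-\xi^3(l_3)-\partial^2_{AO}(P_2))=0$, i.e.\ $(l_3,P_2)$ is a $3$-cocycle in $C^3_{AL}(\mathfrak{L}_P,\mathfrak{M}_\phi)$.

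For the converse I would start from a triple $(\mathfrak{L}_P,\mathfrak{M}_\phi,(g,\tau))$, set $\mathfrak{L}_0=\mathfrak{L}$, $\mathfrak{L}_1=\mathfrak{M}$, $d=0$, define $[[\cdot_\l\cdot]]$ on $\mathfrak{L}_0$ by the Lie conformal bracket, the mixed bracket by $[[x_\l m]]=\rho(x)_\l m$, the bracket on $\mathfrak{L}_1$ as zero, $P_0,P_1$ as the operator $P$ and $\phi$, and $(l_3,P_2)=(g,\tau)$; then each skeletal identity is recovered by reversing the implications above, with the cocycle condition $d^3_{AL}(g,\tau)=0$ splitting back into (sk6) and (sk10). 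The two assignments are manifestly mutually inverse on the level of structure maps, which yields the claimed $1$-$1$ correspondence.

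I expect the main obstacle to be the bookkeeping in the top-degree step: verifying that (sk10), which is stated with the explicit $\l,\m$-dependent placement of brackets and the mixed $P_0$-insertions, really coincides term-by-term with the second component of $d^3_{AL}(l_3,P_2)=(\delta^3(l_3),-\xi^3(l_3)-\partial^2_{AO}(P_2))$. This requires expanding $\xi^3$ on $l_3$ via its definition $\xi^3 f=f(P_0\otimes P_0\otimes P_0)-\phi\circ f(P_0\otimes\mathrm{Id}\otimes\mathrm{Id})$ and $\partial^2_{AO}$ on $P_2$ using $\rho_P=\rho(P_0-)$, and matching the resulting six mixed-bracket terms and three $l_3$-insertion terms against the right-hand side of (sk10), keeping careful track of the signs and the $\l$-spectral-parameter reindexing; the skew-symmetry relations (sk2),(sk3) will be needed to bring a few terms into the canonical ordering. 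The Lie-conformal side (sk6 $\Leftrightarrow$ $\delta^3 l_3=0$) is comparatively routine since it is the standard cocycle identity.
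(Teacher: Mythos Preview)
Your proposal is correct and follows essentially the same route as the paper: the paper's argument (given in the paragraph immediately preceding the proposition rather than in a separate proof environment) likewise reads off the averaging Lie conformal algebra structure on $\mathfrak{L}_0$ from (sk3), (sk4), (sk7), the representation on $\mathfrak{L}_1$ from (sk2), (sk5), (sk8), (sk9), and identifies (sk6), (sk10) with the two components of $d^3_{AL}(l_3,P_2)=0$. Your write-up is in fact more explicit than the paper's, which states the converse as ``easily verified'' without spelling out the reconstruction you describe; the bookkeeping concern you flag about matching (sk10) term-by-term with $-\xi^3(l_3)-\partial^2_{AO}(P_2)$ is real but routine, and is exactly the step the paper compresses into a single displayed line.
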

The above result motivates us to consider the following notion. Let $\mathfrak{L}_P $ and $\mathfrak{L}'_{P'}$ are two skeletal $2$-term averaging $\mathfrak{L}_\infty$-conformal algebras on the same chain complex. They are said to be equivalent if $[[\cdot_\l \cdot]]=[[\cdot_\l \cdot]]' $, $P_0=P_0$, $P_1=P_1$ and there exists a conformal skew-symmetric bilinear map $f : \mathfrak{L}_0 \otimes \mathfrak{L}_0 \to \mathfrak{L}_1$ and a $\mathbb{C}$-linear map $\xi : \mathfrak{L}_0 \to \mathfrak{L}_1$ such that $(l'_3,P'_2)=(l_3,P_2)+d^2_{AL}((f,\xi))$, where $d^2_{AL}$ is the coboundary operator of the averaging Lie conformal algebra ${\mathfrak{L}_0}_{P_0}$ with coefficients in the representation ${\mathfrak{L}_1}_{P_1}$. Thus, we arrive at the following theorem:\begin{thm}There is a $1 -1$ correspondence between the equivalence class of skeletal $2$-term averaging $\mathfrak{L}_\infty$-conformal algebras $\mathfrak{L}_P$ and triples of the form $(\mathfrak{L}_P, \mathfrak{M}_\phi, (g,\tau))$, where $\mathfrak{L}_P $ is an averaging Lie conformal algebra, $\mathfrak{M}_\phi$ is a representation and $(g, \tau)\in H^3_{AL}(\mathfrak{L}_P,\mathfrak{M}_\phi)$ is a $3rd$-cohomology class.\end{thm}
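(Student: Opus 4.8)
The plan is to promote the bijection of the preceding Proposition to the level of equivalence/cohomology classes, so that passing both sides to the appropriate quotient yields the asserted correspondence. Write $\Phi$ for the assignment of that Proposition, sending a skeletal $2$-term averaging $\mathfrak{L}_\infty$-conformal algebra $\mathfrak{L}_P$ to the triple $(\mathfrak{L}_P,\mathfrak{M}_\phi,(l_3,P_2))$, where the averaging Lie conformal structure on $\mathfrak{L}_0$ and the representation structure on $\mathfrak{L}_1$ are read off from $(sk1)$--$(sk9)$, and where $(l_3,P_2)\in Z^3_{AL}(\mathfrak{L}_P,\mathfrak{M}_\phi)$ is a $3$-cocycle by virtue of $(sk6)$ and $(sk10)$ encoding $d^3_{AL}(l_3,P_2)=0$. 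I have already recorded there that $\Phi$ is a bijection onto the set of $3$-cocycles; the only new content is that it respects the two equivalence relations.

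First I would check that $\Phi$ descends to the quotients. By the definition of equivalence, two skeletal algebras sharing the same chain complex, bracket, $P_0$ and $P_1$ are equivalent precisely when $(l'_3,P'_2)=(l_3,P_2)+d^2_{AL}((f,\xi))$ for some conformal skew-symmetric bilinear $f:\mathfrak{L}_0\otimes\mathfrak{L}_0\to\mathfrak{L}_1$ and some $\mathbb{C}$-linear $\xi:\mathfrak{L}_0\to\mathfrak{L}_1$. Since $d^2_{AL}((f,\xi))\in B^3_{AL}(\mathfrak{L}_P,\mathfrak{M}_\phi)$ is a coboundary, this says exactly that $\Phi(\mathfrak{L}_P)$ and $\Phi(\mathfrak{L}'_{P'})$ represent the same class in $H^3_{AL}(\mathfrak{L}_P,\mathfrak{M}_\phi)$. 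Hence $\Phi$ induces a well-defined map $\bar\Phi$ from equivalence classes of skeletal algebras to $H^3_{AL}(\mathfrak{L}_P,\mathfrak{M}_\phi)$.

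Next I would show $\bar\Phi$ is a bijection. Surjectivity is immediate, since any class in $H^3_{AL}$ has a cocycle representative $(g,\tau)$ and $\Phi^{-1}(g,\tau)$ is a skeletal algebra mapping to it. For injectivity I would argue in reverse: if $\Phi(\mathfrak{L}_P)$ and $\Phi(\mathfrak{L}'_{P'})$ are cohomologous, then $(l'_3,P'_2)-(l_3,P_2)=d^2_{AL}((f,\xi))$ for some $(f,\xi)\in C^2_{AL}$. Unravelling $d^2_{AL}(f,\xi)=(\delta^2(f),-\xi^2(f)-\partial^1(\xi))$ into components recovers the modification of $l_3$ by $\delta^2(f)$ and of $P_2$ by $-\xi^2(f)-\partial^1(\xi)$, which is by construction the data of an isomorphism $(Id_{\mathfrak{L}_0},Id_{\mathfrak{L}_1},f)$ of the underlying $2$-term $\mathfrak{L}_\infty$-conformal algebras together with the averaging-operator compatibility furnished by $\xi$. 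Because a coboundary added to a cocycle is again a cocycle ($d^3_{AL}\circ d^2_{AL}=0$ by the Theorem above), $(l'_3,P'_2)$ is again a $3$-cocycle, so $\Phi^{-1}(l'_3,P'_2)$ is a genuine skeletal algebra and the pair $(f,\xi)$ exhibits an equivalence with $\mathfrak{L}_P$. Thus cohomologous cocycles arise from equivalent skeletal algebras, giving injectivity of $\bar\Phi$.

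The main obstacle is precisely the component-wise identification used in the previous paragraph: one must verify that the triple $(Id_{\mathfrak{L}_0},Id_{\mathfrak{L}_1},f)$ satisfies the homomorphism axioms $(H1)$--$(H5)$ and the compatibility with the homotopy averaging operators, so that the reading of $d^2_{AL}((f,\xi))$ matches exactly the differences $l'_3-l_3$ and $P'_2-P_2$ prescribed by those axioms. Concretely this means comparing $\delta^2(f)$ with the $l_3$-part of $(H5)$ and $-\xi^2(f)-\partial^1(\xi)$ with the $P_2$-part coming from the averaging compatibility, while tracking the conformal sesqui-linearity and skew-symmetry of $f$ and $\xi$ to ensure the resulting maps remain in the correct cochain spaces. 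Once this bookkeeping is settled, the theorem follows formally from the bijection $\Phi$ of the Proposition together with the descent established above.
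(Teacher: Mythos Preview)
Your proposal is correct and follows the same route as the paper: use the Proposition's bijection between skeletal structures and $3$-cocycles, and then pass to the quotient by observing that the paper \emph{defines} equivalence of skeletal algebras precisely as $(l'_3,P'_2)=(l_3,P_2)+d^2_{AL}((f,\xi))$. Given that tautological definition, the descent is immediate and the paper states the theorem without further argument; your concern in the final paragraph about verifying $(H1)$--$(H5)$ and the averaging compatibility is therefore superfluous here, since equivalence is not introduced via $\mathfrak{L}_\infty$-morphisms but directly in terms of the coboundary $d^2_{AL}$.
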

Next, we introduce the crossed module of averaging Lie conformal algebra and characterize strict $2$-term $\mathfrak{L}_\infty$-conformal algebra. The concept of crossed module for an averaging Lie algebra was previously defined in \cite{DS}. Here, we extend this notion by introducing the crossed module for averaging Lie conformal algebras.
\begin{defn} A crossed module of averaging Lie conformal algebras is a quadruple $({\mathfrak{L}_1}_{P_1}, {\mathfrak{L}_0}_{P_0}, d, \rho)$, where ${\mathfrak{L}_1}_{P_1}$ and ${\mathfrak{L}_0}_{P_0}$ are both averaging Lie conformal algebras, $d:{\mathfrak{L}_1}_{P_1}\to {\mathfrak{L}_0}_{P_0}$ is an averaging Lie conformal algebra morphism and $\rho:\mathfrak{L}_0 \otimes \mathfrak{L}_1\to \mathfrak{L}_1$ is a conformal sesquilinear map that makes ${\mathfrak{L}_1}_{P_1}$ into a representation of the averaging Lie conformal algebra ${\mathfrak{L}_0}_{P_0}$ satisfying additional conditions:
     \begin{align} d(\rho(x)_\l m)=&[x_\l dm]_{\mathfrak{L}_0}\\ 
     \rho(d m)_\l n =&[m_\l n]_{\mathfrak{L}_1},
     \end{align} for all $x\in \mathfrak{L}_0$ and $m,n \in \mathfrak{L}_1$.
\end{defn}
\begin{prop}\label{directsum}
    Let $({\mathfrak{L}_1}_{P_1}, {\mathfrak{L}_0}_{P_0}, d, \rho)$ be a crossed module of averaging Lie conformal algebras. Then $(\mathfrak{L}_0 \oplus \mathfrak{L}_1, P_0 \oplus P_1)$ is an averaging Lie conformal algebra, where $\mathfrak{L}_0 \oplus \mathfrak{L}_1$ is equipped with the bracket
\begin{align}\label{crossedmod}
    [{(x, m)}_\l (y, n)] := ([x_\l y]_{\mathfrak{L}_0}, {\rho(x)}_\l n - {\rho(y)}_{-\p-\l} m + [m_\l n]_{\mathfrak{L}_1}),
\end{align}
for $(x,m), (y,n) \in \mathfrak{L}_0 \oplus \mathfrak{L}_1$.
\end{prop}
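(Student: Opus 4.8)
The plan is to verify directly that the bracket defined in Equation \eqref{crossedmod} endows $\mathfrak{L}_0 \oplus \mathfrak{L}_1$ with the structure of an averaging Lie conformal algebra, with averaging operator $P_0 \oplus P_1$. This splits naturally into two tasks: first, checking that $(\mathfrak{L}_0 \oplus \mathfrak{L}_1, [\cdot_\l \cdot])$ is a Lie conformal algebra, and second, checking that $P_0 \oplus P_1$ satisfies the averaging identity from Definition 2.1. I would begin by confirming the conformal sesqui-linearity and skew-symmetry of the bracket; these follow componentwise from the corresponding properties of $[\cdot_\l \cdot]_{\mathfrak{L}_0}$, $[\cdot_\l \cdot]_{\mathfrak{L}_1}$, and the sesqui-linearity of $\rho$, together with the sign conventions implicit in the $-\p-\l$ subscripts, so this step is essentially bookkeeping.

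The substantive analytic step is the conformal Jacobi identity for the combined bracket. I would expand $[(x,m)_\l [(y,n)_\mu (z,p)]]$ and cyclically permute, collecting terms by their $\mathfrak{L}_0$-component and their $\mathfrak{L}_1$-component. The $\mathfrak{L}_0$-component reduces immediately to the Jacobi identity in $\mathfrak{L}_0$. The $\mathfrak{L}_1$-component is where the crossed-module axioms earn their keep: the terms involving $\rho$ alone vanish by the representation property (Equations (3)--(5) defining a conformal representation), the mixed terms coupling $\rho$ with $[\cdot_\l \cdot]_{\mathfrak{L}_1}$ cancel precisely by the equivariance relation $\rho(x)_\l[m_\mu n]_{\mathfrak{L}_1} = [\rho(x)_\l m \, {}_{\l+\mu} n]_{\mathfrak{L}_1} + \ldots$ (a consequence of $\rho$ being a representation of $\mathfrak{L}_0$ on $\mathfrak{L}_1$ as a Lie conformal algebra), and the purely $\mathfrak{L}_1$ terms vanish by the Jacobi identity in $\mathfrak{L}_1$. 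The crossed-module relation $\rho(dm)_\l n = [m_\l n]_{\mathfrak{L}_1}$ and its companion $d(\rho(x)_\l m) = [x_\l dm]_{\mathfrak{L}_0}$ are not strictly needed to verify that the bracket is Lie; rather, they become essential for the averaging identity.

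For the averaging condition, I would set $Q := P_0 \oplus P_1$ and verify $Q([Q(x,m)_\l (y,n)]) = [Q(x,m)_\l Q(y,n)]$. Writing out both sides using Equation \eqref{crossedmod}, the $\mathfrak{L}_0$-component is exactly the averaging identity for $P_0$ on $\mathfrak{L}_0$, which holds since ${\mathfrak{L}_0}_{P_0}$ is an averaging Lie conformal algebra. The $\mathfrak{L}_1$-component splits into three pieces: a piece from $\rho(P_0 x)_\l (P_1 n)$, a piece from $\rho(P_0 y)_{-\p-\l}(P_1 m)$, and a piece from $[P_1 m \, {}_\l P_1 n]_{\mathfrak{L}_1}$. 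The first two match the representation-level averaging compatibility recorded in Equation \eqref{eqrep}, namely $\rho(Px)_\l \phi(m) = \phi(\rho(Px)_\l m) = \phi(\rho(x)_\l \phi(m))$ with $\phi = P_1$, while the third is the averaging identity for $P_1$ on $\mathfrak{L}_1$. Because $d$ is an averaging Lie conformal algebra morphism, all these compatibilities are consistent across the two factors.

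I expect the main obstacle to be the $\mathfrak{L}_1$-component of the Jacobi identity: correctly tracking the six $\rho$-terms, the coupled $\rho$-and-$[\cdot_\l\cdot]_{\mathfrak{L}_1}$ terms, and the bracket terms, with the conformal variable substitutions $\l \mapsto -\p-\l$ applied in the right places so that the signs and spectral parameters line up. The potential pitfall is the sesqui-linearity bookkeeping around the $-\p-\l$ subscripts, where a misplaced sign or an incorrect $\p$-action can silently break an otherwise clean cancellation; I would handle this by fixing a single convention for the action of $\p$ on each tensor factor at the outset and applying it uniformly.
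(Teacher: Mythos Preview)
Your approach matches the paper's: first establish that $\mathfrak{L}_0\oplus\mathfrak{L}_1$ is a Lie conformal algebra under the bracket \eqref{crossedmod}, then check the averaging identity for $P_0\oplus P_1$ componentwise. The paper in fact dispatches the first step in a single sentence and writes out only the averaging verification, so your plan to expand the Jacobi computation is more thorough than what appears there.

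One misattribution to flag. You say the crossed-module relations $d(\rho(x)_\l m)=[x_\l dm]_{\mathfrak{L}_0}$ and $\rho(dm)_\l n=[m_\l n]_{\mathfrak{L}_1}$ are ``not strictly needed'' for Jacobi but ``become essential for the averaging identity''. This is backwards. In the paper's Definition of a crossed module, $\rho$ is only assumed to make $\mathfrak{L}_1$ into a conformal \emph{module} over $\mathfrak{L}_0$; the derivation property $\rho(x)_\l[m_\mu n]_{\mathfrak{L}_1}=[\rho(x)_\l m\,{}_{\l+\mu} n]_{\mathfrak{L}_1}+[m_\mu\,\rho(x)_\l n]_{\mathfrak{L}_1}$, which you correctly identify as what kills the mixed terms in Jacobi, is not an axiom but a consequence of combining both crossed-module relations with the representation identity. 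Conversely, the averaging check for $P_0\oplus P_1$ (as written out in the paper) uses only that $P_0$ and $P_1$ are averaging operators on their respective factors together with the compatibility \eqref{eqrep}; the map $d$ does not enter that computation at all. If you actually carry out both computations you will see this for yourself, so the error is harmless to the overall argument, but the commentary should be swapped.
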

\begin{proof}
     Since $\mathfrak{L}_0, \mathfrak{L}_1$ are both Lie conformal algebras and $\rho : \mathfrak{L}_0 \otimes \mathfrak{L}_1\to \mathfrak{L}_1$ is a conformal sesquilinear map, it follows that $\mathfrak{L}_0 \oplus \mathfrak{L}_1$ is a Lie conformal algebra with the bracket \eqref{crossedmod}. Moreover, for any $(x, m), (y, n) \in \mathfrak{L}_0 \oplus \mathfrak{L}_1$, we have
\begin{align*}
[{(P_0 \oplus P_1)(x, m)}_\l (P_0 \oplus P_1)(y, n)] &= [{(P_0(x), P_1(m))}_\l (P_0(y), P_1(n))] \\
&= ([{P_0(x)}_\l P_0(y)]_{\mathfrak{L}_0}, {\rho (P_0(x))}_\l P_1(n) - {\rho(P_0(y))}_{-\p-\l} P_1(m) + [{P_1(m)}_\l P_1(n)]_{\mathfrak{L}_1}) \\
&= (P_0[P_0(x)_\l y]_{\mathfrak{L}_0}, P_1(\rho{(P_0(x))}_\l n) - P_1(\rho(y)_{-\p-\l} P_1(m)) + P_1[P_1(m)_\l n]_{\mathfrak{L}_1})\\
&= (P_0 \oplus P_1)[{(P_0(x), P_1(m))}_\l (y, n)] \\
&= (P_0 \oplus P_1)[{(P_0 \oplus P_1)(x, m)}_\l (y, n)].
\end{align*}
This shows that the map $P_0 \oplus P_1 : \mathfrak{L}_0 \oplus \mathfrak{L}_1 \to \mathfrak{L}_0 \oplus \mathfrak{L}_1$ is an averaging operator. This proves the result.
\end{proof}
\begin{thm}\label{thmcross}
There is a $1-1$ correspondence between strict $2$-term averaging $\mathfrak{L}_\infty$-conformal algebras and crossed modules of averaging Lie conformal algebras.
\end{thm}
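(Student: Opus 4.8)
The plan is to establish the correspondence by exhibiting explicit constructions in both directions and verifying they are mutually inverse. Starting from a strict $2$-term averaging $\mathfrak{L}_\infty$-conformal algebra $\mathfrak{L}_P = (d: \mathfrak{L}_1 \to \mathfrak{L}_0, [[\cdot_\l \cdot]], l_3 = 0, P_0, P_1, P_2 = 0)$, I would first unpack what the strictness conditions $(st1)$--$(st11)$ tell us. The bracket restricted to $\mathfrak{L}_0$ makes $(\mathfrak{L}_0, [[\cdot_\l \cdot]])$ a Lie conformal algebra by $(st3)$ together with $(st6)$ (the conformal Jacobi identity on $\mathfrak{L}_0$), and $(st9)$ shows $P_0$ is an averaging operator on it, so we obtain $\mathfrak{L}_{0P_0}$. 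The defining map $\rho(x)_\l m := [[x_\l m]]$ makes $\mathfrak{L}_1$ a conformal representation via $(st7)$, and the mixed conditions $(st10)$, $(st11)$ encode that $P_1$ is compatible with $P_0$ in the averaging sense. I would then define an $\mathfrak{L}_1$-bracket by $[m_\l n]_{\mathfrak{L}_1} := [[dm_\l n]]$ (using $(st5)$ to see this is well behaved) and verify this makes $\mathfrak{L}_{1P_1}$ an averaging Lie conformal algebra, with $d$ an averaging morphism by $(st4)$ and $(st8)$.

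\emph{The key structural identities to extract} are the two crossed-module axioms $d(\rho(x)_\l m) = [x_\l dm]_{\mathfrak{L}_0}$ and $\rho(dm)_\l n = [m_\l n]_{\mathfrak{L}_1}$. The first follows directly from $(st4)$ once we rewrite $\rho(x)_\l m = [[x_\l m]]$, and the second holds by our very definition of the $\mathfrak{L}_1$-bracket. Thus from a strict $2$-term averaging $\mathfrak{L}_\infty$-conformal algebra I recover exactly the quadruple $({\mathfrak{L}_1}_{P_1}, {\mathfrak{L}_0}_{P_0}, d, \rho)$ satisfying the crossed module definition.

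For the reverse direction, given a crossed module $({\mathfrak{L}_1}_{P_1}, {\mathfrak{L}_0}_{P_0}, d, \rho)$, I would define a $2$-term structure by setting the graded bracket via $[[x_\l y]] := [x_\l y]_{\mathfrak{L}_0}$, $[[x_\l m]] := \rho(x)_\l m$, $[[m_\l x]] := -\rho(x)_{-\p-\l}m$, and $[[m_\l n]] := 0$, together with $l_3 = 0$, $P_2 = 0$, and the same $P_0, P_1, d$. The skew-symmetry and sesquilinearity conditions $(st1)$--$(st3)$ are immediate; conditions $(st4)$, $(st5)$ follow from the first crossed-module axiom and the representation property; the vanishing Jacobi-type identities $(st6)$, $(st7)$ follow from the Lie conformal algebra axioms on $\mathfrak{L}_0$ and the representation axiom; and $(st8)$--$(st11)$ follow because $d$ is an averaging morphism and $P_1$ realizes the averaging compatibility. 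I would then check that these two constructions are mutually inverse, which is essentially formal once the dictionary $\rho \leftrightarrow [[\cdot_\l \cdot]]|_{\mathfrak{L}_0 \otimes \mathfrak{L}_1}$ is fixed.

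\emph{The main obstacle} I anticipate is verifying condition $(st7)$, namely that $[[x_\l([[y_\m m]])]] - [[([[x_\l y]])_{\l+\m} m]] - [[y_\m([[x_\l m]])]] = 0$ translates correctly into the statement that $\rho$ is a genuine conformal representation together with the second crossed-module relation $\rho(dm)_\l n = [m_\l n]_{\mathfrak{L}_1}$; one must be careful that the bracket $[m_\l n]_{\mathfrak{L}_1}$ defined through $d$ is consistent, i.e. that $[[dm_\l n]] = [[m_\l dn]]$ (condition $(st5)$) is compatible with skew-symmetry of the $\mathfrak{L}_1$-bracket. Tracking the conformal parameters $\l, \m$ and the sign conventions under the $\p$-action through each identity is where the bookkeeping is most delicate, but no genuinely new idea beyond careful substitution is required.
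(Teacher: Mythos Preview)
Your proposal is correct and follows essentially the same approach as the paper's proof: the same definitions $[m_\l n]_{\mathfrak{L}_1} := [[dm_\l n]]$ and $\rho(x)_\l m := [[x_\l m]]$, the same attribution of conditions $(st1)$--$(st11)$ to the various crossed-module requirements, and the same reverse construction of the graded bracket from $(\mathfrak{L}_0,\mathfrak{L}_1,d,\rho)$. The only cosmetic difference is that you are slightly more explicit about the bookkeeping concerns with $(st5)$ and $(st7)$, which the paper treats more tersely.
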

\begin{proof}
Let $\mathfrak{L}_P = (\mathfrak{L}_1 \xrightarrow{d} \mathfrak{L}_0, [[\cdot_\l \cdot]], l_3 = 0, P_0, P_1, P_2 = 0)= (\mathfrak{L}_1 \xrightarrow{d} \mathfrak{L}_0, [[\cdot _\l \cdot]], P_0, P_1)$ be a strict $2$-term averaging $\mathfrak{L}_\infty$-conformal algebra. Then it follows from $st3$, $st6$ and $st9$ that $(\mathfrak{L}_0, [[\cdot _\l \cdot]])$ is a Lie conformal algebra and $P_0 : \mathfrak{L}_0 \to \mathfrak{L}_0$ is an averaging operator on it, i.e., ${\mathfrak{L}_0}_{P_0}$ is an averaging Lie conformal algebra. Next, we define a bilinear conformal bracket $[\cdot_\l \cdot]_{\mathfrak{L}_1} : \mathfrak{L}_1 \times \mathfrak{L}_1 \to \mathfrak{L}_1[\l]$  by
\begin{align*}
    [m_\l n]_{\mathfrak{L}_1} := [[dm_\l  n]], \quad \text{for } m,n \in \mathfrak{L}_1.
\end{align*}
From conditions $st2$, $st5$ and $st7$, we see that $(\mathfrak{L}_1, [\cdot_\l \cdot]_{\mathfrak{L}_1})$ is a Lie conformal algebra. Moreover, condition $st10$ yields $P_1 : \mathfrak{L}_1 \to \mathfrak{L}_1$, an averaging operator. Hence ${\mathfrak{L}_1}_{P_1}$ is also an averaging Lie conformal algebra. On the other hand, the conditions $st4$ and $st8$ imply that $d : {\mathfrak{L}_1}_{P_1} \to {\mathfrak{L}_0}_{P_0}$ is an averaging Lie conformal algebra morphism. Finally, we define a map $\rho : \mathfrak{L}_0 \times \mathfrak{L}_1 \to \mathfrak{L}_1$ given by
\begin{align}
    \rho(x)_\l m := [[x_\l m]], \quad \text{for } x \in \mathfrak{L}_0, m \in \mathfrak{L}_1.
\end{align}
It follows from $st7$ and $st10$ that $\rho$ makes ${\mathfrak{L}_1}_{P_1}$ into a representation of the averaging Lie conformal algebra ${\mathfrak{L}_0}_{P_0}$. We also have
\begin{align}
    d( \rho(x)_\l m) = d[[x_\l m]] = [[x_\l dm]] \quad \text{and} \quad {\rho(dm)}_\l n = [[dm_\l n]]  = [m_\l n]_{\mathfrak{L}_1},
\end{align}for $x \in \mathfrak{L}_0, m, n \in \mathfrak{L}_1$. Hence $( {\mathfrak{L}_0}_ {P_0},{\mathfrak{L}_1}_ {P_1}, d, \rho)$ is a crossed module of averaging Lie conformal algebras.

Conversely, let $({\mathfrak{L}_1}_ {P_1}, {\mathfrak{L}_0}_ {P_0}, d, \rho)$ be a crossed module of averaging Lie conformal algebras. Then it is easy to verify that $(\mathfrak{L}_1 \xrightarrow{d} \mathfrak{L}_0, [[\cdot_\l \cdot]], l_3 = 0, P_0, P_1, P_2 = 0)$ is a strict $2$-term averaging $\mathfrak{L}_\infty$-conformal algebra, where the bracket \([[\cdot_\l \cdot]] : \mathfrak{L}_i \times \mathfrak{L}_j \to \mathfrak{L}_{i+j}[\l]\) (for \(0 \leq i, j \leq 1\)) is given by
\begin{align*}
    [[x_\l y]] := [x_\l y]_{\mathfrak{L}_0}, \quad [[x_\l m]] = -[[m_{-\p-\l} x]] := \rho(x)_\l m \quad \text{and} \quad [[m_\l n]] := 0,
\end{align*}
for $x, y \in \mathfrak{L}_0, m, n \in \mathfrak{L}_1$. The above two correspondences are inverse to each other. This completes the proof.\end{proof}
Combining the Proposition \ref{directsum} and Theorem \ref{thmcross}, we get the following results.
\begin{prop}
    Let $\mathfrak{L}_P = (\mathfrak{L}_1 \xrightarrow{d} \mathfrak{L}_0, [[\cdot_\l \cdot]], l_3 = 0, P_0, P_1, P_2 = 0)$ be a strict $2$-term averaging $\mathfrak{L}_\infty$-conformal algebra. Then ${\mathfrak{L}_0 \oplus \mathfrak{L}_1}_ {P_0 \oplus P_1}$ is an averaging Lie conformal algebra with the Lie conformal bracket given by
\begin{align}
    [{(x, m)}_\l (y, n)] := ([[x_\l y]],[[x_\l n]] - [[y_{-\p-\l} m]] + [[dm_\l n]]),
\end{align}
for \((x, m), (y, n) \in \mathfrak{L}_0 \oplus \mathfrak{L}_1\).
\end{prop}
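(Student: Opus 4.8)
The plan is to derive this statement directly from Theorem \ref{thmcross} together with Proposition \ref{directsum}, since the two results supply exactly the two ingredients needed and the preceding text already flags that the proposition is obtained by ``combining'' them. First I would invoke Theorem \ref{thmcross}: because $\mathfrak{L}_P$ is a strict $2$-term averaging $\mathfrak{L}_\infty$-conformal algebra, it corresponds under that theorem to a crossed module of averaging Lie conformal algebras $({\mathfrak{L}_1}_{P_1}, {\mathfrak{L}_0}_{P_0}, d, \rho)$. The proof of Theorem \ref{thmcross} furnishes the precise dictionary: the bracket on $\mathfrak{L}_0$ is $[x_\l y]_{\mathfrak{L}_0} = [[x_\l y]]$, the bracket on $\mathfrak{L}_1$ is $[m_\l n]_{\mathfrak{L}_1} = [[dm_\l n]]$, and the action is $\rho(x)_\l m = [[x_\l m]]$, while $P_0$ and $P_1$ are the respective averaging operators.

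Next I would apply Proposition \ref{directsum} to this crossed module. It asserts that $(\mathfrak{L}_0 \oplus \mathfrak{L}_1, P_0 \oplus P_1)$ is an averaging Lie conformal algebra whose bracket is
\begin{align*}
    [{(x, m)}_\l (y, n)] = ([x_\l y]_{\mathfrak{L}_0}, {\rho(x)}_\l n - {\rho(y)}_{-\p-\l} m + [m_\l n]_{\mathfrak{L}_1}).
\end{align*}
Substituting the dictionary above --- namely $[x_\l y]_{\mathfrak{L}_0} = [[x_\l y]]$, $\rho(x)_\l n = [[x_\l n]]$, $\rho(y)_{-\p-\l} m = [[y_{-\p-\l} m]]$ (the last being the action evaluated at $\m = -\p-\l$), and $[m_\l n]_{\mathfrak{L}_1} = [[dm_\l n]]$ --- turns this formula into precisely the bracket displayed in the statement. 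The averaging property of $P_0 \oplus P_1$ transfers verbatim from Proposition \ref{directsum}, so nothing further has to be checked.

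There is essentially no hard step here, as both inputs are already established; the only point that deserves a moment's care is the consistency of the skew-symmetry conventions. One should confirm that the term $[[y_{-\p-\l} m]]$ appearing in the stated second component is genuinely $\rho(y)_{-\p-\l} m$ rather than some other evaluation; this is immediate from $\rho(y)_\m m = [[y_\m m]]$ specialized at $\m = -\p - \l$, and it is compatible with the conformal sesqui-linearity and with condition $(st2)$, which reads $[[x_\l m]] = -[[m_{-\p-\l}x]]$. With this bookkeeping matched, the composition of the two cited results yields the claim.
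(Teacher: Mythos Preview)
Your proposal is correct and follows exactly the approach indicated in the paper: the statement is obtained by combining Theorem \ref{thmcross} and Proposition \ref{directsum}, and the paper offers no further argument beyond that remark. Your explicit dictionary translating the crossed-module bracket of Proposition \ref{directsum} into the $[[\cdot_\l\cdot]]$-notation is precisely what is needed to see that the two displayed formulas agree.
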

\begin{ex}
    Let $\mathfrak{L}_P$ be an averaging Lie conformal algebra. Then $(\mathfrak{L}_P, \mathfrak{L}_P, \mathrm{Id}, \mathrm{ad})$ is a crossed module of averaging Lie conformal algebras, where $\mathrm{ad}$ denotes the adjoint representation. Therefore, it follows that,
$\left( \mathfrak{L} \xrightarrow{\mathrm{Id}} \mathfrak{L}, [\cdot_\l \cdot ]_\mathfrak{L}, l_3 = 0, P_0 = P, P_1 = P, P_2 = 0 \right)$
is a strict $2$-term averaging $\mathfrak{L}_\infty$-conformal algebra.
\end{ex}
\begin{ex}
    Let \(\mathfrak{L}_P, \mathfrak{H}_Q\) be two averaging Lie conformal algebras and $f : \mathfrak{L}_P \to \mathfrak{H}_Q$ be an averaging Lie algebra morphism. Then $(\ker f, \mathfrak{L}, i, \mathrm{ad})$ is a crossed module of averaging Lie algebras, where $i: \ker f \to \mathfrak{L}$ is the inclusion map.
\end{ex}
\section{Non-abelian extension of averaging Lie conformal algebra}In this section, we consider the non-abelian extensions of averaging Lie conformal algebra $\mathfrak{L}_P$ by another averaging  Lie conformal algebra $\mathfrak{H}_Q$. We define a second non-abelian cohomology group, and show that the non-abelian extensions can be classified by the second non-abelian cohomology group of averaging Lie conformal algebras.
\begin{defn}
     Let $\mathfrak{L}_P$ and $\mathfrak{H}_Q$ be two averaging Lie conformal algebras. A non-abelian extension of $\mathfrak{L}_P$ by  $\mathfrak{H}_Q$, is an averaging  Lie conformal algebra $\mathfrak{E}_R$ together with a short exact sequence
\begin{align}\label{nonab}
   0 \longrightarrow \mathfrak{H}_Q \xrightarrow{i} \mathfrak{E}_R \xrightarrow{p} \mathfrak{L}_P \longrightarrow 0.
\end{align} of averaging Lie conformal algebras.
Often we denote a non-abelian extension as above simply by $\mathfrak{E}_R$.
Let $\mathfrak{E}_R$ and $\mathfrak{E}'_R$ be two
extensions of $\mathfrak{L}_P$ by  $\mathfrak{H}_Q$. They are called equivalent if there exists a momorphism $\tau : \mathfrak{E}_R\to\mathfrak{E}'_R$ of averaging Lie
conformal algebras 
such that the following diagram commutes
$$\begin{tikzcd}
0 \arrow{r} & \mathfrak{H}_Q \arrow{r}{i} \arrow{d}[swap]{=} & \mathfrak{E}_R \arrow{r}{p} \arrow{d}[swap]{\tau} & \mathfrak{L}_P \arrow{r}\arrow{d}[swap]{=} & 0\\
0 \arrow{r} & \mathfrak{H}_Q \arrow{r}{i'} & \mathfrak{E}'_{R'} \arrow{r}{p'} & \mathfrak{L}_P \arrow{r} &0.
\end{tikzcd}$$ The set of all equivalence classes of non-abelian extensions of $\mathfrak{L}_P$ by $\mathfrak{H}_Q$ is denoted by $Ext_{nab}(\mathfrak{L}_P , \mathfrak{H}_Q)$.
\end{defn}
The non-abelian extension of averaging Lie conformal algebra defined here is also
a $\mathbb{C}[\p]$-split extension. In this regard, we can require averaging Lie conformal algebra $\mathfrak{L}_P$ to be
projective as $\mathbb{C}[\p]$-module. It is a generalization of $\mathbb{C}[\p]$-split abelian extension of averaging Lie conformal algebras.\\
Let $({\mathfrak{L}_1}_{P_1}, {\mathfrak{L}_0}_{P_0}, d, \rho)$ be a crossed module of averaging Lie conformal algebras. Then the exact sequence
$$0 \longrightarrow {\mathfrak{L}_1}_{P_1} \xrightarrow{i}{\mathfrak{L}_0}_{P_0}\oplus{\mathfrak{L}_1}_{P_1}\xrightarrow{p} {\mathfrak{L}_0}_{P_0} \longrightarrow 0.$$
is a non-abelian extension of ${\mathfrak{L}_0}_{P_0}$ by  ${\mathfrak{L}_1}_{P_1}$, where the averaging Lie conformal algebra structure on ${\mathfrak{L}_0\oplus\mathfrak{L}_1}_{P_0\oplus P_1}$
is given in Proposition \ref{directsum}. Thus, it follows from Theorem \ref{thmcross} that a strict $2$-term averaging $\mathfrak{L}_\infty$-conformal algebra gives rise to a non-abelian extension of averaging Lie conformal algebras.\\ Given a non-abelian extension ${\mathfrak{E}_R}$ of ${\mathfrak{L}_P}$ by ${\mathfrak{H}_Q}$, we can define the section map $s:{\mathfrak{L}_P} \to {\mathfrak{E}_R}$ that satisfy $p\circ s=id_{\mathfrak{L}_P}$. Now we define two $\mathbb{C}[\p]$-module maps $\chi_\l:\mathfrak{L}_P\times\mathfrak{L}_P\to\mathfrak{H}_Q[\l]$ and $\rho: \mathfrak{L}_P \times \mathfrak{H}_Q \to \mathfrak{H}_Q[\l]$  and $\Phi : \mathfrak{L}_P\to \mathfrak{H}_Q$  by
\begin{align}\label{maps}
\chi_\l(x, y) &:= [s(x)_\l s(y)]_\mathfrak{E}  -s [x_\l y]_\mathfrak{L}\\
\rho(x)_\l(m) &:= [s(x)_\l  m]_\mathfrak{E}\\
\Phi(x) &:= R(s(x)) - s(P(x)).
\end{align}
Since exact sequence \eqref{nonab} defines a non-abelian extension of the averaging Lie conformal algebra $\mathfrak{L}$ by another averaging Lie conformal algebra $\mathfrak{H}$
(by forgetting the average operators), it follows from \cite{F} that non-abelian $2$-cocycle of $\mathfrak{L}_P$ with values in $\mathfrak{H}_Q$ satisfy following equations \begin{align}\label{2cocycle1}(\rho(x)_\l {\rho(y)}_\m - {\rho(y)} _\m{\rho(x)}_\l) (h) - \rho([x_\l y])_{\l+\m}(h) &= [\chi_\l(x, y)_{\l+\m} h]_\mathfrak{H}\\
    \rho(x)_\l \chi_\m(y, z) + \rho(y)_\m \chi_\l(z, x) + \rho(z)_{-\p-\l-\m} \chi_\l(x, y)&\nonumber\\ \label{2cocycle2}- \chi_{\l+\m}([y_\m z]_\mathfrak{L}, x) - \chi_{\l+\m}([z_{-\p-\l} x]_\mathfrak{L}, y) - \chi_{\l+\m}([x_\l y]_\mathfrak{L}, z)  &= 0,
\end{align}for all $x, y, z \in \mathfrak{L}$ and $ h \in  \mathfrak{H}$. In terms of $\mathfrak{L}_P$ and $\mathfrak{H}_Q$, the above expression can be expressed in the following form.
\begin{lem} The maps $\chi_\l$, $\psi$, and $\Phi$ defined above satisfy the following compatible conditions: for all $x, y \in \mathfrak{L}$ and $h \in \mathfrak{H}$,
\begin{align}\label{E1} {\rho(P(x))}_\l Q(h) &= Q{(\rho(P(x))}_\l h + Q[\Phi(x)_\l h]_\mathfrak{H} - [\Phi(x)_\l Q(h)]_\mathfrak{H} \\&\nonumber= Q(\rho(x)_\l Q(h)) - [\Phi(x)_\l  Q(h)]_\mathfrak{H},
\end{align}
\begin{align}\label{E2}
\chi_\l(P(x), P(y)) - Q(\chi_\l(P(x), y)) - \Phi[P(x)_\l y]_{\mathfrak{L}} + \rho(P(x))_\l\Phi(y) - \rho(P(y))_{-\p-\l}\Phi(x) &+\\\nonumber Q(\rho(y)_{-\p-\l}\Phi(x)) + [\Phi(x)_\l \Phi(y)]_{\mathfrak{H}}=& 0.
\end{align}
\end{lem}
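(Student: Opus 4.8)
The plan is to derive both identities \eqref{E1} and \eqref{E2} directly from the single structural fact that $R$ is an averaging operator on $\mathfrak{E}$, by feeding it the right pairs of elements and then translating everything back through the section $s$. First I would record the three facts that make the translation work. Since $i$ and $p$ are morphisms of averaging Lie conformal algebras, we have $R\circ i = i\circ Q$ and $p\circ R = P\circ p$; identifying $\mathfrak{H}$ with $i(\mathfrak{H})=\ker p$, the first says $R|_{\mathfrak{H}} = Q$, and the second gives $p(R(s(x))) = P(x) = p(s(P(x)))$, so that $\Phi(x) = R(s(x)) - s(P(x))$ lies in $\mathfrak{H}$ and we may write $R(s(x)) = \Phi(x) + s(P(x))$. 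I would also note that $\rho(x)_\l m = [s(x)_\l m]_{\mathfrak{E}}$ and $\chi_\l(x,y)$ both land in $\ker p = \mathfrak{H}$, and that from the defining identity $R([R(a)_\l b]) = [R(a)_\l R(b)]$ together with conformal skew-symmetry one also obtains the mirror form $R([a_\l R(b)]) = [R(a)_\l R(b)]$.

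For \eqref{E1} I would apply these two forms of the averaging identity to $a = s(x)$ and $b = h\in\mathfrak{H}$. Expanding $[R(s(x))_\l R(h)] = [(\Phi(x)+s(P(x)))_\l Q(h)]$ splits into $[\Phi(x)_\l Q(h)]_{\mathfrak{H}} + \rho(P(x))_\l Q(h)$, while $R([R(s(x))_\l h])$ becomes $Q(\rho(P(x))_\l h) + Q([\Phi(x)_\l h]_{\mathfrak{H}})$ after using $R|_{\mathfrak{H}} = Q$ on the two summands, both of which lie in $\mathfrak{H}$; equating the two expressions yields the first equality of \eqref{E1}. Feeding the mirror identity the same $a$ and $b$ gives $Q(\rho(x)_\l Q(h)) = [\Phi(x)_\l Q(h)]_{\mathfrak{H}} + \rho(P(x))_\l Q(h)$, which rearranges to the second equality.

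For \eqref{E2} I would apply $R([R(s(x))_\l s(y)]) = [R(s(x))_\l R(s(y))]$. On the right, expanding both $R(s(x)) = \Phi(x)+s(P(x))$ and $R(s(y)) = \Phi(y)+s(P(y))$ produces four brackets, which via skew-symmetry and the definitions of $\rho$ and $\chi$ become $[\Phi(x)_\l \Phi(y)]_{\mathfrak{H}} - \rho(P(y))_{-\p-\l}\Phi(x) + \rho(P(x))_\l \Phi(y) + \chi_\l(P(x),P(y)) + s[P(x)_\l P(y)]_{\mathfrak{L}}$. On the left, expanding $R(s(x))$ and applying $R$ to the resulting sum, the two $\mathfrak{H}$-valued summands $-\rho(y)_{-\p-\l}\Phi(x)$ and $\chi_\l(P(x),y)$ are hit by $Q$, while the remaining term is $R(s[P(x)_\l y]_{\mathfrak{L}}) = \Phi[P(x)_\l y]_{\mathfrak{L}} + s(P[P(x)_\l y]_{\mathfrak{L}})$; here the averaging identity of $P$ on $\mathfrak{L}$ gives $P[P(x)_\l y]_{\mathfrak{L}} = [P(x)_\l P(y)]_{\mathfrak{L}}$, so the two occurrences of $s[P(x)_\l P(y)]_{\mathfrak{L}}$ cancel between the two sides. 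Collecting the surviving terms and moving everything to one side produces exactly \eqref{E2}.

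The routine parts are safe; the main obstacle is the bookkeeping in \eqref{E2}: one must track which summands genuinely lie in $\mathfrak{H}$ (so that $R$ may be replaced by $Q$) versus which still involve $s(\cdots)\in\mathfrak{E}$, and handle the conformal skew-symmetry shifts $\l \mapsto -\p-\l$ consistently so that the cross terms $[\Phi(x)_\l s(P(y))]$ and $[s(P(x))_\l\Phi(y)]$ acquire the correct sign and argument. The clean cancellation of the section-dependent term $s[P(x)_\l P(y)]_{\mathfrak{L}}$, which hinges on the averaging identity for $P$, is the key checkpoint confirming the computation.
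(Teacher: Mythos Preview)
Your proposal is correct and follows essentially the same approach as the paper: both proofs reduce \eqref{E1} and \eqref{E2} to the averaging identity for $R$ (together with $R|_{\mathfrak{H}}=Q$ and the averaging identity for $P$), via the substitution $R(s(x)) = \Phi(x) + s(P(x))$. The only difference is cosmetic organization---the paper starts from the expression to be shown zero and cancels down, whereas you start from $R([R(s(x))_\l b]) = [R(s(x))_\l R(b)]$ and expand up---but the terms and the cancellation of $s[P(x)_\l P(y)]_{\mathfrak{L}}$ using $P[P(x)_\l y]_{\mathfrak{L}} = [P(x)_\l P(y)]_{\mathfrak{L}}$ are identical.
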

\begin{proof}Consider that
\begin{align*}
   \rho(P(x))_\l Q(h)  &- Q(\rho(P(x))_\l h) - Q[\Phi(x)_\l h]_\mathfrak{H} + [\Phi(x)_\l Q(h)]_\mathfrak{H}
    \\&= [sP(x)_\l Q(h)]_{\mathfrak{E}} - Q[sP(x)_\l h]_{\mathfrak{E}} - Q[Rs(x)_\l h]_{\mathfrak{E}} + Q[s P(x)_\l h]_{\mathfrak{E}} + [Rs(x)_\l Q(h)]_{\mathfrak{E}} - [sP(x)_\l Q(h)]_{\mathfrak{E}}\\&
= -Q[Rs(x)_\l h]_{\mathfrak{E}} + [Rs(x)_\l R(h)]_{\mathfrak{E}} \quad (\text{as } Q = R|_{\mathfrak{H}})\\&
= 0.\end{align*}
Since $R$ \text{ is an embedding tensor}.
We also have
\begin{align*}
 \rho(P(x))_\l Q(h)    &- Q(\rho(x)_\l Q(h)) + [{\Phi(x)}_\l Q(h)]_\mathfrak{H}
\\&= [sP(x)_\l Q(h)]_{\mathfrak{E}} - Q[s(x)_\l Q(h)]_{\mathfrak{E}} + [Rs(x)_\l Q(h)]_{\mathfrak{E}} - [s P(x)_\l Q(h)]_{\mathfrak{E}}
\\&= -Q[s(x)_\l Q(h)]_{\mathfrak{E}} + [Rs(x)_\l R(h)]_{\mathfrak{E}} \quad (\text{as } Q = R|_{\mathfrak{H}})
\\&= 0.
\end{align*}
This proves the identities in \eqref{E1}. To prove the identity \eqref{E2}, we observe that
\begin{align*}
   &\chi_\l(P(x), P(y)) - Q(\chi_\l(P(x), y)) - \Phi[P(x)_\l y]_{\mathfrak{L}} \\&+ \rho(P(x))_\l\Phi(y) - \rho(P(y))_{-\p-\l}\Phi(x) + Q(\rho(y)_{-\p-\l}\Phi(x)) + [\Phi(x)_\l \Phi(y)]_{\mathfrak{H}}
\\&= [sP(x)_\l sP(y)]_{\mathfrak{E}} - s[P(x)_\l P(y)]_{\mathfrak{L}} - Q([sP(x)_\l s(y)]_{\mathfrak{E}} - s[P(x)_\l y]_{\mathfrak{L}}) - Rs[P(x)_\l y]_{\mathfrak{L}} \\&+ sP[P(x)_\l y]_{\mathfrak{L}}
+ [sP(x)_\l Rs(y)]_{\mathfrak{E}} - [s P(x)_\l s P(y)]_{\mathfrak{E}} - [sP(y)_{-\p-\l} Rs(x)]_{\mathfrak{E}} + [sP(y)_{-\p-\l} sP(x)]_{\mathfrak{E}}\\& + Q[s(y)_{-\p-\l}Rs(x)]_{\mathfrak{E}}
- Q[s(y)_{-\p-\l} sP(x)]_{\mathfrak{E}} + [Rs(x)_{\l} Rs(y)]_{\mathfrak{E}} - [Rs(x)_\l sP(y)]_{\mathfrak{E}} - [sP(x)_\l  Rs(y)]_{\mathfrak{E}} \\&+ [s P(x)_\l s P(y)]_{\mathfrak{E}}
\\&= - s[P(x)_\l P(y)]_{\mathfrak{L}} + Q( s[P(x)_\l y]_{\mathfrak{L}}) - Rs[P(x)_\l y]_{\mathfrak{L}} + sP[P(x)_\l y]_{\mathfrak{L}} + Q[s(y)_{-\p-\l} Rs(x)]_{\mathfrak{E}} + [Rs(x)_{\l} Rs(y)]_{\mathfrak{E}}\\&=0 .
\end{align*}The above expression vanishes as both $P$ and $R$ are embedding tensors and $Q = R|_{\mathfrak{H}}$. This completes the proof.
\end{proof}
\begin{defn}
   A triple $(\chi_\l,\rho,\Phi)$ is called non-abelian $2$-cocycle of $\mathfrak{L}$ with coefficients from $\mathfrak{H}$ satisfying the Eqs. \eqref{2cocycle1}, \eqref{2cocycle2}, \eqref{E1} and \eqref{E2}.   
\end{defn}
For a split sequence in the category of $\mathbb{C}[\p]$-modules, the section is not unique in general. For
two different sections, we have the following result.\\
Let $s' : \mathfrak{L} \to \mathfrak{E}$ be any other section of \eqref{nonab}. Define the map $\tau : \mathfrak{L} \to \mathfrak{H}$ by
\begin{align*}
    \tau(x) := s(x) - s'(x) \quad \text{for all } x \in \mathfrak{L}.
\end{align*}
Let $\chi'_\l, \rho', \Phi'$ be the maps induced by the section $s'$, given by \eqref{maps}. For all $x, y \in \mathfrak{L} $ and $h \in \mathfrak{H}$, we have:
\begin{align}\label{equivalent1}
    \rho(x)_\l h - \rho'(x)_\l h = [s(x)_\l h]_{\mathfrak{E}} - [s'(x)_\l h]_{\mathfrak{E}} = [\tau(x)_\l h]_{\mathfrak{H}},
\end{align}
\begin{align}\label{equivalent2}
    \chi_\l(x, y) - \chi'_\l(x, y) &= [s(x)_\l s(y)]_{\mathfrak{E}} - s[x_\l y]_{\mathfrak{L}} - [s'(x)_\l s'(y)]_{\mathfrak{E}} + s'[x_\l y]_{\mathfrak{L}}\\&\nonumber
= [s'(x)_\l (s - s')(y)]_{\mathfrak{E}} - [s'(y)_{-\p-\l} (s - s')(x)]_{\mathfrak{E}} - (s - s')[x_\l y]_{\mathfrak{L}} + [(s - s')(x)_\l (s - s')(y)]_{\mathfrak{H}}
\\&\nonumber = \rho'(x)_\l \tau(y) - \rho'(y)_{-\p-\l} \tau(x) - \tau[x_\l y]_{\mathfrak{L}} + [\tau(x)_\l\tau(y)]_{\mathfrak{H}},
\end{align} and
\begin{align}\label{equivalent3}
    \Phi(x) - \Phi'(x) &= (Rs - sP)(x) - (R{s'} - s'P)(x)\\&\nonumber= R(s - s')(x) - (s - s')P(x)\\&\nonumber= R \tau(x) - \tau P(x).
\end{align}
The above discussion leads us to the following definition.
\begin{defn}
    Two non-abelian $2$-cocycles $(\chi_\l,\rho,\Phi)$ and $(\chi'_\l,\rho',\Phi')$ of $\mathfrak{L}_P$ with coefficients from $\mathfrak{H}_Q$ are said to be equivalent if there exists a map $\tau: \mathfrak{L}_P \to \mathfrak{H}_Q$, that satisfies Eqs. \eqref{equivalent1}, \eqref{equivalent2} and  \eqref{equivalent3}. The set of all equivalence classes of non-abelian extensions of $\mathfrak{L}_P$ by $\mathfrak{H}_Q$ are denoted by $Ext^2_{nab}(\mathfrak{L}_P,\mathfrak{H}_Q).$ 
\end{defn}
We have the following proposition.
\begin{prop}
    Let $\mathfrak{E}$ and $\mathfrak{E}'$ be two equivalent non-abelian extensions  of $\mathfrak{L}_P$ by  $\mathfrak{H}_Q$, with different sections $s$ and $s'$ respectively. Then the choice of different sections preserves the equivalence relation between two $2$-cocycles $(\chi_\l,\rho,\Phi)$ and $(\chi'_\l,\rho',\Phi')$.
\end{prop}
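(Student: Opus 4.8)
\emph{Proof proposal.} The plan is to reduce the statement to the change-of-section computation already carried out in Eqs. \eqref{equivalent1}--\eqref{equivalent3}, by transporting the section $s$ along the equivalence isomorphism. Denote by $\theta : \mathfrak{E}_R \to \mathfrak{E}'_{R'}$ the morphism of averaging Lie conformal algebras witnessing the equivalence of the two extensions, so that $\theta \circ i = i'$, $p' \circ \theta = p$, and $\theta \circ R = R' \circ \theta$ (the last being the defining compatibility of a morphism of averaging Lie conformal algebras). First I would set $\tilde{s} := \theta \circ s : \mathfrak{L}_P \to \mathfrak{E}'_{R'}$. Since $p' \circ \tilde{s} = p' \circ \theta \circ s = p \circ s = \mathrm{id}_{\mathfrak{L}_P}$, the map $\tilde{s}$ is a genuine section of the extension $\mathfrak{E}'_{R'}$.

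Second, I would compute the non-abelian $2$-cocycle $(\tilde{\chi}_\l, \tilde{\rho}, \tilde{\Phi})$ attached to $\tilde{s}$ via \eqref{maps} and show it coincides with $(\chi_\l, \rho, \Phi)$. The key point is that $\theta$ preserves all brackets and the averaging operators and restricts to the identity on $\mathfrak{H}_Q$ (because $\theta \circ i = i'$). For instance,
\begin{align*}
\tilde{\chi}_\l(x,y) = [\theta s(x)_\l \theta s(y)]_{\mathfrak{E}'} - \theta s[x_\l y]_{\mathfrak{L}} = \theta\big([s(x)_\l s(y)]_{\mathfrak{E}} - s[x_\l y]_{\mathfrak{L}}\big) = \theta(\chi_\l(x,y)) = \chi_\l(x,y),
\end{align*}
the last equality holding since $\chi_\l(x,y) \in \mathfrak{H}_Q$ and $\theta|_{\mathfrak{H}_Q} = \mathrm{id}$. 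The same manipulation, together with $i' = \theta i$ and $R'\theta = \theta R$, gives $\tilde{\rho} = \rho$ and $\tilde{\Phi} = \Phi$; for the latter one also uses that $\Phi(x) \in \mathfrak{H}_Q$, which follows from $pR = Pp$ since $p(\Phi(x)) = P(p\,s(x)) - P(x) = 0$.

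Third, $\tilde{s}$ and $s'$ are now two sections of one and the same extension $\mathfrak{E}'_{R'}$, so the discussion preceding Eqs. \eqref{equivalent1}--\eqref{equivalent3} applies verbatim: setting $\tau(x) := \tilde{s}(x) - s'(x) = \theta(s(x)) - s'(x)$, which lies in $\mathfrak{H}_Q = \ker p'$, the induced cocycles $(\tilde{\chi}_\l,\tilde{\rho},\tilde{\Phi})$ and $(\chi'_\l,\rho',\Phi')$ satisfy the equivalence relations \eqref{equivalent1}, \eqref{equivalent2} and \eqref{equivalent3} through $\tau$. Since $(\tilde{\chi}_\l,\tilde{\rho},\tilde{\Phi}) = (\chi_\l,\rho,\Phi)$ by the previous step, the map $\tau$ exhibits $(\chi_\l,\rho,\Phi)$ and $(\chi'_\l,\rho',\Phi')$ as equivalent $2$-cocycles, which is the desired conclusion.

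I expect the main obstacle to be the second step: one must check carefully that each of the three pieces of the transported cocycle really collapses to the original, which uses a different structural property of $\theta$ in each case (bracket-preservation for $\tilde{\chi}$, compatibility with the inclusions $i,i'$ for $\tilde{\rho}$, and commutation with the averaging operators $R, R'$ for $\tilde{\Phi}$), and in each case relies on the target landing inside $\mathfrak{H}_Q$ where $\theta$ is the identity. Once this identification is in place, the remainder is an immediate appeal to the already-established change-of-section formulas.
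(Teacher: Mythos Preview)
Your proposal is correct and follows essentially the same approach as the paper: transport the section $s$ through the equivalence isomorphism (the paper calls it $\tau$, you call it $\theta$), verify that the transported cocycle coincides with $(\chi_\l,\rho,\Phi)$ using that the isomorphism preserves brackets, commutes with the averaging operators, and restricts to the identity on $\mathfrak{H}_Q$, and conclude. The only difference is cosmetic: the paper simply takes $s' := \tau \circ s$ as \emph{the} section of $\mathfrak{E}'_{R'}$ from the outset and stops once the two cocycles are shown to be equal, whereas you allow $s'$ to be arbitrary and explicitly invoke the change-of-section relations \eqref{equivalent1}--\eqref{equivalent3} as a final step---a minor expository refinement, not a different method.
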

\begin{proof}
Let $\mathfrak{E}_R$ and $\mathfrak{E}'_{R'}$ be two equivalent non-abelian extensions of $\mathfrak{L}_P$ by $\mathfrak{H}_Q$. If $s: \mathfrak{L} \to \mathfrak{E}$ is a section of the map $p$, then it is easy to observe that the map $s':= \tau \circ s$ is a section of the map $p'$. Let $(\chi'_\l, \psi', \Phi')$ be the non-abelian $2$-cocycle corresponding to the non-abelian extension $\mathfrak{E}'_{R'}$ and the section $s'$. Then we have \begin{align*}\chi'_\l(x, y) =& [{s'(x)}_\l s'(y)]_{\mathfrak{E}'} - s'[x_\l y]_{\mathfrak{L}}\\&
= [\tau \circ s(x)_\l \tau \circ s(y)]_{e'} - (\tau \circ s)[x_\l y]_{\mathfrak{L}} \\&
= \tau\left([s(x)_\l s(y)]_{\mathfrak{E}} - s[x_\l y]_{\mathfrak{L}}\right) = \chi_\l(x, y) \quad (\text{as } \tau|_{\mathfrak{H}} = \text{Id}_{\mathfrak{H}}),
\end{align*}

\begin{align*}\rho'(x)_\l(h) &= [s'(x)_\l h]_{\mathfrak{E}} = [\tau \circ s(x)_\l h]_{\mathfrak{E}'}
= \tau\left([s(x)_\l h]_{\mathfrak{E}}\right) = \rho(x)_\l(h) \quad (\text{as } \tau|_{\mathfrak{H}} = \text{Id}_{\mathfrak{H}}),
\end{align*}
and
\begin{align*}\Phi'(x) &= R'{s'(x)} - s'P(x)\\&= R'(\tau \circ s(x)) - (\tau \circ s)P(x)
\\&= \tau\left(Rs(x) - sP(x)\right) = \Phi(x) \quad (\text{as } \tau|_{\mathfrak{H}} = \text{Id}_{\mathfrak{H}}).
\end{align*}
Thus, we have $(\chi_\l, \rho, \Phi) = (\chi'_\l, \rho', \Phi')$. Hence, they give rise to the same element in $H^2_{nab}(\mathfrak{L}_P,\mathfrak{H}_Q)$. Therefore, there is a well-defined map $\Upsilon : \text{Ext}_{nab}(\mathfrak{L}_P, \mathfrak{H}_Q) \to H^2_{nab}(\mathfrak{L}_P, \mathfrak{H}_Q)$.

Conversely, let $(\chi_\l, \rho, \Phi)$  be a non-abelian $2$-cocycle of $\mathfrak{L}_P$ with values in $\mathfrak{H}_Q$. Define $\mathfrak{E} := \mathfrak{L} \oplus \mathfrak{H}$ with the skew-symmetric conformal bilinear bracket
\begin{align} [(x, h)_\l (y, k)]_{\mathfrak{E}} := ([x_\l y]_{\mathfrak{L}}, \rho(x)_\l k - \rho(y)_{-\p-\l} h + \chi_\l(x, y) + [h_\l k]_{\mathfrak{H}})
\end{align} for $(x, h), (y, k) \in \mathfrak{E} $. As noticed in \cite{F}, using the conditions (8) and (9), the bracket $[\cdot_\l \cdot ]_{\mathfrak{E}}$ satisfies the conformal Jacobi identity. In other words, $(\mathfrak{E}, [\cdot_\l \cdot]_{\mathfrak{E}})$ is a Lie conformal algebra. 

Further, we define a map $ R : \mathfrak{E} \to \mathfrak{E}$ by
\begin{align*}
    R(x, h) := (P(x), Q(h) + \Phi(x)),
\end{align*} for all $(x, h) \in \mathfrak{E}$. Then 
by using Eq. \eqref{E1}, we see that $R$ is an averaging operator on $\mathfrak{E}$  and corresponding averaging Lie conformal algebra is denoted by $\mathfrak{E}_R$.
Moreover, it is easy to see that
\begin{align}
    0 \longrightarrow \mathfrak{H}_Q \xrightarrow{i} \mathfrak{E}_R \xrightarrow{p} \mathfrak{L}_P \longrightarrow 0
\end{align}
is a non-abelian extension of the averaging Lie algebra $\mathfrak{L}_P$ by  $\mathfrak{H}_Q$, where $i(h) = (0, h)$ and $p(x, h) = x$ for all $(x, h) \in \mathfrak{E}$ and $h \in \mathfrak{H}$.

Next, let $(\chi_\l, \rho, \Phi)$ and $(\chi'_\l, \rho', \Phi') $ be two equivalent non-abelian $2$-cocycles, i.e., there exists a linear map $\tau : \mathfrak{L} \to \mathfrak{H}$ such that the identities \eqref{equivalent1}, \eqref{equivalent2}, and \eqref{equivalent3} hold. Let $\mathfrak{E}'_{R'}$ be the averaging Lie conformal algebra induced by the $2$-cocycle $(\chi', \rho', \Phi')$. Note that the Lie conformal algebra $\mathfrak{E}'=\mathfrak{L} \oplus \mathfrak{H} $ is a $\mathbb{C}[\p]$-module equipped with the Lie conformal bracket given by
\begin{align}
    [(x, h)_\l (y, k)]_{\mathfrak{E}'} := ([x_\l y]_{\mathfrak{L}}, \psi'(x)_\l k - \psi'(y)_{-\p-\l} h + \chi'_\l(x, y) + [h_\l k]_{\mathfrak{H}}),
\end{align} for $(x, h), (y, k) \in \mathfrak{E}' $. Moreover, the map $R' : \mathfrak{E}' \to \mathfrak{E}'$ is given by
$R'(x, h) := (P(x), Q(h) + \Phi'(x))$ for all $(x, h) \in \mathfrak{E}'$. 
We now define a map $\tau : \mathfrak{L} \oplus \mathfrak{H} \to \mathfrak{L} \oplus \mathfrak{H}$ by
$\phi(x, h) := (x, h + \tau(x))$
for all $(x, h) \in \mathfrak{L} \oplus \mathfrak{H}$. Then by a straightforward calculation, we have
$\phi([(x, h)_\l (y, k)]_{\mathfrak{E}}) = [\phi(x, h)_\l \phi(y, k)]_{\mathfrak{E}'}.$
Further,
\begin{align*} (U \circ \phi)(x, h) &= U'(x, h + \tau(x))
\\&= (P(x), Q(h) + Q(\tau(x)) + \Phi'(x))
\\&= (P(x), Q(h) + \tau(P(x)) + \Phi(x)) \quad (\text{by \eqref{equivalent3}})
\\&= \phi(P(x), Q(h) + \Phi(x)) = (\phi \circ U)(x, h).
\end{align*}
Hence the map $\phi: \mathfrak{E}_R \to \mathfrak{E}'_{R'}$ defines an equivalence between the two non-abelian extensions. Therefore, we obtain a well-defined map $\Lambda : H^2_{\text{nab}}(\mathfrak{L}_P, \mathfrak{H}_Q) \to \text{Ext}_{\text{nab}}(\mathfrak{L}_P, \mathfrak{H}_Q) $. Finally, it is straightforward to verify that the maps $\Upsilon$ and $\Lambda$ are inverses of each other. This completes the proof.\end{proof}
\section{Automorphisms of averaging Lie conformal algebra and Wells map}
In this section, we study the inducibility of a pair of automorphisms in the context of a non-abelian extension of averaging Lie conformal algebras and present the fundamental sequence of Wells. For this, we first define the automorphisms of averaging Lie conformal algebras, then establish the necessary and sufficient condition for these automorphisms to be inducible.

Let $\mathfrak{L}_P$ and $\mathfrak{H}_Q$ be two averaging Lie conformal algebras and \begin{align*}
    0 \longrightarrow \mathfrak{H}_Q \xrightarrow{i} \mathfrak{E}_R \xrightarrow{p} \mathfrak{L}_P \longrightarrow 0
\end{align*} be a non-abelian extension of $\mathfrak{L}_P$ by $\mathfrak{H}_Q$ with the section $s$. The corresponding non-abelian $2$-cocycles of $\mathfrak{E}_R$ is denoted by $(\chi_\l,\rho,\Phi)$. Let $Aut_\mathfrak{H}(\mathfrak{E}_R )$ be the set of all averaging Lie conformal algebra automorphisms $\gamma\in  Aut(\mathfrak{E}_R)$ that satisfies $\gamma|\mathfrak{H}\subset \mathfrak{H}$. More explicitly,
$Aut_\mathfrak{H}(\mathfrak{E}_R ) := \{\gamma \in Aut(\mathfrak{E}_R)| \gamma(\mathfrak{H}) = \mathfrak{H}\}.$

For any section $s : \mathfrak{L}\to\mathfrak{E}$ of the map $p$, for any $\gamma \in Aut_\mathfrak{H}(\mathfrak{E}_R)$, we can define a $\mathbb{C}[\p]$-module map $\bar{\gamma}: \mathfrak{L} \to \mathfrak{L}$ by $\bar{\gamma}(x) := p\gamma s(x)$, for $x \in \mathfrak{L}$. It is easy to verify that the map $\bar{\gamma}$ is independent of the choice of the section $s$. Moreover, $p$ is a projection on $\mathfrak{L}$ and $\gamma $ preserves $\mathfrak{L} $, so $\bar{\gamma}$ is a bijection on $\mathfrak{L}$.  For any $x, y \in \mathfrak{L} $, we have
\begin{align*}
 \bar{\gamma}([x_\l y]_\mathfrak{L}) = p {\gamma}(s[x, y]_\mathfrak{L}) &= p{\gamma}([s(x)_\l s(y)]_\mathfrak{E} -\chi_\l (x, y))
\\&= p{\gamma}[s(x)_\l s(y)]_\mathfrak{E} \quad(\textit{as $\gamma|\mathfrak{H} \subset \mathfrak{H}$ and $p|\mathfrak{H}= 0$})\\&= [p\gamma s(x)_\l p\gamma s(y)]_\mathfrak{L} = [\bar{\gamma}(x)_\l \bar{\gamma}(y)]_\mathfrak{L}.\end{align*}
and

\begin{align*}(P\bar{\gamma} - \bar{\gamma} P)(x) &= (P p\gamma s - p\gamma s P)(x) 
\\&= (pR \gamma s - p\gamma s P)(x)
\\&= p\gamma(Rs - sP)(x) = 0 \quad (\text{as } \gamma|_\mathfrak{H} \subset \mathfrak{H} \text{ and } p|_\mathfrak{H} = 0).
\end{align*}
This shows that the map $\bar{\gamma} : \mathfrak{L} \to \mathfrak{L}$ is an automorphism of the averaging Lie conformal algebra $\mathfrak{L}_P$. In other words, $\bar{\gamma}\in Aut(\mathfrak{L}_P)$. Observe that, $\overline{\gamma_1 \gamma_2} = \overline{\gamma_1} \ \overline{\gamma_2}.$
Now  we obtain a group homomorphism
$\Pi : Aut_\mathfrak{H}(\mathfrak{E}_R ) \to  Aut(\mathfrak{H}_Q) \times Aut(\mathfrak{L}_P))$ given by $\gamma \mapsto (\gamma|\mathfrak{H}, \bar{\gamma})$.
In general, we say any pair $(\a,\b)\in Aut(\mathfrak{H}_Q) \times Aut(\mathfrak{L}_P)$ is called inducible if $(\a,\b)$ lies in the image of $\Pi$, given above.\\
Given any pair $(\a,\b)\in Aut(\mathfrak{H}_Q) \times Aut(\mathfrak{L}_P)$ of automorphisms of averaging Lie conformal algebra, we define a new triple $(\chi_\l^{(\a,\b)}, \rho^{(\a,\b)}, \Phi^{(\a,\b)})$ of $\mathbb{C}[\p]$-module maps
$$\chi_\l^{(\a,\b)}: \mathfrak{L}\otimes \mathfrak{L} \to \mathfrak{H}[\l], \quad \rho^{(\a,\b)}: \mathfrak{L} \otimes \mathfrak{H} \to \mathfrak{H}[\l], \quad \Phi^{(\a,\b)}: \mathfrak{L} \to \mathfrak{H}$$
by \begin{align}\label{newcocycle}
    \chi^{(\a,\b)}_\l(x, y) := \a \circ \chi_\l (\b^{-1}(x), \b^{-1}(y)), \quad \rho^{(\a,\b)}(x)_\l h := \a(\rho({\b^{-1}(x)})_{\l} \a^{-1}(h)), \quad \Phi^{(\a,\b)}(x) := \a \Phi(\b^{-1}(x)),
\end{align}
for $x, y \in \mathfrak{L}$ and $h \in \mathfrak{H}$. Then we have the following result.
\begin{lem}The triple $(\chi_\l^{(\alpha,\beta)}, \rho^{(\alpha,\beta)}, \Phi^{(\alpha,\beta)})$ is a non-abelian $2$-cocycle on $\mathfrak{L}_P$ with values in $\mathfrak{H}_Q$.
\end{lem}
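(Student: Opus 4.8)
The plan is to verify the four defining identities of a non-abelian $2$-cocycle, namely Eqs. \eqref{2cocycle1}, \eqref{2cocycle2}, \eqref{E1} and \eqref{E2}, directly for the twisted triple $(\chi_\l^{(\a,\b)}, \rho^{(\a,\b)}, \Phi^{(\a,\b)})$ defined in \eqref{newcocycle}, by reducing each of them to the corresponding identity for the original cocycle $(\chi_\l, \rho, \Phi)$, which holds by hypothesis. The two structural facts that drive every reduction are that $\a\in\mathrm{Aut}(\mathfrak{H}_Q)$ and $\b\in\mathrm{Aut}(\mathfrak{L}_P)$: in particular $\a$ is $\mathbb{C}[\p]$-linear, preserves the bracket $[\cdot_\l\cdot]_\mathfrak{H}$ and commutes with $Q$ (so $\a Q=Q\a$ and hence $\a^{-1}Q=Q\a^{-1}$), while $\b^{-1}$ preserves $[\cdot_\l\cdot]_\mathfrak{L}$ and commutes with $P$ (so $\b^{-1}P=P\b^{-1}$). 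These commutation relations follow from the definition of a morphism of averaging Lie conformal algebras, so no input beyond the automorphism hypotheses is required.

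First I would substitute the formulas \eqref{newcocycle} into the left- and right-hand sides of Eq. \eqref{2cocycle1}. Using $\a^{-1}\a=\mathrm{Id}$ to cancel the inner conjugations in the composite $\rho^{(\a,\b)}(x)_\l\rho^{(\a,\b)}(y)_\m(h)$, replacing $\b^{-1}[x_\l y]_\mathfrak{L}$ by $[\b^{-1}x_\l\b^{-1}y]_\mathfrak{L}$, and pulling $\a$ out of the bracket $[\chi_\l^{(\a,\b)}(x,y)_{\l+\m}h]_\mathfrak{H}$, one finds that the twisted identity equals $\a$ applied to Eq. \eqref{2cocycle1} evaluated at the arguments $(\b^{-1}x,\b^{-1}y)$ and $\a^{-1}(h)$. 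Since the original holds, applying $\a^{-1}$ yields the desired equality. The same bookkeeping handles Eq. \eqref{2cocycle2}, which involves only $\rho$ and $\chi$.

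Next I would treat the operator identities \eqref{E1} and \eqref{E2}, where the interaction of $\a,\b$ with the averaging operators is essential. Here I would use $\b^{-1}P=P\b^{-1}$ to rewrite $\b^{-1}(P(x))=P(\b^{-1}x)$ and $\a^{-1}Q=Q\a^{-1}$ to move $Q$ through $\a^{-1}$, so that, for instance, $\rho^{(\a,\b)}(P(x))_\l Q(h)=\a\big(\rho(P\b^{-1}x)_\l Q\a^{-1}(h)\big)$; the remaining terms of \eqref{E1} transform identically, and the whole identity collapses to $\a$ applied to \eqref{E1} at $(\b^{-1}x,\a^{-1}h)$. For \eqref{E2} I would additionally use $\Phi^{(\a,\b)}[P(x)_\l y]_\mathfrak{L}=\a\Phi([P\b^{-1}x_\l \b^{-1}y]_\mathfrak{L})$, again by combining $\b^{-1}P=P\b^{-1}$ with the fact that $\b^{-1}$ is a bracket homomorphism, after which every summand acquires the common prefactor $\a$ and the identity reduces to the original \eqref{E2} at the conjugated arguments.

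The routine part is the term-by-term substitution; the only place demanding genuine care, and hence the main obstacle, is keeping the conjugations $\a(\cdot)\a^{-1}$ and the argument twists $\b^{-1}(\cdot)$ correctly aligned in the mixed terms of \eqref{E2} that simultaneously contain $P$, $Q$, a bracket in $\mathfrak{H}$ and a bracket in $\mathfrak{L}$. Once the commutation relations $\a Q=Q\a$ and $\b P=P\b$ are invoked consistently, each of the four equations factors through a single application of $\a$, so that it is equivalent to the corresponding cocycle identity for $(\chi_\l,\rho,\Phi)$ evaluated on $\b^{-1}$-twisted arguments, completing the verification.
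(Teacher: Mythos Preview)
Your proposal is correct and follows essentially the same approach as the paper's proof: the paper simply states that substituting $\beta^{-1}(x)$, $\beta^{-1}(y)$, $\alpha^{-1}(h)$ for $x$, $y$, $h$ in the original cocycle identities yields the cocycle conditions for the twisted triple, while you spell out the same reduction in the reverse direction and make explicit the automorphism relations $\alpha Q=Q\alpha$, $\beta^{-1}P=P\beta^{-1}$ and the bracket-preservation that the paper's one-line argument tacitly uses.
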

\begin{proof}
    The triple $(\chi_\l, \rho, \Phi)$ being a non-abelian $2$-cocycle implies that the identities \eqref{2cocycle1}, \eqref{2cocycle2}, \eqref{E1} and \eqref{E2} hold. In these identities, if we replace \(x\), \(y\), and \(h\) by \(\beta^{-1}(x)\), \(\beta^{-1}(y)\), and \(\alpha^{-1}(h)\), respectively, we simply get the non-abelian 2-cocycle conditions for the triple \((\chi_\l^{(\alpha,\beta)}, \rho^{(\alpha,\beta)}, \Phi^{(\alpha,\beta)})\). 
\end{proof}
\begin{defn}
A map $W : Aut(\mathfrak{H}_Q)\times  Aut(\mathfrak{L}_P ) \to H^2_{nab}(\mathfrak{L}_P , \mathfrak{H}_Q)$ by
$W((\a,\b)) = [(\chi_\l^{(\a,\b)}, \rho^{(\a,\b)}, \Phi^{(\a,\b)}) - (\chi_\l, \rho, \Phi)],$ the equivalence class of $(\chi_\l^{(\alpha,\beta)}, \rho^{(\alpha,\beta)}, \Phi^{(\alpha,\beta)})- (\chi_\l, \rho, \Phi).$ The map $W$ is called Wells map.\end{defn} 

Note that non-abelian $2$-cocycles depend on the choice of section, but Wells map does not depend on the choice of section. This can be understood by the fact that equivalent $2$-cocycles belong to the same cohomology class as described in the previous section. Since Wells map is related to these cohomology classes rather than the specific cocycles, that is why it is independent of the choice of section. Thus, different sections do not affect the outcome of the Wells map, as they yield equivalent cocycles in cohomology. This can be stated and explained in the form of a proposition given below:
\begin{prop} Wells map is independent of the choice of section.
\end{prop}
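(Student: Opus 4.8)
The plan is to show that the Wells map $W$ depends only on the pair $(\alpha,\beta)$ and on the cohomology class of the non-abelian extension, not on the particular section used to produce the $2$-cocycle. Fix a non-abelian extension $\mathfrak{E}_R$ and let $s,s':\mathfrak{L}\to\mathfrak{E}$ be two sections of $p$. First I would invoke the earlier computation following \eqref{equivalent3}: the two sections produce $2$-cocycles $(\chi_\l,\rho,\Phi)$ and $(\chi'_\l,\rho',\Phi')$ which are \emph{equivalent}, the equivalence being realized by the map $\tau(x):=s(x)-s'(x)$ satisfying \eqref{equivalent1}, \eqref{equivalent2} and \eqref{equivalent3}. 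In particular $[(\chi_\l,\rho,\Phi)]=[(\chi'_\l,\rho',\Phi')]$ in $H^2_{nab}(\mathfrak{L}_P,\mathfrak{H}_Q)$.

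Next I would examine how passing from $s$ to $s'$ affects the twisted cocycle $(\chi_\l^{(\alpha,\beta)},\rho^{(\alpha,\beta)},\Phi^{(\alpha,\beta)})$. By the defining formulas \eqref{newcocycle}, the twisted cocycle is obtained from $(\chi_\l,\rho,\Phi)$ by precomposition with $\beta^{-1}$ and conjugation by $\alpha$. Since these operations are linear and functorial, applying them to the equivalence $\tau$ for the untwisted cocycles yields a new map $\tau^{(\alpha,\beta)}:=\alpha\circ\tau\circ\beta^{-1}:\mathfrak{L}\to\mathfrak{H}$ which exhibits the equivalence between $(\chi_\l^{(\alpha,\beta)},\rho^{(\alpha,\beta)},\Phi^{(\alpha,\beta)})$ (built from $s$) and $(\chi'^{(\alpha,\beta)}_\l,\rho'^{(\alpha,\beta)},\Phi'^{(\alpha,\beta)})$ (built from $s'$). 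Concretely, I would verify that $\tau^{(\alpha,\beta)}$ satisfies the three equivalence identities \eqref{equivalent1}--\eqref{equivalent3} for the twisted cocycles; each verification is a direct substitution, using that $\alpha$ is an averaging Lie conformal algebra automorphism of $\mathfrak{H}_Q$ (so it commutes appropriately with $Q$ and the bracket $[\cdot_\l\cdot]_{\mathfrak{H}}$) and that $\beta$ is one of $\mathfrak{L}_P$.

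With both pairs of cocycles shown to be equivalent, the conclusion follows from additivity of cohomology classes. Writing $W(\alpha,\beta)$ as the class of the difference $(\chi_\l^{(\alpha,\beta)},\rho^{(\alpha,\beta)},\Phi^{(\alpha,\beta)})-(\chi_\l,\rho,\Phi)$, I would observe that
$$[(\chi_\l^{(\alpha,\beta)},\rho^{(\alpha,\beta)},\Phi^{(\alpha,\beta)})]-[(\chi_\l,\rho,\Phi)]=[(\chi'^{(\alpha,\beta)}_\l,\rho'^{(\alpha,\beta)},\Phi'^{(\alpha,\beta)})]-[(\chi'_\l,\rho',\Phi')],$$
since each class on the left equals the corresponding class on the right. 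Hence the value $W(\alpha,\beta)$ computed with section $s$ agrees with that computed with section $s'$, which is exactly the asserted independence.

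The main obstacle I anticipate is the second step: confirming that the twist construction $(\alpha,\beta)$ genuinely transports the equivalence map $\tau$ to a valid equivalence map $\tau^{(\alpha,\beta)}=\alpha\circ\tau\circ\beta^{-1}$. This requires checking that the conformal $\lambda$-dependence and the $\partial$-action are respected under the substitution $x\mapsto\beta^{-1}(x)$, $h\mapsto\alpha^{-1}(h)$ in identities \eqref{equivalent1}--\eqref{equivalent3}, and in particular that the averaging-operator compatibility $R\tau-\tau P$ in \eqref{equivalent3} transforms correctly into $R\,\tau^{(\alpha,\beta)}-\tau^{(\alpha,\beta)}P$ using $\alpha Q=Q\alpha$ and $\beta P=P\beta$. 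Everything else is a bookkeeping argument at the level of cohomology classes.
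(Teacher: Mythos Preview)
Your first two steps coincide with the paper's argument: the paper likewise uses $\tau=s-s'$ to exhibit the equivalence of the untwisted cocycles, and then $\alpha\tau\beta^{-1}$ for the equivalence of the twisted ones. The divergence is in the third step. The paper does \emph{not} appeal to additivity of classes; it combines the two equivalences into a single explicit equivalence, namely the map $\alpha\tau\beta^{-1}-\tau$, which witnesses directly that the cocycle differences $(\chi_\l^{(\alpha,\beta)}-\chi_\l,\rho^{(\alpha,\beta)}-\rho,\Phi^{(\alpha,\beta)}-\Phi)$ and $(\chi'^{(\alpha,\beta)}_\l-\chi'_\l,\rho'^{(\alpha,\beta)}-\rho',\Phi'^{(\alpha,\beta)}-\Phi')$ are equivalent.

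Your ``additivity of cohomology classes'' is the gap. The non-abelian equivalence relation \eqref{equivalent1}--\eqref{equivalent3} is not linear: identity \eqref{equivalent2} contains the quadratic term $[\tau(x)_\l\tau(y)]_{\mathfrak{H}}$ and the right-hand side depends on $\rho'$ rather than on $\rho$. Consequently $H^2_{nab}(\mathfrak{L}_P,\mathfrak{H}_Q)$ is only a pointed set, not an abelian group, and the expression $[A]-[B]$ for classes $[A],[B]$ has no meaning. The Wells map is the class of the \emph{cocycle-level} difference $[(\chi_\l^{(\alpha,\beta)},\rho^{(\alpha,\beta)},\Phi^{(\alpha,\beta)})-(\chi_\l,\rho,\Phi)]$, and knowing $[A]=[A']$ and $[B]=[B']$ separately does not by itself give $[A-B]=[A'-B']$ in this setting. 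To close the argument you must do exactly what the paper does: produce one map (here $\alpha\tau\beta^{-1}-\tau$) that realizes the equivalence of the two difference cocycles, and verify \eqref{equivalent1}--\eqref{equivalent3} for that map. Your verifications in step two already contain the ingredients; the missing piece is assembling them into this single equivalence rather than splitting into a subtraction of classes.
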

\begin{proof} Let $ s'$ be any other section of the map $p$, and let $(\chi_\l', \rho', \Phi')$ be the corresponding non-abelian $2$-cocycle. We have seen that the non-abelian $2$-cocycles $(\chi_\l, \rho, \Phi)$ and $(\chi_\l', \rho', \Phi')$ are equivalent by the map $\tau := s - s'$. Using this, it is easy to verify that the non-abelian $2$-cocycles $(\chi_\l^{(\alpha, \beta)}, \rho^{(\alpha, \beta)}, \Phi^{(\alpha, \beta)})$ and $(\chi_\l'^{(\alpha, \beta)}, \rho'^{(\alpha, \beta)}, \Phi'^{(\alpha, \beta)})$ are equivalent by the map $\alpha \tau \beta^{-1}$.Combining these results, we observe that the $2$-cocycles $(\chi_\l^{(\alpha, \beta)}, \rho^{(\alpha, \beta)}, \Phi^{(\alpha, \beta)}) - (\chi_\l, \rho, \Phi)$ and $(\chi_\l'^{(\alpha, \beta)}, \rho'^{(\alpha, \beta)}, \Phi'^{(\alpha, \beta)}) - (\chi_\l', \rho', \Phi')$ are equivalent by the map $\alpha \tau \beta^{-1} - \tau$. Therefore, their corresponding equivalence classes in $H^2_{\text{nab}}({\mathfrak{L}}_P, {\mathfrak{L}}_Q)$ are same. In other words, the map $W$ does not depend on the chosen section.
\end{proof}
\begin{thm}\label{thm6.4}
    Let $\mathfrak{L}_P$ and $\mathfrak{H}_Q$ be two averaging Lie conformal algebras and \begin{align}
    0 \longrightarrow \mathfrak{H}_Q \xrightarrow{i} \mathfrak{E}_R \xrightarrow{p} \mathfrak{L}_P \longrightarrow 0
\end{align} be a non-abelian extension of $\mathfrak{L}_P$ by $\mathfrak{H}_Q$ with the section $s$. The corresponding non-abelian $2$-cocycles of $\mathfrak{E}_R$ is denoted by $(\chi_\l,\rho,\Phi)$. A pair $(\alpha,\beta) \in  Aut(\mathfrak{H}_Q) \times Aut(\mathfrak{L}_P)$ is inducible if and
only if the non-abelian $2$-cocycles $(\chi_\l,\rho,\Phi)$ and $(\chi_\l^{(\alpha,\beta)}, \rho^{(\alpha,\beta)}, \Phi^{(\alpha,\beta)})$ are equivalent. In other words Wells map of a pair of automorphisms $(\a,\b)$ is zero.
\end{thm}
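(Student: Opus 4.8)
The plan is to prove the biconditional directly, in two directions, after recording one observation: by the very definition of the Wells map, the assertion ``$W(\alpha,\beta)=0$'' means precisely that $(\chi_\l,\rho,\Phi)$ and $(\chi_\l^{(\alpha,\beta)},\rho^{(\alpha,\beta)},\Phi^{(\alpha,\beta)})$ represent the same class in $H^2_{nab}(\mathfrak{L}_P,\mathfrak{H}_Q)$, i.e.\ are equivalent via some $\tau:\mathfrak{L}\to\mathfrak{H}$ satisfying Eqs.~\eqref{equivalent1}, \eqref{equivalent2} and \eqref{equivalent3} (with the two cocycles in the roles occupied there by the $s$- and $s'$-induced ones). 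Hence it suffices to show that $(\alpha,\beta)$ is inducible if and only if such a $\tau$ exists.

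For necessity, I would assume $(\alpha,\beta)$ inducible, so there is $\gamma\in Aut_\mathfrak{H}(\mathfrak{E}_R)$ with $\gamma|_\mathfrak{H}=\alpha$ and $\bar\gamma=\beta$, and define the candidate witness $\tau(x):=\gamma s\beta^{-1}(x)-s(x)$. Since $p\gamma s=\bar\gamma=\beta$ gives $p(\gamma s\beta^{-1}(x))=x=p(s(x))$, we have $\tau(x)\in\ker p=\mathfrak{H}$, so $\tau:\mathfrak{L}\to\mathfrak{H}$ is a legitimate $\mathbb{C}[\p]$-module map. Substituting $\gamma s\beta^{-1}(x)=s(x)+\tau(x)$ and using that $\gamma$ is a conformal morphism with $\gamma|_\mathfrak{H}=\alpha$, together with $\rho(x)_\l h=[s(x)_\l h]_\mathfrak{E}$, $\chi_\l(x,y)=[s(x)_\l s(y)]_\mathfrak{E}-s[x_\l y]_\mathfrak{L}$ and conformal skew-symmetry, I would expand $\gamma[s\beta^{-1}(x)_\l s\beta^{-1}(y)]_\mathfrak{E}=[\gamma s\beta^{-1}(x)_\l \gamma s\beta^{-1}(y)]_\mathfrak{E}$ and match components to recover \eqref{equivalent1} and \eqref{equivalent2}. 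Equation~\eqref{equivalent3} comes from the averaging part: since $\gamma R=R\gamma$, $\beta P=P\beta$ and $\alpha Q=Q\alpha$, a short computation gives $R\tau(x)-\tau P(x)=\gamma(Rs-sP)\beta^{-1}(x)-(Rs-sP)(x)=\alpha\Phi(\beta^{-1}(x))-\Phi(x)=\Phi^{(\alpha,\beta)}(x)-\Phi(x)$, which is exactly \eqref{equivalent3}.

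For sufficiency, I would assume the two cocycles are equivalent through some $\tau:\mathfrak{L}\to\mathfrak{H}$ satisfying \eqref{equivalent1}--\eqref{equivalent3}. Using the $\mathbb{C}[\p]$-module identification $\mathfrak{E}\cong\mathfrak{L}\oplus\mathfrak{H}$ afforded by $s$, the bracket $[(x,h)_\l(y,k)]_\mathfrak{E}=([x_\l y]_\mathfrak{L},\rho(x)_\l k-\rho(y)_{-\p-\l}h+\chi_\l(x,y)+[h_\l k]_\mathfrak{H})$ and the averaging operator $R(x,h)=(P(x),Q(h)+\Phi(x))$, I define $\gamma:\mathfrak{E}\to\mathfrak{E}$ by $\gamma(x,h):=(\beta(x),\alpha(h)+\tau(\beta(x)))$. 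This $\gamma$ is $\mathbb{C}[\p]$-linear and bijective, with inverse sending $(u,v)$ to $(\beta^{-1}(u),\alpha^{-1}(v-\tau(u)))$; it restricts to $\alpha$ on $\mathfrak{H}=\{(0,h)\}$, and $p\gamma s(x)=\beta(x)$ forces $\bar\gamma=\beta$. Thus $\gamma$ induces $(\alpha,\beta)$ once it is shown to lie in $Aut_\mathfrak{H}(\mathfrak{E}_R)$. Compatibility with $R$ reduces, after cancelling $\alpha Q=Q\alpha$ and using $P\beta=\beta P$, to $\alpha\Phi(x)+\tau(\beta P(x))=Q\tau(\beta(x))+\Phi(\beta(x))$, which is precisely \eqref{equivalent3} evaluated at $\beta(x)$.

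The main obstacle is verifying that this $\gamma$ preserves the conformal bracket, that is $\gamma[(x,h)_\l(y,k)]_\mathfrak{E}=[\gamma(x,h)_\l\gamma(y,k)]_\mathfrak{E}$. Expanding both sides with the bracket formula above, the $\mathfrak{L}$-components agree because $\beta$ is a morphism of $\mathfrak{L}_P$, while the $\mathfrak{H}$-components require combining \eqref{equivalent1} (to convert the $\rho$-terms carrying $\tau(\beta(\cdot))$ into brackets inside $\mathfrak{H}$) with \eqref{equivalent2} (to absorb the $\chi$-discrepancy), all the while using that $\alpha$ is a morphism of $\mathfrak{H}_Q$ and $\beta$ of $\mathfrak{L}_P$. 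This is the longest computation, but it is forced term by term by the equivalence relations; once it is complete, together with the $R$-compatibility above we obtain $\gamma\in Aut_\mathfrak{H}(\mathfrak{E}_R)$, so $(\alpha,\beta)=(\gamma|_\mathfrak{H},\bar\gamma)$ is inducible. Combining the two directions closes the equivalence and, by the opening observation, proves that inducibility is equivalent to the triviality of the Wells map.
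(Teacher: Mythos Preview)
Your proposal is correct and follows the standard two-direction argument for this type of result. The paper itself does not write out a proof at all: it simply refers the reader to Theorem~5.5 of \cite{DS}, so your explicit construction of $\tau(x)=\gamma s\beta^{-1}(x)-s(x)$ in the forward direction and of $\gamma(x,h)=(\beta(x),\alpha(h)+\tau(\beta(x)))$ in the converse is precisely the argument the citation is meant to stand in for, adapted to the averaging Lie conformal setting.
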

\begin{proof}For the proof of this theorem, we refer readers  to Theorem 5.5 of \cite{DS}.\end{proof}

\par In the Lie conformal algebra context,  we can obtain  Wells exact sequence of Lie conformal algebras. Let's generalize the exact sequence of Wells maps in the context of averaging Lie conformal algebras. For any non-abelian extension of averaging Lie conformal algebras $0 \longrightarrow \mathfrak{H}_Q \xrightarrow{i} \mathfrak{E}_R \xrightarrow{p} \mathfrak{L}_P \longrightarrow 0$. Define a subgroup $Aut^{\mathfrak{L},\mathfrak{H}}_{\mathfrak{H}}(\mathfrak{E}_R) \subset Aut_{\mathfrak{H}}(\mathfrak{E}_R)$ by
$Aut^{\mathfrak{L},\mathfrak{H}}_{\mathfrak{H}}(\mathfrak{E}_R):= \{\gamma\in  Aut_{\mathfrak{H}}(\mathfrak{E}_R) \mid \Pi(\gamma) = (Id_\mathfrak{H},Id_\mathfrak{L})\}.$

\begin{thm}\label{thm6.5} Let  $0 \longrightarrow \mathfrak{H}_Q \xrightarrow{i} \mathfrak{E}_R \xrightarrow{p} \mathfrak{L}_P \longrightarrow 0$ be a non-abelian extension of averaging Lie conformal algebras. Then there is an exact sequence
$1 \longrightarrow Aut^{\mathfrak{L},\mathfrak{H}}_{\mathfrak{H}}(\mathfrak{E}_R)\xrightarrow{i} Aut_{\mathfrak{H}}(\mathfrak{E}_R) \xrightarrow{\Pi} Aut(\mathfrak{H}_Q) \times Aut(\mathfrak{L}_P)\xrightarrow{W} H^2_{nab}(\mathfrak{L}_P,\mathfrak{H}_Q).$
\end{thm}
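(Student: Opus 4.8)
The plan is to establish exactness at each of the three interior terms, regarding the sequence as one of groups and group homomorphisms at the arrows $i$ and $\Pi$, and as a sequence of pointed sets at the final arrow $W$, whose basepoint is the trivial class $[0]\in H^2_{nab}(\mathfrak{L}_P,\mathfrak{H}_Q)$. Concretely I must verify three things: (a) $i$ is injective; (b) $\mathrm{Im}(i)=\ker(\Pi)$; and (c) $\mathrm{Im}(\Pi)=W^{-1}([0])$. I would state at the outset that the preceding proposition (section-independence of $W$) guarantees that $W$ descends to a well-defined map on $Aut(\mathfrak{H}_Q)\times Aut(\mathfrak{L}_P)$ valued in cohomology classes, so that the pointed-set exactness at the rightmost term is meaningful.

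For (a) and (b), the arguments are essentially definitional. The map $i$ is nothing but the inclusion of the subgroup $Aut^{\mathfrak{L},\mathfrak{H}}_{\mathfrak{H}}(\mathfrak{E}_R)$ into $Aut_{\mathfrak{H}}(\mathfrak{E}_R)$, hence injective, which gives exactness at the leftmost term. For (b), recall that $\Pi$ is the group homomorphism $\gamma\mapsto(\gamma|_{\mathfrak{H}},\bar{\gamma})$, the homomorphism property being the identity $\overline{\gamma_1\gamma_2}=\bar{\gamma}_1\,\bar{\gamma}_2$ recorded just before the statement. Since the subgroup was defined precisely as $Aut^{\mathfrak{L},\mathfrak{H}}_{\mathfrak{H}}(\mathfrak{E}_R)=\{\gamma\in Aut_{\mathfrak{H}}(\mathfrak{E}_R)\mid\Pi(\gamma)=(Id_{\mathfrak{H}},Id_{\mathfrak{L}})\}$, we have $\ker\Pi=Aut^{\mathfrak{L},\mathfrak{H}}_{\mathfrak{H}}(\mathfrak{E}_R)=\mathrm{Im}(i)$, which is exactness at $Aut_{\mathfrak{H}}(\mathfrak{E}_R)$.

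The essential step is (c). By definition a pair $(\a,\b)\in Aut(\mathfrak{H}_Q)\times Aut(\mathfrak{L}_P)$ lies in $\mathrm{Im}(\Pi)$ exactly when it is inducible. On the other hand, Theorem \ref{thm6.4} asserts that $(\a,\b)$ is inducible if and only if the non-abelian $2$-cocycles $(\chi_\l,\rho,\Phi)$ and $(\chi_\l^{(\a,\b)},\rho^{(\a,\b)},\Phi^{(\a,\b)})$ are equivalent, i.e. if and only if $W((\a,\b))=[0]$. Chaining these two equivalences yields $\mathrm{Im}(\Pi)=W^{-1}([0])$, which is exactness at $Aut(\mathfrak{H}_Q)\times Aut(\mathfrak{L}_P)$ and completes the proof.

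The main obstacle is concentrated entirely in step (c), and it is resolved by invoking Theorem \ref{thm6.4}: once inducibility is characterized by the vanishing of the Wells map, the remaining assertions collapse to the definition of $Aut^{\mathfrak{L},\mathfrak{H}}_{\mathfrak{H}}(\mathfrak{E}_R)$ as $\ker\Pi$ together with the observation that $i$ is an inclusion. The only subsidiary care needed is to cite the section-independence of $W$ so that $W^{-1}([0])$ is unambiguously defined; with that in hand the three exactness verifications are routine bookkeeping about images and kernels.
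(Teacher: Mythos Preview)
Your proposal is correct and follows exactly the expected route: exactness at the first two terms is definitional (inclusion and $\ker\Pi$ by definition of $Aut^{\mathfrak{L},\mathfrak{H}}_{\mathfrak{H}}(\mathfrak{E}_R)$), and exactness at the third term is precisely the content of Theorem \ref{thm6.4}. The paper itself gives no detailed argument, merely citing Theorem 5.6 of \cite{DS}; your write-up is the standard unpacking of that reference.
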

\begin{proof} The proof of this theorem follows the same principle as the Theorem 5.6 of \cite{DS}.
\end{proof}


\begin{thebibliography}{99}
\bibitem{AWMB}Asif, S., Wang, Y., Mosbahi, B. and Basdouri, I., 2023. Cohomology and deformation theory of $\mathcal {O} $-operators on Hom-Lie conformal algebras. arXiv preprint arXiv:2312.04121.
\bibitem{AWW1}Asif, S., Wang, Y. and Wu, Z., 2024. RB-operator and Nijenhuis operator on Hom-associative conformal algebra. Journal of Algebra and its Applications, 23(11), p.2450175. 
\bibitem{AWY}Asif, S., Wang, Y. and Yuan, L., 2024. Nijenhuis-operator on Hom-Lie conformal algebras. Topology and its Applications, 344, p.108817.
     
\bibitem{B}Brown, K.S., 2012. Cohomology of groups (Vol. 87). Springer Science \& Business Media.
\bibitem{BH} Bonezzi, R. and Hohm, O., 2020. Leibniz gauge theories and infinity structures. Communications in Mathematical Physics, 377(3), pp.2027-2077.
\bibitem{BC}Baez, J.C. and Lauda, A.D., 2004. Higher-dimensional algebra. V: 2-Groups. Theory and Applications of Categories [electronic only], 12, pp.423-491.
 \bibitem{C} Cao, W., 2000. An algebraic study of averaging operators. Rutgers The State University of New Jersey, Graduate School-Newark.
\bibitem{Cones}Connes, A., 1978. On the cohomology of operator algebras. J. Funct. Anal, 28(2), pp.248-253.
\bibitem{D2022}Das, A., 2022. Cohomology and deformations of weighted Rota-Baxter operators. Journal of Mathematical Physics, 63(9).
\bibitem{D}Das, A., 2023. Controlling structures, deformations and homotopy theory for averaging algebras. arXiv e-prints, pp.arXiv-2303.
\bibitem{DS}Das, A. and Sahoo, A., 2023. Homotopification and categorification of Leibniz conformal algebras. arXiv preprint arXiv:2309.02116.
\bibitem{ADSS} Das, A. and Sen, S., 2024. 2-term averaging $\mathfrak{L}_\infty$-algebras and non-abelian extensions of averaging Lie algebras. Journal of Algebra, 644, pp.126-151.
\bibitem{F} Y. Fr\'egier, Non-abelian cohomology of extensions of Lie algebras as Deligne groupoid, J. Algebra 398 (2014), 243-257.
\bibitem{FP}Friedlander, E.M. and Parshall, B.J., 1986. Cohomology of Lie algebras and algebraic groups. American Journal of Mathematics, 108(1), pp.235-253.
\bibitem{GZ} Gao, X. and Zhang, T., 2018. Averaging algebras, rewriting systems and Gröbner–Shirshov bases. Journal of Algebra and Its Applications, 17(07), p.1850130.
\bibitem{HB}Hong, Y. and Bai, C., 2021. On antisymmetric infinitesimal conformal bialgebras. Journal of Algebra, 586, pp.325-356.
 \bibitem{HM}Hochschild, G. and Mostow, G.D., 1962. Cohomology of Lie groups. Illinois Journal of Mathematics, 6(3), pp.367-401.
\bibitem{HS} G. Hochschild and J.-P. Serre, Cohomology of group extensions, Trans. Amer. Math. Soc. 74 (1953), 110-134.
\bibitem{kac}Kac, V.G., 1998. Vertex algebras for beginners (No. 10). American Mathematical Soc..
\bibitem{KP}Kolesnikov, P.S., 2006. Associative conformal algebras with finite faithful representation. Advances in Mathematics, 202(2), pp.602-637.
\bibitem{K}Kelley, J.L., 1958. Averaging operators on $ C_\infty (X) $. Illinois Journal of Mathematics, 2(2), pp.214-223.
\bibitem{KoS}Kotov, A. and Strobl, T., 2020. The embedding tensor, Leibniz–Loday algebras, and their higher gauge theories. Communications in Mathematical Physics, 376(1), pp.235-258. 
\bibitem{KS}Kraft, A. and Schnitzer, J., 2024. An introduction to $\mathfrak{L}_\infty$-algebras and their homotopy theory for the working mathematician. Reviews in Mathematical Physics, 36(1), pp.2330006-170.

\bibitem{LM}Lada, T. and Markl, M., 1995. Strongly homotopy Lie algebras. Communications in algebra, 23(6), pp.2147-2161.
 \bibitem{LS}Lada, T. and Stasheff, J., 1993. Introduction to SH Lie algebras for physicists. International Journal of Theoretical Physics, 32, pp.1087-1103.
\bibitem{L}Liberati, J.I., 2008. On conformal bialgebras. Journal of Algebra, 319(6), pp.2295-2318.
\bibitem{MDH}Mishra, S.K., Das, A. and Hazra, S.K., 2023. Non-abelian extensions of Rota-Baxter Lie algebras and inducibility of automorphisms. Linear Algebra and its Applications, 669, pp.147-174.
 \bibitem{bM}Miller, J.B., 1966. Averaging and Reynolds operators on Banach algebras I. Representation by derivations and antiderivations. Journal of Mathematical Analysis and Applications, 14(3), pp.527-548.
\bibitem{cM} Moy, S.T.C., 1954. Characterizations of conditional expectation as a transformation on function spaces.
\bibitem{P}Kolesnikov, P., 2015. Homogeneous averaging operators on simple finite conformal Lie algebras. Journal of Mathematical Physics, 56(7).
\bibitem{EM}MacLane, S., 1949. Cohomology theory in abstract groups. III. Annals of Mathematics, 50(3), pp.736-761. 
\bibitem{PSY} Passi, I.B.S., Singh, M. and Yadav, M.K., 2010. Automorphisms of abelian group extensions. Journal of Algebra, 324(4), pp.820-830.
\bibitem{PG}Pei, J. and Guo, L., 2015. Averaging algebras, Schröder numbers, rooted trees and operads. Journal of Algebraic Combinatorics, 42, pp.73-109.
\bibitem{R}Reynolds, O., 1895. IV. On the dynamical theory of incompressible viscous fluids and the determination of the criterion. Philosophical transactions of the royal society of london.(a.), (186), pp.123-164.
\bibitem{SD}Sahoo, A. and Das, A., 2023. Homotopy conformal algebras. arXiv e-prints, pp.arXiv-2308.
\bibitem{STZ}Sheng, Y., Tang, R. and Zhu, C., 2021. The controlling $L_\infty$-algebra, cohomology and homotopy of embedding tensors and Lie–Leibniz triples. Communications in Mathematical Physics, 386(1), pp.269-304.
\bibitem{SK}De Sole, A. and Kac, V.G., 2009. Lie conformal algebra cohomology and the variational complex. Communications in Mathematical Physics, 292(3), pp.667-719.
\bibitem{S}Stasheff, J.D., 1963. Homotopy associativity of H-spaces. II. Transactions of the American Mathematical Society, 108(2), pp.293-312.Mathematical Society, 108(2), pp.293-312.
\bibitem{WZ}Wang, K. and Zhou, G., Cohomology theory of averaging algebras, $L_\infty$-structures and homotopy averaging algebras. arXiv preprint arXiv:2009.11618, 2.
\bibitem{Wells}Wells, C., 1971. Automorphisms of group extensions. Transactions of the American Mathematical Society, pp.189-194.
 \bibitem{Y}Yuan, L., 2022. O-operators and Nijenhuis operators of associative conformal algebras. Journal of Algebra, 609, pp.245-291.
 \bibitem{YL} Yuan, L. and Liu, J., 2023. Twisting theory, relative Rota-Baxter type operators and $\mathfrak{L}_\infty$-algebras on Lie conformal algebras. Journal of Algebra, 636, pp.88-122.
\bibitem{ZYC}Zhao, J., Yuan, L. and Chen, L., 2018. Deformations and generalized derivations of Hom-Lie conformal algebras. Science China Mathematics, 61, pp.797-812.
\end{thebibliography}
\end{document}